\newcommand{\noun}[1]{\textsc{#1}}
\numberwithin{equation}{section}
\numberwithin{figure}{section}
\theoremstyle{plain}
\newtheorem{thm}{\protect\theoremname}[section]
  \theoremstyle{definition}
  \newtheorem{defn}[thm]{\protect\definitionname}
  \theoremstyle{plain}
  \newtheorem{fact}[thm]{\protect\factname}
  \theoremstyle{plain}
  \newtheorem{lem}[thm]{\protect\lemmaname}
  \theoremstyle{plain}
  \newtheorem{prop}[thm]{\protect\propositionname}
  \theoremstyle{remark}
  \newtheorem{claim}[thm]{\protect\claimname}
  \providecommand{\claimname}{Claim}
  \providecommand{\definitionname}{Definition}
  \providecommand{\factname}{Fact}
  \providecommand{\lemmaname}{Lemma}
  \providecommand{\propositionname}{Proposition}
\providecommand{\theoremname}{Theorem}
\begin{document}

\title{Komlós's tiling theorem via graphon covers}

\author{Jan Hladký}

\address{\emph{Current address: }Institute for geometry, TU Dresden, 01062
Dresden, Germany.\emph{ This work has been carried out while at: }Institute
of Mathematics, Czech Academy of Science. Žitná 25, 110~00, Praha,
Czech Republic. The Institute of Mathematics of the Czech Academy
of Sciences is supported by RVO:67985840.}

\thanks{\emph{Hladký:} The research leading to these results has received
funding from the People Programme (Marie Curie Actions) of the European
Union's Seventh Framework Programme (FP7/2007-2013) under REA grant
agreement number 628974.}

\email{honzahladky@gmail.com}

\author{Ping Hu}

\address{\emph{Current address:} School of Mathematics, Sun Yat-sen University,
Guangzhou, 510275, China. \emph{This work has been carried out while
at:} Department of Computer Science, University of Warwick, Coventry,
CV4 7AL, United Kingdom.}

\thanks{\emph{Hu:} This work has received funding from the Leverhulme Trust
2014 Philip Leverhulme Prize of Daniel Kral and from the European
Research Council (ERC) under the European Union\textquoteright s Horizon
2020 research and innovation programme (grant agreement No 648509).}

\email{huping9@mail.sysu.edu.cn}

\author{Diana Piguet}

\address{The Czech Academy of Sciences, Institute of Computer Science, Pod
Vodárenskou v\v{e}ží 2, 182~07 Prague, Czech Republic. With institutional
support RVO:67985807.}

\thanks{\emph{Piguet} was supported by the Czech Science Foundation, grant
number GJ16-07822Y}

\email{piguet@cs.cas.cz}
\begin{abstract}
Komlós {[}Komlós: Tiling Turán Theorems, Combinatorica, 2000{]} determined
the asymptotically optimal minimum-degree condition for covering a
given proportion of vertices of a host graph by vertex-disjoint copies
of a fixed graph $H$, thus essentially extending the Hajnal\textendash Szemerédi
theorem which deals with the case when $H$ is a clique. We give a
proof of a graphon version of Komlós's theorem. To prove this graphon
version, and also to deduce from it the original statement about finite
graphs, we use the machinery introduced in {[}Hladký, Hu, Piguet:
Tilings in graphons, arXiv:1606.03113{]}. We further prove a stability
version of Komlós's theorem.
\end{abstract}

\maketitle
\global\long\def\JUSTIFY#1{\mbox{\fbox{\tiny#1}}\quad}

\global\long\def\SUPPORT{\mathrm{supp\:}}

\global\long\def\SUPPORTPOSITIVE{\mathrm{supp}}

\global\long\def\ESSINF{\mathrm{essinf\:}}
\global\long\def\ESSSUP{\mathrm{esssup\:}}

\global\long\def\TIL{\mathsf{til}}

\global\long\def\FTIL{\mathsf{ftil}}

\global\long\def\FCOV{\mathsf{fcov}}

\global\long\def\DIST{\mathrm{dist}}

\global\long\def\EXP{\mathbf{E}}

\global\long\def\WEAKCONV{\overset{\mathrm{w}^{*}}{\;\longrightarrow\;}}

\global\long\def\CUTNORMCONV{\overset{\|\cdot\|_{\square}}{\;\longrightarrow\;}}

\section{Introduction\label{sec:Intro}}

Questions regarding the number of vertex-disjoint copies of a fixed
graph $H$ that can be found in a given graph $G$ are an important
part in extremal graph theory. The corresponding quantity, i.e., the
maximum number of vertex-disjoint copies of $H$ in $G$, is denoted
$\TIL(H,G)$, and called the \emph{tiling number of }$H$\emph{ in
$G$}. The by far most important case is when $H=K_{2}$ because then
$\TIL(H,G)$ is the matching number of $G$. For example, a classical
theorem of Erd\H{o}s\textendash Gallai~\cite{Erdos1959} gives an
optimal lower bound on the matching ratio of a graph in terms of its
edge density.

Recall that the theory of dense graph limits (initiated in~\cite{Lovasz2006,Borgs2008c})
and the related theory of flag algebras (introduced in~\cite{Razborov2007})
have led to breakthroughs on a number of long-standing problems that
concern relating subgraph densities. It is natural to attempt to broaden
the toolbox available in the graph limits world to be able to address
extremal problems that involve other parameters than subgraph densities.
In~\cite{HlHuPi:TilingsInGraphons} we worked out such a set of tools
for working with tiling numbers. In this paper we use this theory
to prove a strengthened version of a tiling theorem of Komlós,~\cite{Komlos2000}.

\subsection{Komlós's Theorem}

Suppose that $H$ is a fixed graph with chromatic number $r$. We
want to find a minimum degree threshold that guarantees a prescribed
lower bound on $\TIL(H,G)$ for a given (large) $n$-vertex graph
$G$. Consider first the special case $H=K_{r}$. Then one end of
the range for the problem is covered by Turán's Theorem: if $\delta(G)>\nicefrac{(r-2)n}{r-1}$
then $\TIL(H,G)\ge1$. The other end is covered by the Hajnal\textendash Szemerédi
Theorem, \cite{Hajnal1970}: if $\delta(G)\ge\left\lfloor \nicefrac{(r-1)n}{r}\right\rfloor $
then $\TIL(H,G)=\left\lfloor \nicefrac{n}{r}\right\rfloor $ (which
is the maximum possible value for $\TIL(H,G)$).\noun{ }If $\delta(G)=m<\left\lfloor \nicefrac{(r-1)n}{r}\right\rfloor $,
then we apply Hajnal-Szemerédi Theorem to the complement of $G$ to
get an equitable coloring with $n-m+1$ colors, such that the size
of color classes are $r$or $r-1$. And therefore we get $\TIL(H,G)=n-(r-1)(n-m+1)$.

When $H$ is a general $r$-chromatic graph, the asymptotically optimal
minimum degree condition $\delta(G)\ge(1+o_{n}(1))\nicefrac{(r-1)n}{r}$
for the property $\TIL(H,G)\ge1$ is given by the Erd\H{o}s\textendash Stone
Theorem (see Section~\ref{subsec:ErdStoSim}). Komlós's Theorem then
determines the optimal threshold for greater values of $\TIL(H,G)$.
To this end we need to introduce the critical chromatic number\emph{.}
\begin{defn}
Suppose that $H$ is a graph of order $h$ whose chromatic number
is $r$. We write $\ell$ for the order of the smallest possible color
class in any $r$-coloring of $H$. The \emph{critical chromatic number
}of $H$ is then defined as
\begin{equation}
\chi_{\mathrm{cr}}(H)=\frac{(r-1)h}{h-\ell}\;.\label{def:chiCR}
\end{equation}
\end{defn}

Observe that 
\begin{equation}
\chi_{\mathrm{cr}}(H)\in\left(\chi(H)-1,\chi(H)\right]\;.\label{eq:chicrVSchi}
\end{equation}
We can now state Komlós's Theorem.
\begin{thm}[\cite{Komlos2000}]
\label{thm:KomlosOriginal}Let $H$ be an arbitrary graph, and $x\in[0,1]$.
Then for every $\epsilon>0$ there exists a number $n_{0}$ such that
the following holds. Suppose that $G$ is a graph of order $n>n_{0}$
with minimum degree at least 
\begin{equation}
\left(x\left(1-\frac{1}{\chi_{\mathrm{cr}}(H)}\right)+\left(1-x\right)\left(1-\frac{1}{\chi(H)-1}\right)\right)n\;.\label{eq:KomlosOrigMinDeg}
\end{equation}
Then $\TIL(H,G)\ge\frac{\left(x-\epsilon\right)n}{v(H)}$.
\end{thm}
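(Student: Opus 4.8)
The plan is to prove a graphon version of the theorem and then to transfer it to finite graphs using the machinery of~\cite{HlHuPi:TilingsInGraphons}. The graphon statement I would aim for reads: if $W$ is a graphon such that $\int_{0}^{1}W(z,y)\,dy\ge c$ for almost every $z$, where $c:=x(1-\tfrac{1}{\chi_{\mathrm{cr}}(H)})+(1-x)(1-\tfrac{1}{\chi(H)-1})$, then $W$ carries a fractional $H$-tiling covering a set of measure at least $x$. Granting this, Theorem~\ref{thm:KomlosOriginal} follows by a compactness argument of the kind developed in~\cite{HlHuPi:TilingsInGraphons}: a hypothetical sequence of counterexamples $G_{n}$---minimum degree as in~\eqref{eq:KomlosOrigMinDeg} but $\TIL(H,G_{n})<(x-\epsilon)|V(G_{n})|/v(H)$---has a subsequence converging to a graphon $W$; since the degree sequence is continuous under cut distance, $W$ has degree at least $c$ almost everywhere, while the transference results of~\cite{HlHuPi:TilingsInGraphons}, which relate $\TIL(H,G_{n})$ along a convergent sequence to a fractional tiling parameter of the limit, force that parameter to be at most $x-\epsilon<x$---contradicting the graphon statement.

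For the graphon statement itself, I would first observe that it suffices to treat $H=K_{r}(\ell_{1},\dots,\ell_{r})$, the complete $r$-partite graph whose part sizes $\ell_{1}\le\dots\le\ell_{r}$ are the colour-class sizes of some optimal colouring of $H$ with smallest class $\ell=\ell_{1}$: indeed $H$ is a spanning subgraph of this graph, so any fractional $K_{r}(\ell_{1},\dots,\ell_{r})$-tiling of $W$ yields a fractional $H$-tiling of the same measure, and $h$, $\chi$, $\chi_{\mathrm{cr}}$---hence the threshold $c$---are unchanged. So the task becomes: in a graphon of essential minimum degree $\ge c$, find vertex-disjoint ``$r$-books'', that is, $r$-tuples of positive-measure sets $(B_{1}^{(j)},\dots,B_{r}^{(j)})$ on which $W$ is bounded away from $0$ off the diagonal and whose measures are in the proportions $\ell_{1}:\dots:\ell_{r}$, with total measure $\ge x$; a graphon blow-up/counting argument, of the type supplied by~\cite{HlHuPi:TilingsInGraphons}, then fills these books with copies and assembles the fractional tiling. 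The route I would attempt is the graphon analogue of the auxiliary-vertex trick used for cliques in the introduction: adjoin to $W$ a universal block $U$ of relative measure $t$ and renormalise; the value making the enlarged graphon $W'$ have essential minimum degree exactly $1-\tfrac{1}{\chi_{\mathrm{cr}}(H)}$ is, after a short computation, $t=(1-x)\ell/(h-\ell)$. Applying the $x=1$ case of the graphon statement (an almost-perfect $H$-tiling below the $\chi_{\mathrm{cr}}$-threshold, which one would establish first within the same framework) to $W'$ gives a fractional $H$-tiling covering almost all of $W'$, and removing the copies meeting $U$ and rescaling should leave the required tiling of $W$.

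The step I expect to be the real obstacle is exactly this removal. Since $H$ is imbalanced ($\ell\ge2$ in the nontrivial cases), a copy of $H$ can meet $U$ in as little as one vertex, and a direct count shows that simply discarding every copy that touches $U$ recovers measure $\ge x$ only when $\ell=1$. One must instead ensure that the near-perfect tiling of $W'$ uses $U$ \emph{economically}---morally, that few copies touch $U$ and each such copy uses $U$ for essentially a whole colour class---which is what is forced in the extremal configuration (a complete-multipartite-type graphon with a universal block of relative measure about $(1-x)\ell/(h-\ell)$ glued to a partial structure), but which must be imposed for an arbitrary $W'$. This is where the bookkeeping is delicate and where the fact that~\eqref{eq:KomlosOrigMinDeg} is a genuine linear interpolation, rather than a rescaling of one endpoint, has to be matched exactly. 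I would handle it either by proving a ``$U$-economical'' refinement of the $x=1$ statement, or by a local exchange argument that redistributes the offending copies, or---probably cleanest, and closest in spirit to~\cite{Komlos2000}---by recasting the search for the books directly as a fractional optimisation over $W$ whose optimum is pinned to $x$ via a defect form of Tur\'an's theorem fed by the degree condition.
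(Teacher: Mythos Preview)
Your transference step---pass to a limit graphon $W$, inherit the minimum-degree condition, and use the results of~\cite{HlHuPi:TilingsInGraphons} to pull a large fractional tiling in $W$ back to large $\TIL(H,G_n)$---is correct and matches the paper exactly. The divergence is in how you attack the graphon statement.

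The paper does \emph{not} construct a tiling. Instead it works entirely on the dual side: by the LP-duality theorem of~\cite{HlHuPi:TilingsInGraphons}, $\TIL(H,W)=\FCOV(H,W)$, so it suffices to show that every fractional $H$-cover $\mathfrak{c}:\Omega\to[0,1]$ has $\int\mathfrak{c}\ge x/h$. This is done by an iterative ``find a vertex with near-minimal $\mathfrak{c}$-value and restrict to its neighbourhood'' argument: one builds nested sets $A_1\supset\cdots\supset A_r$ with $\nu(A_{i+1})\gtrsim\nu(A_i)-(1-\delta)$ and records $\alpha_i=\mathrm{essinf}\,\mathfrak{c}_{\restriction A_i}$; a separability trick produces sets $F_i\subset A_i$ on which the slices $W(\cdot,y)$ cluster, and these $F_i$ support a positive-density copy of $K_r$, forcing $\sum_i\ell_i\alpha_i\ge 1$. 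A short linear-programming-flavoured computation then converts the two constraints (the nesting bound on $\nu(A_i)$ and the cover inequality on the $\alpha_i$) into $\int\mathfrak{c}\ge x/h$. No auxiliary block, no book-building, no removal step.

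Your primal approach via the universal block is genuinely different, and the obstacle you flag is a real gap, not a technicality. The auxiliary-vertex trick \emph{is} tight for cliques precisely because every copy of $K_r$ meeting $U$ uses exactly one vertex of $U$ in the extremal situation; for a general $H$ with $\ell\ge 2$ the worst case lets copies meet $U$ in a single vertex, and the direct count then recovers strictly less than measure $x$. Your proposed fixes (``$U$-economical'' tilings, local exchanges, or an unspecified fractional optimisation) are not arguments yet, and making any of them work would amount to reproving the imbalanced part of Koml\'os's theorem from scratch. The paper's move to covers sidesteps this entirely: a cover is a one-variable function, the minimum-degree hypothesis interacts with it through simple measure inequalities, and the interpolation in~\eqref{eq:KomlosOrigMinDeg} falls out of the final arithmetic rather than having to be engineered by hand. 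If you want to complete your route, the cleanest fix is to abandon the tiling side and argue on covers as above; your last parenthetical suggestion is in fact pointing in this direction, but the ``defect Tur\'an'' phrasing obscures that the right object is the cover and the right tool is duality.
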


This result is tight (up to the error term $\frac{\epsilon n}{v(H)}$)
as shown by an $\chi(H)$-partite $n$-vertex graph whose $\chi(H)-1$
colour classes are of size $\nicefrac{n\cdot\left(\chi(H)-x\left(\chi_{\mathrm{cr}}(H)+1-\chi(H)\right)\right)}{\chi(H)\left(\chi(H)-1\right)}$
each, and the $\chi(H)$-th colour class is of size $n\cdot\nicefrac{x\left(\chi_{\mathrm{cr}}(H)+1-\chi(H)\right)}{\chi(H)}$.\footnote{Again, we neglect rounding issues.}
Additional edges can be inserted into the last colour class arbitrarily.
Komlós calls these graphs \emph{bottleneck graphs with parameters
$x$ and $\chi_{\mathrm{cr}}(H)$}.\footnote{Note that the parameter $\chi(H)$ need not be an input as it can
be reconstructed from $\chi_{\mathrm{cr}}(H)$ using~(\ref{eq:chicrVSchi}).}

Note also that Theorem~\ref{thm:KomlosOriginal} does not cover the
case of perfect tilings, i.e., when $\TIL(H,G)=\left\lfloor \frac{n}{v(H)}\right\rfloor $.
Indeed, the answer to this ``exact problem'' (as opposed to approximate)
is more complicated as was shown by Kühn and Osthus~\cite{Kuehn2009}. 

Here, we reprove Komlós's Theorem. Actually, our proof also gives
a stability version of Theorem~\ref{thm:KomlosOriginal}. This stability
version seems to be new.
\begin{thm}
\label{thm:KomlosStability}Let $H$ be an arbitrary graph, and $x\in[0,1]$.
Then for every $\epsilon>0$ there exists a number $n_{0}\in\mathbb{N}$
such that the following holds. Suppose that $G$ is a graph of order
$n>n_{0}$ with minimum degree at least as in~(\ref{eq:KomlosOrigMinDeg}).
Then 
\[
\TIL(H,G)\ge\frac{\left(x-\epsilon\right)n}{v(H)}\;.
\]

Furthermore, if $x\in[0,1)$ then for every $\epsilon>0$ there exist
numbers $n_{0}\in\mathbb{N}$ and $\delta>0$ such that the following
holds. Suppose that $G$ is a graph of order $n>n_{0}$ with minimum
degree at least as in~(\ref{eq:KomlosOrigMinDeg}). Then we have
\[
\TIL(H,G)\ge\frac{\left(x+\delta\right)n}{v(H)}\;,
\]
unless $G$ is $\epsilon$-close in the edit distance\footnote{see Section~\ref{subsec:EditDistance} for a definition}
to a bottleneck graph with parameters $x$ and $\chi_{\mathrm{cr}}(H)$.
\end{thm}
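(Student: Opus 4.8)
The strategy is to move to the graphon setting, establish there a limit version of Koml\'os's theorem together with an exact description of the extremal objects, and then transfer back to finite graphs using the limit machinery of~\cite{HlHuPi:TilingsInGraphons}. Write $c(x)$ for the coefficient multiplying $n$ in~\eqref{eq:KomlosOrigMinDeg}. The graphon statement to aim for is: \emph{if $W$ is a graphon with $\deg_W(\alpha)\ge c(x)$ for almost every $\alpha$, then $\FTIL(H,W)\ge x/v(H)$; and if in addition $x<1$ and equality holds, then $W$ is, up to a measure-preserving bijection and modification on a null set, a bottleneck graphon with parameters $x$ and $\chi_{\mathrm{cr}}(H)$}---the evident graphon analogue of Koml\'os's bottleneck graphs (a complete $\chi(H)$-partite graphon with the prescribed part sizes, the block inside the smallest part being arbitrary, as it does not influence $\FTIL(H,\cdot)$). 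Granting this, the first assertion of the theorem is essentially free: were it false, there would be graphs $G_n$ with $v(G_n)\to\infty$, $\delta(G_n)\ge c(x)v(G_n)$ and $\TIL(H,G_n)<(x-\epsilon)v(G_n)/v(H)$; passing to a cut-distance limit $W$, the results of~\cite{HlHuPi:TilingsInGraphons} yield $\deg_W\ge c(x)$ a.e.\ and $\FTIL(H,W)\le\liminf_n\TIL(H,G_n)/v(G_n)\le(x-\epsilon)/v(H)$, contradicting the graphon inequality.

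The graphon inequality $\FTIL(H,W)\ge x/v(H)$ is where the strength of Koml\'os's theorem is paid for. I would follow the skeleton of Koml\'os's original argument but carry it out directly on $W$, so that the Szemer\'edi regularity lemma is not needed: the role of the critical chromatic number is that a copy of $H$ embeds into a complete $\chi(H)$-partite blow-up whose part sizes need only be in the ratio dictated by the smallest colour class of $H$, and $c(x)$ is precisely the minimum degree at which one can find---in the appropriate ``reduced'' sense---a fractional packing of such blow-ups of total weight $x$. This interpolates between the Hajnal\textendash Szemer\'edi regime at $x=1$ (governed by $\chi_{\mathrm{cr}}(H)$) and the Erd\H{o}s\textendash Stone regime as $x\to0$ (governed by $\chi(H)-1$). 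Concretely one isolates a fractional-tiling lemma for high-degree graphons---equivalently, via the duality between fractional tilings and fractional covers established in~\cite{HlHuPi:TilingsInGraphons}, a lower bound on $\FCOV(H,W)$---and then invokes the counting and embedding results of~\cite{HlHuPi:TilingsInGraphons} to convert the fractional packing into genuine (graphon, and hence finite) $H$-tilings. The extremal characterisation then becomes a stability analysis of the underlying fractional linear program: one checks that the bound can be made tight, while keeping $\deg_W\ge c(x)$ a.e., only by a complete $\chi(H)$-partite graphon with the bottleneck part sizes.

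For the stability assertion I argue by contradiction at the finite level. If it fails for some $\epsilon>0$ then, taking $\delta=1/k$ and $n_0=k$, one obtains graphs $G_k$ with $v(G_k)\to\infty$, $\delta(G_k)\ge c(x)v(G_k)$, $\TIL(H,G_k)<(x+1/k)v(G_k)/v(H)$, and $G_k$ at edit distance at least $\epsilon$ from every bottleneck graph with parameters $x$ and $\chi_{\mathrm{cr}}(H)$. Pass to a cut-distance limit $W$ along a subsequence. By~\cite{HlHuPi:TilingsInGraphons}, $\deg_W\ge c(x)$ a.e.\ and $\FTIL(H,W)\le\liminf_k\TIL(H,G_k)/v(G_k)\le x/v(H)$; the graphon inequality then forces $\FTIL(H,W)=x/v(H)$, and since $x<1$ the graphon characterisation forces $W$ to be a bottleneck graphon $B$. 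Thus $G_k\to B$ in cut distance. It remains to upgrade cut closeness to edit closeness, and this is the second, essential use of the minimum-degree hypothesis: cut closeness to $B$ already fixes a partition of $V(G_k)$ into the right number of classes of the right sizes, with only $o(v(G_k)^2)$ edges missing between classes and $o(v(G_k)^2)$ edges present inside the first $\chi(H)-1$ classes, and $\delta(G_k)\ge c(x)v(G_k)$ then rules out all but $o(v(G_k))$ vertices having the wrong neighbourhood pattern; a local cleanup of those few vertices yields a bottleneck graph within edit distance $o(1)<\epsilon$ of $G_k$, the required contradiction.

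The main obstacle is the graphon characterisation of the extremal configurations: promoting ``the fractional packing bound is tight'' to ``$W$ is exactly a bottleneck graphon'' needs a genuine rigidity analysis of the fractional-tiling LP coupled with the degree constraint, which is considerably more delicate than the inequality itself. A secondary, more technical point is the cut-to-edit boosting above, which is standard in spirit but must be carried out by hand and genuinely uses $\delta(G_k)\ge c(x)v(G_k)$, since cut closeness alone never implies edit closeness.
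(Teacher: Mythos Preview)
Your proposal follows the same architecture as the paper's proof: state a graphon version (the paper's Theorem~\ref{thm:KomlosGraphon}) asserting both the inequality $\FCOV(H,W)\ge x/v(H)$ and the exact characterisation of equality as bottleneck graphons, then deduce Theorem~\ref{thm:KomlosStability} by contradiction via compactness, lower-semicontinuity of the tiling number (Theorem~\ref{thm:lowersemicontgraphs}), and LP duality (Theorem~\ref{thm:LPdualityGraphons}). You correctly identify the rigidity analysis of the extremal case as the hard core; the paper devotes all of Section~\ref{subsec:stabilityKomlos} to exactly this.

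The one place you diverge is the cut-to-edit upgrade in the stability part, and here you over-complicate matters. You call this ``the second, essential use of the minimum-degree hypothesis'' and propose a vertex-by-vertex cleanup. The paper instead observes (Lemma~\ref{lem:bottleEditVsCut}) that one can normalise to the bottleneck graphon that is zero on the last diagonal block; this representative is $\{0,1\}$-valued, and for $\{0,1\}$-valued targets, convergence in cut norm automatically implies convergence in $\mathcal{L}^1$ (this is~\cite[Proposition~8.24]{Lovasz2012}). No appeal to $\delta(G_k)\ge c(x)v(G_k)$ is needed at this step --- indeed, your own sketch already reaches ``$o(v(G_k)^2)$ edges missing between classes and $o(v(G_k)^2)$ edges present inside the first $\chi(H)-1$ classes'', which \emph{is} edit-closeness to a bottleneck graph, before you invoke the minimum degree. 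So your claim that ``cut closeness alone never implies edit closeness'' is true in general but not in this situation, and the subsequent cleanup is unnecessary.
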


The original proof of Theorem~\ref{thm:KomlosOriginal} is not lengthy
but uses an ingenious recursive regularization of the graph $G$.\footnote{See Section~\ref{sec:ComparingProofs}.}
Our proof offers an alternative point of view on the problem. In fact
we believe it follows the most natural strategy: \emph{If $G$ had
only a small tiling number then, by the LP duality,}\footnote{Normally, the LP duality would require the \emph{fractional} version
of the tiling number to be considered. However, we are able to overcome
this matter.}\emph{ it would have a small fractional $F$-cover. This would lead
to a contradiction to the minimum degree assumption.} The actual execution
of this proof strategy, using the graphon formalism, is quite technical,
in particular in the stability part. Tools that we need to use to
this end involve the Banach\textendash Alaoglu Theorem, and arguments
about separability of function spaces. While the amount of analytic
tools needed may be viewed as a disincentive we actually believe that
working out these techniques will be useful in bringing more tools
from graph limit theories to extremal combinatorics.

\subsection{Organization of the paper}

In Section~\ref{sec:Preliminaries} we introduce the notation and
recall background regarding measure theory, graphons and extremal
graph theory. In Section~\ref{sec:TilingsGraphons} we give a digest
of those parts of the theory of tilings in graphons developed in~\cite{HlHuPi:TilingsInGraphons}
that are needed in the present paper. Thus, any reader familiar with
the general theory of graphons should be able to read this paper without
having to study~\cite{HlHuPi:TilingsInGraphons}. In Section~\ref{sec:KomlosThm}
we state the graphon version of Komlós's Theorem, and use it to deduce
Theorem~\ref{thm:KomlosStability}. This graphon version of Komlós's
Theorem is then proved in Section~\ref{sec:ProofKomlos}. Sections~\ref{sec:ComparingProofs}
and~\ref{sec:FurtherApplications} contain some concluding comments.

\section{Preliminaries\label{sec:Preliminaries}}

\subsection{Basic measure theory and weak{*} convergence}

Throughout, we shall work with an atomless Borel probability space
$\Omega$ equipped with a measure $\nu$ (defined on an implicit $\sigma$-algebra). 

Given a function $f$ and a number $a$ we define its support $\SUPPORT f=\{x:f(x)\neq0\}$
and its variant $\SUPPORTPOSITIVE_{a}\:f=\{x:f(x)\ge a\}$. Recall
that a set is \emph{null} if it has zero measure. \textquotedblleft \emph{Almost
everywhere}\textquotedblright{} is a synonym to \textquotedblleft up
to a null-set\textquotedblright . If $f$ is a measurable function,
we write $\ESSINF f:=\textrm{sup}\{a:\{f(x)\le a\}\textrm{ is null}\}$
for the \emph{essential infimum} of $f$ and $\ESSSUP f:=\textrm{inf}\{a:\{f(x)\ge a\}\textrm{ is null}\}$
for the \emph{essential supremum} of $f$.

The product measure on $\Omega^{k}$ is denoted by $\nu^{k}$. Recall,
that this measure can be constructed by Caratheodory's construction
from the $k$-th power of the $\sigma$-algebra underlying $\Omega$.
In particular, we have the following basic fact (which we state only
for the case $k=2$, which will be needed later).
\begin{fact}
\label{fact:approximate}Suppose that $P\subset\Omega^{2}$ is a set
of positive measure. Then for every $\epsilon>0$ there exist sets
$X,Y\subset\Omega$ of positive measure so that
\[
\nu^{2}\left(X\times Y\cap P\right)\ge(1-\epsilon)\nu(X)\nu(Y)\;.
\]
\end{fact}

If $\Omega$ is a Borel probability space, then it is a separable
measure space. The Banach space $\mathcal{L}^{1}(\Omega)$ is separable
(see e.g.~\cite[Theorem 13.8]{BrBrTh:RealAnalysis}). The dual of
$\mathcal{L}^{1}(\Omega)$ is $\mathcal{L}^{\infty}(\Omega)$. Recall
that a sequence $f_{1},f_{2},\ldots\in\mathcal{L}^{\infty}(\Omega)$
\emph{converges weak{*}} to a function $f\in\mathcal{L}^{\infty}(\Omega)$
if for each $g\in\mathcal{L}^{1}(\Omega)$ we have that $\int_{\Omega}f_{n}g\rightarrow\int_{\Omega}fg$.
This convergence notion defines the so-called \emph{weak{*} topology}
on $\mathcal{L}^{\infty}(\Omega)$. Let us remark that this topology
is not metrizable in general. The sequential Banach\textendash Alaoglu
Theorem (as stated for example in~\cite[Theorem 1.9.14]{Tao:EpsilonRoomI})
in this setting reads as follows.
\begin{thm}
\label{thm:BanachAlauglou}If $\Omega$ is a Borel probability space
then each sequence of functions of $\mathcal{L}^{\infty}(\Omega)$-norm
at most~1 contains a weak{*} convergent subsequence.
\end{thm}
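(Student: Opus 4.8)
The plan is to derive the statement from the separability of $\mathcal{L}^{1}(\Omega)$ --- which, as recalled above, holds because $\Omega$ is a Borel probability space --- by a diagonalization argument. Fix a sequence $f_{1},f_{2},\ldots\in\mathcal{L}^{\infty}(\Omega)$ with $\|f_{n}\|_{\mathcal{L}^{\infty}}\le1$ for all $n$, and fix a countable set $\{g_{1},g_{2},\ldots\}$ that is dense in $\mathcal{L}^{1}(\Omega)$.

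First I would extract the subsequence. For each fixed $k$, H\"older's inequality together with $\|f_{n}\|_{\mathcal{L}^{\infty}}\le1$ gives $\bigl|\int f_{n}g_{k}\bigr|\le\|g_{k}\|_{\mathcal{L}^{1}}$, so the real sequence $\bigl(\int f_{n}g_{k}\bigr)_{n}$ is bounded and has a convergent subsequence. Applying this for $k=1,2,\ldots$ in turn and passing to a diagonal subsequence, I obtain a subsequence $(f_{n_{j}})_{j}$ for which $\lim_{j\to\infty}\int f_{n_{j}}g_{k}$ exists for every $k$.

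Next I would upgrade this to convergence against every $g\in\mathcal{L}^{1}(\Omega)$. Given $g$ and $\eta>0$, choose $k$ with $\|g-g_{k}\|_{\mathcal{L}^{1}}<\eta$; then for all $i,j$
\[
\Bigl|\int f_{n_{i}}g-\int f_{n_{j}}g\Bigr|\le\Bigl|\int f_{n_{i}}(g-g_{k})\Bigr|+\Bigl|\int f_{n_{i}}g_{k}-\int f_{n_{j}}g_{k}\Bigr|+\Bigl|\int f_{n_{j}}(g_{k}-g)\Bigr|\le2\eta+\Bigl|\int f_{n_{i}}g_{k}-\int f_{n_{j}}g_{k}\Bigr|\,,
\]
where the two outer terms are controlled using $\|f_{n_{i}}\|_{\mathcal{L}^{\infty}},\|f_{n_{j}}\|_{\mathcal{L}^{\infty}}\le1$. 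Letting $i,j\to\infty$ shows that $\bigl(\int f_{n_{j}}g\bigr)_{j}$ is Cauchy, hence convergent; call its limit $\Lambda(g)$. The functional $\Lambda\colon\mathcal{L}^{1}(\Omega)\to\mathbb{R}$ is clearly linear, and $|\Lambda(g)|\le\|g\|_{\mathcal{L}^{1}}$, so it is bounded with $\|\Lambda\|\le1$. Finally, since $\Omega$ is a (in particular $\sigma$-finite) measure space, the duality $\bigl(\mathcal{L}^{1}(\Omega)\bigr)^{*}=\mathcal{L}^{\infty}(\Omega)$ provides $f\in\mathcal{L}^{\infty}(\Omega)$ with $\|f\|_{\mathcal{L}^{\infty}}=\|\Lambda\|\le1$ and $\Lambda(g)=\int fg$ for all $g$. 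By construction $\int f_{n_{j}}g\to\int fg$ for every $g\in\mathcal{L}^{1}(\Omega)$, i.e. $f_{n_{j}}\WEAKCONV f$, as required.

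The one step that carries any real content is the upgrade from a countable dense family to all of $\mathcal{L}^{1}(\Omega)$, and it is precisely there that the uniform norm bound $\|f_{n}\|_{\mathcal{L}^{\infty}}\le1$ is indispensable. (Equivalently, one may phrase the whole argument as: separability of $\mathcal{L}^{1}(\Omega)$ makes the weak$^{*}$ topology metrizable on norm-bounded sets, so the weak$^{*}$-compactness of the unit ball of $\mathcal{L}^{\infty}(\Omega)=\bigl(\mathcal{L}^{1}(\Omega)\bigr)^{*}$ furnished by the usual Banach--Alaoglu theorem becomes sequential compactness.) Identifying the limit functional with an element of $\mathcal{L}^{\infty}(\Omega)$ is then routine from the standard $\mathcal{L}^{1}$--$\mathcal{L}^{\infty}$ duality on a finite measure space.
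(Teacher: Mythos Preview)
Your argument is correct and is the standard proof of the sequential Banach--Alaoglu theorem via diagonalization against a countable dense subset of the predual. Note, however, that the paper does not actually supply its own proof of this statement: it simply quotes it as a known result (citing Tao's \emph{An Epsilon of Room}, Theorem~1.9.14), so there is no ``paper's proof'' to compare against. Your write-up is a faithful unpacking of exactly that classical argument, including the parenthetical observation that separability of $\mathcal{L}^{1}(\Omega)$ makes the weak$^{*}$ topology metrizable on bounded sets, which is the usual way to pass from topological to sequential compactness.
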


\subsection{Graphons}

Our notation follows mostly~\cite{Lovasz2012}. Our graphons will
be defined on $\Omega^{2}$. Recall that $\Omega$ is an atomless
Borel probability space with probability measure $\nu$. 

We refer the reader to~\cite{Lovasz2012} to the key notions of \emph{cut-norm
}$\|\cdot\|_{\square}$ and \emph{cut-distance }$\DIST_{\square}(\cdot,\cdot)$.
We just emphasize that to derive the latter from the former, one has
to involve certain measure-preserving bijections. This step causes
that the cut-distance is coarser (in the sense of topologies) than
then cut-norm. When we say that a sequence of graphs \emph{converges}
to a graphon we refer to the cut-distance.

Suppose that we are given an arbitrary graphon $W:\Omega^{2}\rightarrow[0,1]$
and a graph $F$ whose vertex set is $[k]$. We write $W^{\otimes F}:\Omega^{k}\rightarrow[0,1]$
for a function defined by 
\[
W^{\otimes F}(x_{1},\ldots,x_{k})=\prod_{\substack{1\le i<j\le k\\
ij\in E(F)
}
}W(x_{i},x_{j})\;.
\]

Last, let us recall the notion of neighborhood and degree in a graphon
$W:\Omega^{2}\rightarrow[0,1]$. If $x_{1},\ldots,x_{\ell}\in\Omega$,
then the \emph{common neighborhood} $N(x_{1},\ldots,x_{\ell})$ is
the set $\bigcap_{i=1}^{\ell}\left(\SUPPORT W(x_{i},\cdot)\right)$.
The \emph{degree} of a vertex $x\in\Omega$ is $\deg_{W}(x)=\int_{y\in\Omega}W(x,y)$.
The \emph{minimum degree of $W$} is $\delta(W)=\ESSINF\deg_{W}(x)$.
It is well-known (see for example~\cite[Theorem 3.15]{Razborov2007})
that any limit graphon of sequence of graphs with large minimum degrees
has a large minimum degree.
\begin{lem}
\label{lem:mindegsemicontinuous}Suppose $\alpha>0$ and that $G_{1},G_{2},\ldots$
are finite graphs converging to a graphon $W$, and that their minimum
degrees satisfy $\delta(G_{i})\ge\alpha v(G_{i})$. Then $\delta(W)\ge\alpha$.\qed
\end{lem}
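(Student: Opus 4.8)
The plan is to transfer the statement to the language of the cut norm and then argue by contradiction. First I would pass from the graphs $G_{i}$ to their standard step-graphon representations $W_{G_{i}}$, for which $\deg_{W_{G_{i}}}$ takes the value $\deg_{G_{i}}(v)/v(G_{i})$ on the block corresponding to a vertex $v$, so that $\delta(W_{G_{i}})=\delta(G_{i})/v(G_{i})\ge\alpha$. Identifying the underlying space of $W_{G_{i}}$ with $\Omega$ (both being atomless standard probability spaces), the convergence $G_{i}\to W$ in the cut-distance says that $\DIST_{\square}(W_{G_{i}},W)\to0$; hence for each $i$ we may choose a measure-preserving bijection $\phi_{i}$ of $\Omega$ such that, putting $U_{i}(x,y)=W_{G_{i}}(\phi_{i}(x),\phi_{i}(y))$, we have $\|U_{i}-W\|_{\square}\to0$. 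Since $\phi_{i}$ is measure-preserving, $\deg_{U_{i}}=\deg_{W_{G_{i}}}\circ\phi_{i}$ is equidistributed with $\deg_{W_{G_{i}}}$, and therefore $\delta(U_{i})=\ESSINF\deg_{U_{i}}\ge\alpha$ as well.

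Next I would suppose, towards a contradiction, that $\delta(W)<\alpha$. Then there are $\eta>0$ and a measurable set $A\subseteq\Omega$ with $\nu(A)=:\beta>0$ on which $\deg_{W}(x)\le\alpha-\eta$. Integrating the degree over $A$ gives
\[
\int_{A\times\Omega}W=\int_{A}\deg_{W}\;\le\;(\alpha-\eta)\beta\,,\qquad\int_{A\times\Omega}U_{i}=\int_{A}\deg_{U_{i}}\;\ge\;\alpha\beta\,,
\]
so that $\int_{A\times\Omega}(U_{i}-W)\ge\eta\beta$ for every $i$. On the other hand, applying the definition of the cut norm with the test sets $A$ and $\Omega$ yields $\bigl|\int_{A\times\Omega}(U_{i}-W)\bigr|\le\|U_{i}-W\|_{\square}$. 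Combining, $\|U_{i}-W\|_{\square}\ge\eta\beta>0$ for all $i$, which contradicts $\|U_{i}-W\|_{\square}\to0$. Hence $\delta(W)\ge\alpha$.

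I do not expect a genuine obstacle here. The heart of the argument — the displayed integration together with the definition of the cut norm — is a single line. The only step needing a bit of care is the reduction in the first paragraph: one must know that cut-distance convergence of the $G_{i}$ to $W$ produces graphons $U_{i}$ on the \emph{common} space $\Omega$ with $\|U_{i}-W\|_{\square}\to0$ whose degree functions are still bounded below by $\alpha$. This is precisely where the remark following the definition of the cut-distance enters: the passage from the cut norm to the cut-distance only involves measure-preserving bijections, and such bijections preserve the distribution of the degree function, in particular its essential infimum.
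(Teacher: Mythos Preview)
Your argument is correct. Note, however, that the paper does not actually supply a proof of this lemma: it is stated as well-known, with a pointer to Razborov's flag-algebra paper, and closed immediately with a \qed. So there is nothing to compare against; your proof is the standard direct argument and is essentially complete. One cosmetic point: the infimum defining the cut-distance need not be attained, so strictly you should choose $\phi_{i}$ with $\|U_{i}-W\|_{\square}\le\DIST_{\square}(W_{G_{i}},W)+\tfrac{1}{i}$ rather than realizing the distance exactly; this does not affect the contradiction.
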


\subsection{Independent sets in graphons}

If $W:\Omega^{2}\rightarrow[0,1]$ is a graphon then we say that a
measurable set $A\subset\Omega$ is an \emph{independent set} in $W$
if $W$ is~0 almost everywhere on $A\times A$. The next (standard)
lemma asserts that a weak{*} limit of independent sets is again an
independent set.
\begin{lem}
\label{lem:limitofindependent}Let $W:\Omega^{2}\rightarrow[0,1]$
be a graphon. Suppose that $\left(A_{n}\right)_{n=1}^{\infty}$ is
a sequence of independent sets in $W$. Suppose that the indicator
functions of the sets $A_{n}$ converge weak{*} to a function $f:\Omega\rightarrow[0,1]$.
Then $\SUPPORT f$ is an independent set in $W$.
\end{lem}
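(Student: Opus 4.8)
The plan is to reduce the statement to the single assertion
\[
\int_{\Omega^{2}}W(x,y)\,f(x)\,f(y)\,\mathrm{d}\nu^{2}=0 .
\]
Once this is known, since $W\ge0$ and $f\ge0$ the integrand vanishes for $\nu^{2}$-almost every $(x,y)$; in particular on $\SUPPORT f\times\SUPPORT f$, where $f(x)f(y)>0$, we must have $W(x,y)=0$ almost everywhere. That is exactly the statement that $\SUPPORT f$ is an independent set in $W$ (after fixing, once and for all, a measurable representative of $f$, so that $\SUPPORT f$ is well-defined up to a null set).

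The only genuine subtlety is that the quantity we want to pass to the limit in is \emph{bilinear} in $\mathbf{1}_{A_{n}}$, and weak$^{*}$ convergence does not behave well under products. I would circumvent this by freezing one coordinate. By Fubini's theorem, for almost every $x\in\Omega$ the section $y\mapsto W(x,y)$ is measurable, hence lies in $\mathcal{L}^{\infty}(\Omega)\subseteq\mathcal{L}^{1}(\Omega)$; testing the hypothesis $\mathbf{1}_{A_{n}}\WEAKCONV f$ against this function yields
\[
g_{n}(x):=\int_{y}W(x,y)\,\mathbf{1}_{A_{n}}(y)\;\longrightarrow\;\int_{y}W(x,y)\,f(y)=:g(x)
\]
for almost every $x$. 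As $0\le g_{n}\le1$ on the probability space $\Omega$, dominated convergence upgrades this to $g_{n}\to g$ in $\mathcal{L}^{1}(\Omega)$.

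It remains to combine the two convergences. On one hand, splitting $\int_{\Omega}g_{n}\mathbf{1}_{A_{n}}=\int_{\Omega}(g_{n}-g)\mathbf{1}_{A_{n}}+\int_{\Omega}g\,\mathbf{1}_{A_{n}}$, the first term is at most $\|g_{n}-g\|_{1}\to0$ in absolute value, while the second converges to $\int_{\Omega}g\,f$ because $g\in\mathcal{L}^{1}(\Omega)$ and $\mathbf{1}_{A_{n}}\WEAKCONV f$. On the other hand, by Fubini and the independence of $A_{n}$,
\[
\int_{\Omega}g_{n}\mathbf{1}_{A_{n}}=\int_{A_{n}\times A_{n}}W\,\mathrm{d}\nu^{2}=0
\]
for every $n$. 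Hence $\int_{\Omega}g\,f=\int_{\Omega^{2}}W(x,y)f(x)f(y)\,\mathrm{d}\nu^{2}=0$, which is what was needed. I do not expect any real obstacle beyond this bookkeeping; the only care points are the uses of Fubini (to take sections and to swap the order of integration) and the choice of a measurable representative of $f$.
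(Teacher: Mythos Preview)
Your proof is correct, and it takes a cleaner, genuinely different route than the paper. The paper proceeds by contradiction: it truncates to $P=\SUPPORTPOSITIVE_{\epsilon}f$, locates rectangles $X\times Y\subset P\times P$ on which $W$ is positive on all but an $\epsilon^{2}/5$-fraction, and then uses weak$^{*}$ convergence against $\mathbf{1}_{X}$ and $\mathbf{1}_{Y}$ to show $\nu(X\cap A_{n}),\nu(Y\cap A_{n})\ge\tfrac{\epsilon}{2}\nu(X),\tfrac{\epsilon}{2}\nu(Y)$ for large $n$, contradicting independence of $A_{n}$. Your argument avoids both the truncation and the contradiction by proving directly that $\int W(x,y)f(x)f(y)\,\mathrm{d}\nu^{2}=0$, handling the bilinearity via the freeze-one-coordinate trick: weak$^{*}$ convergence gives $g_{n}\to g$ pointwise almost everywhere, dominated convergence upgrades this to $\mathcal{L}^{1}$, and then the split $\int g_{n}\mathbf{1}_{A_{n}}=\int(g_{n}-g)\mathbf{1}_{A_{n}}+\int g\,\mathbf{1}_{A_{n}}$ lets you pass to the limit in both factors. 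What you gain is a tidier direct argument with no auxiliary $\epsilon$; what the paper's approach buys is that it only ever tests weak$^{*}$ convergence against indicator functions, which some readers may find more elementary.
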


\begin{proof}
It is enough to prove that for each $\epsilon>0$, the set $P=\SUPPORTPOSITIVE_{\epsilon}f$
is independent. There is nothing to prove if $P$ is null, so assume
that $P$ has positive measure. Suppose that the statement is false.
Then by by Fact~\ref{fact:approximate} there exist sets $X,Y\subset P$
of positive measure such that
\begin{equation}
\nu^{2}\left(X\times Y\cap\left\{ (x,y)\in\Omega^{2}:W(x,y)=0\right\} \right)<\frac{\epsilon^{2}}{5}\nu(X)\nu(Y)\;.\label{eq:epsXY}
\end{equation}
Recall that $\int_{X}f\ge\epsilon\nu(X)$ and $\int_{Y}f\ge\epsilon\nu(Y)$.
By weak{*} convergence, for $n$ sufficiently large, $\nu(X\cap A_{n})\ge\frac{\epsilon}{2}\nu(X)$
and $\nu(Y\cap A_{n})\ge\frac{\epsilon}{2}\nu(Y)$. Since $A_{n}$
is an independent set, we have that $W$ is~0 almost everywhere on
$\left(X\cap A_{n}\right)\times\left(Y\cap A_{n}\right)$. This contradicts~(\ref{eq:epsXY}).
\end{proof}

\subsection{Edit distance\label{subsec:EditDistance}}

Given two $n$-vertex graphs $G$ and $H$, the \emph{edit distance
from $G$ to $H$} is the number of edges of $G$ that need to be
edited (i.e., added or deleted) to get $H$ from $G$. Here, we minimize
over all possible identifications of $V(G)$ and $V(H)$. So, for
example if $G$ and $H$ are isomorphic then their edit distance is~0.
We say that $H$ is \emph{$\epsilon$-close} to $G$ in the edit distance
if its distance from $H$ is at most $\epsilon{n \choose 2}$.

\subsection{Erd\H{o}s\textendash Stone\textendash Simonovits Stability Theorem\label{subsec:ErdStoSim}}

Suppose that $H$ is a graph of chromatic number $r$. The Erd\H{o}s\textendash Stone\textendash Simonovits
Stability Theorem~\cite{Erdos1946,Simonovits1968} asserts that if
$G$ is an $H$-free graph on $n$ vertices then $e(G)\le\left(1-\frac{1}{r-1}+o_{n}(1)\right){n \choose 2}$.
This is accompanied by a stability statement: for each $\epsilon>0$
there exists numbers $\delta>0$ and $n_{0}$ such that if $G$ is
an $H$-free graph on $n$ vertices, $n>n_{0}$ and $e(G)>\left(1-\frac{1}{r-1}-\delta\right){n \choose 2}$,
then $G$ must be $\epsilon$-close to the $(r-1)$-partite Turán
graph in the edit distance. We shall need the min-degree version of
this (which is actually weaker and easier to prove): if the minimum
degree of $G$ is at least $\left(1-\frac{1}{r-1}-\delta\right)n$
and $G$ is $H$-free, then $G$ must be $\epsilon$-close to the
$(r-1)$-partite Turán graph in the edit distance.

We say that $W:\Omega^{2}\rightarrow[0,1]$ is a \emph{$(r-1)$-partite
Turán graphon} if there exists a partition $\Omega=\Omega_{1}\sqcup\ldots\sqcup\Omega_{r-1}$
into sets of measure $\nicefrac{1}{r-1}$ each, such that $W_{\restriction\Omega_{i}\times\Omega_{j}}$
equals~1 almost everywhere for $i\neq j$ and equals~0 almost everywhere
for $i=j$. The stability part of the min-degree version of the Erd\H{o}s\textendash Stone\textendash Simonovits
Theorem yields the following:
\begin{thm}
\label{thm:graphonErdosStoneSimonovits}Suppose that $H$ is a graph
of chromatic number $r$. If $W$ is a graphon with $\int_{\Omega^{V(H)}}W^{\otimes H}=0$
and minimum degree at least $1-\frac{1}{r-1}$, then $W$ is a $(r-1)$-partite
Turán graphon.
\end{thm}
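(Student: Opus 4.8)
The plan is to obtain this graphon statement from the finite, minimum-degree version of the Erd\H{o}s--Stone--Simonovits stability theorem recalled in Section~\ref{subsec:ErdStoSim}, by a standard compactness argument in the space of graphons; the intermediary will be the $W$-random graph $\mathbb{G}(n,W)$, obtained by sampling $x_1,\dots,x_n\in\Omega$ independently according to $\nu$ and joining the $i$-th and $j$-th vertex with probability $W(x_i,x_j)$. Observe first that $\int W^{\otimes H}=0$ already forces $E(H)\neq\emptyset$, hence $r=\chi(H)\ge2$ and the quantity $1-\tfrac1{r-1}$ is meaningful.

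The first step is to record three properties of $\mathbb{G}(n,W)$ for large $n$. (i) Since $\int W^{\otimes H}=0$, the function $W^{\otimes H}$ vanishes $\nu^{v(H)}$-almost everywhere; as every $v(H)$-tuple of distinct sample points has law $\nu^{v(H)}$, for $\nu^{n}$-almost every choice of $x_1,\dots,x_n$ and every injection $\iota\colon V(H)\to[n]$ there is an edge $ab\in E(H)$ with $W(x_{\iota(a)},x_{\iota(b)})=0$, so no copy of $H$ can appear; there being only finitely many such $\iota$, $\mathbb{G}(n,W)$ is almost surely $H$-free. (ii) It is well known (see~\cite{Lovasz2012}) that $\mathbb{G}(n,W)$ converges to $W$ in cut distance almost surely. (iii) Since $\deg_W(x)\ge\delta(W)\ge1-\tfrac1{r-1}$ for $\nu$-almost every $x$, standard concentration (Hoeffding's inequality applied both to the empirical degrees and to the edge indicators, together with a union bound over the $n$ vertices) gives, for every fixed $\eta>0$, that $\delta(\mathbb{G}(n,W))\ge(1-\tfrac1{r-1}-\eta)n$ with probability tending to $1$.

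Now fix $\epsilon>0$ and let $\delta>0$ and $n_0$ be the constants supplied, for this $H$ and $\epsilon$, by the finite minimum-degree Erd\H{o}s--Stone--Simonovits stability theorem. By (i)--(iii), taking $\eta:=\delta$ in (iii), for all large $n$ there is an outcome of $\mathbb{G}(n,W)$ that is $H$-free, has minimum degree at least $(1-\tfrac1{r-1}-\delta)n$, and satisfies $\DIST_\square(\mathbb{G}(n,W),W)\le\epsilon$. By the stability theorem this graph is $\epsilon$-close in the edit distance to the balanced complete $(r-1)$-partite graph $T_n$, and since (suitably normalised) edit distance dominates cut distance this yields $\DIST_\square(\mathbb{G}(n,W),T_n)\le2\epsilon$. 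As $T_n$ converges to the standard $(r-1)$-partite Tur\'an graphon $W_0$, letting $n\to\infty$ along such outcomes gives
\[
\DIST_\square(W,W_0)\ \le\ \DIST_\square(W,\mathbb{G}(n,W))+\DIST_\square(\mathbb{G}(n,W),T_n)+\DIST_\square(T_n,W_0)\ \le\ 4\epsilon .
\]
Since $\epsilon>0$ was arbitrary, $\DIST_\square(W,W_0)=0$.

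Finally, I would upgrade $\DIST_\square(W,W_0)=0$ to the conclusion that $W$ is itself an $(r-1)$-partite Tur\'an graphon. By the standard characterization of cut distance zero (\cite{Lovasz2012}), $W$ is weakly isomorphic to $W_0$, and being an $(r-1)$-partite Tur\'an graphon is preserved under weak isomorphism: one transports the partition of $[0,1]$ witnessing the Tur\'an structure of $W_0$ through the measure-preserving maps realizing the weak isomorphism and takes preimages in $\Omega$, obtaining a partition $\Omega=\Omega_1\sqcup\dots\sqcup\Omega_{r-1}$ into sets of measure $\tfrac1{r-1}$ with $W=1$ almost everywhere on $\Omega_i\times\Omega_j$ for $i\neq j$ and $W=0$ almost everywhere on $\Omega_i\times\Omega_i$. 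I expect the main obstacle to be the uniform minimum-degree control in (iii): the bound has to hold simultaneously at all $n$ vertices, so one genuinely needs the exponentially small failure probabilities from concentration, and care is needed to pass from the essential infimum bound on $\deg_W$ to the empirical degrees; the measure-theoretic bookkeeping in (i) and the transport of the partition in the last step are routine.
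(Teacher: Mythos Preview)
Your proof is correct; the paper itself does not prove Theorem~\ref{thm:graphonErdosStoneSimonovits} but simply records it as a consequence of the finite minimum-degree Erd\H{o}s--Stone--Simonovits stability theorem stated immediately before, so you are supplying exactly the standard sampling/compactness derivation that the paper leaves implicit. The only point deserving a word more of care is your final step: weak isomorphism does not in general hand you a measure-preserving bijection in the direction needed to pull the partition back to $\Omega$, but since the Tur\'an graphon is a step function whose twin-quotient is the uniformly weighted $K_{r-1}$, any weakly isomorphic graphon has the same twin structure, and this directly yields the desired measurable partition of $\Omega$ (cf.\ the theory of pure graphons in~\cite{Lovasz2012}).
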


\section{Tilings in graphons\label{sec:TilingsGraphons}}

In this section, we recall the main concepts and results from~\cite{HlHuPi:TilingsInGraphons}.
Let us first recall the most important definitions of an \emph{$F$-tiling}
and a \emph{fractional $F$-cover} in a graphon. The definition of
$F$-tilings in graphons is inspired by the definition of \emph{fractional}
$F$-tilings in finite graphs (we explained in~\cite[Section 3.2]{HlHuPi:TilingsInGraphons}
that there should be no difference between integral and fractional
$F$-tilings in graphons). 
\begin{defn}
\label{def:graphontiling}Suppose that $W:\Omega^{2}\rightarrow[0,1]$
is a graphon, and that $F$ is a graph on the vertex set~$[k]$.
A function $\mathfrak{t}:\Omega^{k}\rightarrow[0,+\infty)$ is called
an \emph{$F$-tiling} in $W$ if 
\[
\text{\ensuremath{\SUPPORT\mathfrak{t}\subset}}\SUPPORT W^{\otimes F}\;,
\]
 and we have for each $x\in\Omega$ that 
\[
\sum_{\ell=1}^{k}\int_{(x_{1},\ldots,x_{\ell-1},x_{\ell+1},\ldots,x_{k})\in\Omega^{k-1}}\mathfrak{t}(x_{1},\ldots,x_{\ell-1},x,x_{\ell+1},\ldots,x_{k})\le1\;.
\]
The \emph{size} of an $F$-tiling $\mathfrak{t}$ is $\|\mathfrak{t}\|=\int_{\Omega^{k}}\mathfrak{t}$.
The \emph{$F$-tiling number} of $W$, denoted by $\TIL(F,W)$, is
the supremum of sizes over all $F$-tilings in $W$.
\end{defn}

For the definition of fractional $F$-covers in graphons one just
rewrites \emph{mutatis mutandis} the usual axioms of fractional $F$-covers
in finite graphs.
\begin{defn}
\label{def:covergraphon}Suppose that $W:\Omega^{2}\rightarrow[0,1]$
is a graphon, and~$F$ is a graph on the vertex set~$[k]$. A measurable
function $\mathfrak{c}:\Omega\rightarrow[0,1]$ is called a \emph{fractional
$F$-cover} in $W$ if
\[
\nu^{k}\left(\left(\SUPPORT W^{\otimes F}\right)\cap\left\{ (x_{1},x_{2},\ldots,x_{k})\in\Omega^{k}:\sum_{i=1}^{k}\mathfrak{c}(x_{i})<1\right\} \right)=0\;.
\]
The \emph{size} of $\mathfrak{c}$, denoted by $\|\mathfrak{c}\|$,
is defined by $\|\mathfrak{c}\|=\int_{\Omega}\mathfrak{c}$. The \emph{fractional
$F$-cover number }$\FCOV(F,W)$ of $W$ is the infimum of the sizes
of fractional $F$-covers in $W$\emph{.}
\end{defn}

Let us note that in~\cite[(3.7)]{HlHuPi:TilingsInGraphons}, we established
that
\begin{equation}
\mbox{the value of \ensuremath{\FCOV(F,W)} is attained by some fractional \ensuremath{F}-cover.}\label{eq:fcovattained}
\end{equation}

With these notions at hand, we can state two key results from~\cite{HlHuPi:TilingsInGraphons}:
the lower-semicontinuity of the $F$-tiling number, and the graphon
LP-duality.
\begin{thm}[{\cite[Theorem 3.4]{HlHuPi:TilingsInGraphons}}]
\label{thm:lowersemicontgraphs}Suppose that $F$ is a finite graph
and suppose that $(G_{n})$ is a sequence of graphs of growing orders
converging to a graphon $W:\Omega^{2}\rightarrow[0,1]$ in the cut-distance.
Then we have that $\liminf_{n}\frac{\TIL(F,G_{n})}{v(G_{n})}\ge\TIL(F,W)$.
\end{thm}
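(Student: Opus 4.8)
The plan is to prove the matching inequality $\TIL(F,G_n)\ge\bigl(\TIL(F,W)-o_n(1)\bigr)v(G_n)$ by transporting a near-optimal $F$-tiling of $W$ into $G_n$ with the help of Szemer\'edi's regularity lemma and a counting lemma; as the $\varepsilon>0$ below is arbitrary, this suffices. Fix $\varepsilon>0$ and an $F$-tiling $\mathfrak t$ in $W$ with $\|\mathfrak t\|>\TIL(F,W)-\varepsilon$. Two harmless normalisations come first. The marginal (load) condition forces $\int\mathfrak t\le\tfrac1k$, so $\int(\mathfrak t-C)_+\to0$ as $C\to\infty$ and we may truncate $\mathfrak t$ at a constant $C=C(\varepsilon)$, losing at most $\varepsilon$ in size while keeping $\mathfrak t$ a valid $F$-tiling. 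Also $\SUPPORT\mathfrak t\subseteq\SUPPORT W^{\otimes F}$ and $\{W^{\otimes F}\ge\gamma\}\uparrow\SUPPORT W^{\otimes F}$ as $\gamma\downarrow0$, so for a suitable $\gamma=\gamma(\varepsilon)>0$ we may in addition assume $\SUPPORT\mathfrak t\subseteq\{W^{\otimes F}\ge\gamma\}$; in particular $W(x_a,x_b)\ge\gamma$ for a.e.\ $(x_1,\dots,x_k)\in\SUPPORT\mathfrak t$ and every edge $ab$ of $F$.

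Next, apply the regularity lemma to $G_n$ with a small parameter $\varepsilon_0=\varepsilon_0(\varepsilon,\gamma)$, obtaining clusters $C_1,\dots,C_M$ (with $M$ bounded in terms of $\varepsilon_0$, and, after a further refinement, also corresponding via the coupling below to a refinement of any prescribed fixed partition of $\Omega$) and at most $\varepsilon_0\binom M2$ irregular pairs. Since $G_n\to W$ in the cut-distance, after a suitable measure-preserving identification the cluster-step-graphon of $G_n$ lies within $O(\varepsilon_0)+o_n(1)$ of $W$ in the cut-norm; via this identification $\mathfrak t$ projects to a nonnegative function $\bar{\mathfrak t}$ on the cluster-tuples of the weighted reduced graph $R_n$, namely $\bar{\mathfrak t}$ on a tuple equals the average of $\mathfrak t$ over the corresponding product of cluster-regions. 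Averaging does not increase loads, so $\bar{\mathfrak t}$ satisfies the marginal constraint of a fractional $F$-tiling of $R_n$. Call a pair of clusters in a tuple $(C_{a_1},\dots,C_{a_k})$ \emph{relevant} if the corresponding indices form an edge of $F$. Now \emph{clean} $\bar{\mathfrak t}$: delete from its support every cluster-tuple carrying a relevant pair that is either $\varepsilon_0$-irregular or of density below $\gamma/4$. After cleaning, every relevant pair in the support has positive density, so $\bar{\mathfrak t}$ is a genuine fractional $F$-tiling of $R_n$.

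The crux is that cleaning costs only $O(\varepsilon)$ in the size of $\bar{\mathfrak t}$, and this is exactly where the two normalisations are used. Using $\mathfrak t\le C$ and $\SUPPORT\mathfrak t\subseteq\{W^{\otimes F}\ge\gamma\}$, the mass of $\mathfrak t$ sitting over tuples with a relevant pair of cluster-density below $\gamma/4$ is at most $C\cdot e(F)$ times the $\nu^2$-measure of the set of pairs where $W\ge\gamma$ yet the ambient cluster-block average of $W$ is below $\gamma/2$; since such a pair witnesses a martingale deviation of $W$ larger than $\gamma/2$, $L^1$-martingale convergence makes this measure $<\varepsilon/(C\,e(F))$ once the regularity partition refines a sufficiently fine partition of $\Omega$ and $\varepsilon_0,o_n(1)$ are small; an analogous (easier) bound, using that irregular pairs have total area $\le\varepsilon_0$, handles the irregular tuples. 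It is precisely here that boundedness matters: an unbounded $F$-tiling with a ``quasirandom''-type support --- a positive-measure set admitting no dense combinatorial rectangle --- would defeat a naive argument. After cleaning, and after discarding the negligibly-many, negligibly-weighted tuples of tiny $\bar{\mathfrak t}$-value, $\bar{\mathfrak t}$ is a fractional $F$-tiling of $R_n$ of size $\ge\TIL(F,W)-O(\varepsilon)$, supported on cluster-tuples whose relevant pairs are all $\varepsilon_0$-regular of density $\ge\gamma/4$, and bounded above and below on its support. Finally, blow $\bar{\mathfrak t}$ up inside $G_n$: shaving a factor $(1-\delta)$ for a small $\delta=\delta(\varepsilon)$ so the demand $\bar{\mathfrak t}$ places on each cluster $C_a$ becomes strictly below $|C_a|$, randomly split each $C_a$ into pieces --- one per incident (tuple, coordinate) pair, of the prescribed sizes --- so that with high probability every relevant pair of pieces stays $2\varepsilon_0$-regular of density $\ge\gamma/8$; within each piece-tuple the counting lemma supplies many copies of $F$, and greedily removing disjoint copies (regularity surviving the deletion of a $\sqrt{\varepsilon_0}$-fraction of a part) tiles all but a $\sqrt{\varepsilon_0}$-fraction of it. The union over all tuples is a family of $\ge\bigl(\TIL(F,W)-O(\varepsilon)\bigr)v(G_n)$ vertex-disjoint copies of $F$ in $G_n$; letting $\varepsilon\to0$ completes the proof. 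Throughout, the quantifiers must be taken in the order: first $\varepsilon$, then $\mathfrak t$ and the constants $C,\gamma,\delta$, then $\varepsilon_0$ (hence $M$), and only then $n\to\infty$; the single genuinely delicate point is the cleaning estimate above.
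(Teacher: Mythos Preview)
The paper does not prove this theorem; it is quoted from~\cite{HlHuPi:TilingsInGraphons} and used as a black box, so there is no in-paper argument to compare against. Your proposal is the natural regularity-method transference (truncate and threshold a near-optimal $F$-tiling of $W$, regularise $G_n$, project to cluster-tuples, clean, then greedily embed), and the outline is sound.

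The one step that genuinely needs more care is the coupling you handle parenthetically. You want the Szemer\'edi partition of $G_n$, pulled back through the measure-preserving map $\varphi_n$ witnessing $\DIST_\square(G_n,W)\to0$, to refine a fixed partition $P_0$ of $\Omega$ chosen in advance. Since $\varphi_n$-images of the cells of $P_0$ need not be vertex-sets of $G_n$, a rounding step (harmless once $v(G_n)$ is large relative to $|P_0|$) is required before invoking the regularity lemma with that initial partition. Once this is in place, your low-density cleaning estimate follows, though the mechanism is $L^2$-monotonicity under refinement rather than ``$L^1$-martingale convergence'': if $P$ refines $P_0$ then $\|W-\EXP[W\mid P\times P]\|_2\le\|W-\EXP[W\mid P_0\times P_0]\|_2$, and Chebyshev bounds the measure of pairs where $W\ge\gamma$ yet the block average drops below $\gamma/2$. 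You also silently use that the $G_n$-density and $W$-density of each cluster-pair differ by at most $M^2\cdot\|W_{G_n}^{\varphi_n}-W\|_\square=o_n(1)$ to pass from ``$G_n$-density $<\gamma/4$'' to ``$W$-block-average $<\gamma/2$''. These are details to be spelled out rather than genuine gaps; the strategy works.
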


\begin{thm}[{\cite[Theorem 3.16]{HlHuPi:TilingsInGraphons}}]
\label{thm:LPdualityGraphons}Suppose that $W:\Omega^{2}\rightarrow[0,1]$
is a graphon and $F$ is an arbitrary finite graph. Then we have $\TIL(F,W)=\FCOV(F,W)$.
\end{thm}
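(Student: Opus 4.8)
The plan is to establish the two inequalities $\TIL(F,W)\le\FCOV(F,W)$ and $\TIL(F,W)\ge\FCOV(F,W)$ separately; the first (``weak duality'') is routine, while the second (the absence of a duality gap) carries all the content. For the first, fix an arbitrary $F$-tiling $\mathfrak{t}$ and an arbitrary fractional $F$-cover $\mathfrak{c}$, and write
\[
d_{\mathfrak{t}}(x)=\sum_{\ell=1}^{k}\int\mathfrak{t}(x_{1},\dots,x_{\ell-1},x,x_{\ell+1},\dots,x_{k})\,\mathrm{d}\nu^{k-1}
\]
for the degree function of $\mathfrak{t}$, so that the defining constraint of a tiling reads exactly $d_{\mathfrak{t}}\le1$ almost everywhere. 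Since $\SUPPORT\mathfrak{t}\subset\SUPPORT W^{\otimes F}$, the cover inequality $\sum_{i=1}^{k}\mathfrak{c}(x_{i})\ge1$ holds for $\nu^{k}$-almost every $(x_{1},\dots,x_{k})\in\SUPPORT\mathfrak{t}$; integrating it against $\mathfrak{t}\,\mathrm{d}\nu^{k}$ and using Fubini gives
\[
\|\mathfrak{t}\|\le\int_{\Omega^{k}}\mathfrak{t}(x_{1},\dots,x_{k})\Bigl(\sum_{i=1}^{k}\mathfrak{c}(x_{i})\Bigr)\mathrm{d}\nu^{k}=\int_{\Omega}\mathfrak{c}(x)\,d_{\mathfrak{t}}(x)\,\mathrm{d}\nu(x)\le\int_{\Omega}\mathfrak{c}=\|\mathfrak{c}\|\,,
\]
the last inequality using $\mathfrak{c}\ge0$ and $d_{\mathfrak{t}}\le1$. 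Taking a supremum over $\mathfrak{t}$ and an infimum over $\mathfrak{c}$ yields $\TIL(F,W)\le\FCOV(F,W)$.

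For the reverse inequality I would exhibit $\TIL(F,W)$ and $\FCOV(F,W)$ as the optimal values of a dual pair of infinite-dimensional linear programs over $\mathcal{L}^{\infty}(\Omega)$ and $\mathcal{L}^{1}(\Omega^{k})$, and run a minimax argument. Introduce the Lagrangian
\[
L(\mathfrak{c},\mathfrak{t})=\int_{\Omega}\mathfrak{c}\,(1-d_{\mathfrak{t}})\,\mathrm{d}\nu+\int_{\SUPPORT W^{\otimes F}}\mathfrak{t}\,\mathrm{d}\nu^{k}\,,
\]
considered for $\mathfrak{c}$ in the order interval $\{\mathfrak{c}\in\mathcal{L}^{\infty}(\Omega):0\le\mathfrak{c}\le1\}$ and for $\mathfrak{t}$ in the convex cone of nonnegative functions in $\mathcal{L}^{1}(\Omega^{k})$ supported on $\SUPPORT W^{\otimes F}$. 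Unwinding the definitions, $\inf_{\mathfrak{c}}\sup_{\mathfrak{t}}L=\FCOV(F,W)$: for a fixed $\mathfrak{c}$ the inner supremum equals $\int_{\Omega}\mathfrak{c}$ if $\mathfrak{c}$ is a fractional $F$-cover and $+\infty$ otherwise, and by~(\ref{eq:fcovattained}) the ensuing infimum over covers is attained. The crux is to interchange the two optimizations, and this is precisely where the Banach--Alaoglu theorem (Theorem~\ref{thm:BanachAlauglou}) is used: the order interval above is convex and weak-$*$ compact, the map $L(\cdot,\mathfrak{t})$ is affine and weak-$*$ continuous on it (continuity because $1-d_{\mathfrak{t}}\in\mathcal{L}^{1}(\Omega)$ for every $\mathfrak{t}\in\mathcal{L}^{1}(\Omega^{k})$), and $L(\mathfrak{c},\cdot)$ is affine and continuous in the $\mathcal{L}^{1}$-norm on the cone; a Sion-type minimax theorem, with the order interval as the compact side, then gives $\inf_{\mathfrak{c}}\sup_{\mathfrak{t}}L=\sup_{\mathfrak{t}}\inf_{\mathfrak{c}}L$.

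It then remains to recognize $\sup_{\mathfrak{t}}\inf_{\mathfrak{c}}L$ as $\TIL(F,W)$, and this is the step I expect to be the genuine obstacle. Performing the inner infimum over $0\le\mathfrak{c}\le1$ pointwise turns $\sup_{\mathfrak{t}}\inf_{\mathfrak{c}}L$ into the ``penalized'' quantity $\sup_{\mathfrak{t}}\bigl(\int_{\SUPPORT W^{\otimes F}}\mathfrak{t}\,\mathrm{d}\nu^{k}-\int_{\Omega}(d_{\mathfrak{t}}-1)^{+}\,\mathrm{d}\nu\bigr)$, which is at least $\TIL(F,W)$ because genuine tilings incur no penalty. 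The matching upper bound amounts to the claim that any nonnegative $\mathfrak{t}$ supported on $\SUPPORT W^{\otimes F}$ can be reduced to an $F$-tiling $\mathfrak{t}'\le\mathfrak{t}$ at a cost of at most $\int_{\Omega}(d_{\mathfrak{t}}-1)^{+}\,\mathrm{d}\nu$ in size; I would prove this by a ``shaving'' argument --- greedily (or via a continuous flow) remove mass from copies through currently over-loaded vertices, observing that each unit of removed mass destroys at least one unit of the total excess $\int_{\Omega}(d_{\mathfrak{t}}-1)^{+}\,\mathrm{d}\nu$, so the process stops after removing at most that much mass. Making the measure-theoretic bookkeeping of this reduction precise, handling the non-metrizability of the weak-$*$ topology in the minimax step, and coping with the poor continuity behaviour of the ``positivity'' set $\SUPPORT W^{\otimes F}$ are what make the argument technical. (A more finitary alternative --- approximate $W$ by finite graphs $G_{n}\to W$, use the LP-duality $\FTIL(F,G_{n})=\FCOV(F,G_{n})$ for finite graphs together with Theorem~\ref{thm:lowersemicontgraphs}, and pass to the limit --- would instead have to transport fractional covers between $W$ and its approximants, again calling for an argument in the spirit of Lemma~\ref{lem:limitofindependent}.)
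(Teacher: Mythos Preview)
The present paper does not prove Theorem~\ref{thm:LPdualityGraphons} at all: it is quoted verbatim from the companion paper~\cite{HlHuPi:TilingsInGraphons} (Theorem~3.16 there) as part of the background material in Section~\ref{sec:TilingsGraphons}, with no argument given here. So there is no ``paper's own proof'' to compare your attempt against; any substantive comparison would have to be with~\cite{HlHuPi:TilingsInGraphons}, not with this paper.

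On the merits of your sketch itself: the weak-duality half is correct and cleanly written. The strong-duality half is a reasonable outline --- Lagrangian plus Sion-type minimax with the cover side made weak-$*$ compact via Theorem~\ref{thm:BanachAlauglou} is a standard template --- and you correctly identify the ``shaving'' reduction (turning the penalized supremum $\sup_{\mathfrak{t}}\bigl(\|\mathfrak{t}\|-\int(d_{\mathfrak{t}}-1)^{+}\bigr)$ back into $\TIL(F,W)$) as the genuine work. Be aware that this step is not as innocent as the finite-graph intuition suggests: the greedy ``remove mass through an overloaded vertex'' picture has no canonical continuous formulation, and a naive pointwise rescaling such as $\mathfrak{t}/\max_{i}\max(1,d_{\mathfrak{t}}(x_{i}))$ does give a tiling but does not obviously satisfy the required size bound. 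You would need either a careful flow/transfinite argument or the finitary detour you mention at the end; in either case the details are nontrivial, which is presumably why the authors relegated the result to a separate paper.
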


\medskip{}

The following useful proposition relates qualitatively the $F$-tiling
number and the $F$-homomorphism density.
\begin{prop}
\label{prop:tilingVSdensity}Suppose that $F$ is a finite graph on
a vertex set $[k]$ . Then for an arbitrary graphon W we have that
$\TIL(F,W)=0$ if and only if 
\begin{equation}
\int_{\Omega^{k}}W^{\otimes F}=0\;.\label{eq:densityzero}
\end{equation}
\end{prop}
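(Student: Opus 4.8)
The plan is to prove the two directions separately. The easy direction is ``$\TIL(F,W)=0 \Rightarrow \int W^{\otimes F}=0$'', which I would actually prove in its contrapositive: if $\int_{\Omega^k} W^{\otimes F}>0$, I will exhibit an $F$-tiling of positive size. The natural candidate is a suitably scaled restriction of $W^{\otimes F}$ itself. More precisely, let $c>0$ be a small constant to be chosen, and set $\mathfrak{t} = c\cdot W^{\otimes F}$. Then $\SUPPORT\mathfrak{t}\subseteq\SUPPORT W^{\otimes F}$ holds trivially, and the per-vertex constraint in Definition~\ref{def:graphontiling} reads, for each $x\in\Omega$,
\[
\sum_{\ell=1}^{k} c\int W^{\otimes F}(x_1,\ldots,x_{\ell-1},x,x_{\ell+1},\ldots,x_k)\,\mathrm{d}\nu^{k-1} \le c\cdot k \le 1,
\]
since each integrand is bounded by $1$; so choosing $c=1/k$ already works, and $\|\mathfrak{t}\| = \frac1k\int W^{\otimes F}>0$. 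Hence $\TIL(F,W)>0$.

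For the converse direction, ``$\int W^{\otimes F}=0 \Rightarrow \TIL(F,W)=0$'', suppose $\int_{\Omega^k}W^{\otimes F}=0$. Since $W^{\otimes F}\ge 0$, this forces $W^{\otimes F}=0$ almost everywhere on $\Omega^k$, i.e.\ the set $\SUPPORT W^{\otimes F}$ is null. Now let $\mathfrak{t}$ be any $F$-tiling in $W$. By definition $\SUPPORT\mathfrak{t}\subseteq\SUPPORT W^{\otimes F}$, which is a null set, so $\mathfrak{t}=0$ almost everywhere, and therefore $\|\mathfrak{t}\|=\int\mathfrak{t}\,\mathrm{d}\nu^k = 0$. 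Taking the supremum over all $F$-tilings gives $\TIL(F,W)=0$.

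I do not expect any real obstacle here; the statement is essentially definitional once one observes the two elementary facts that $W^{\otimes F}$ is a nonnegative function bounded by $1$ and that a nonnegative integrable function has zero integral iff its support is null. The only point deserving a line of care is checking that $\mathfrak{t}=\frac1k W^{\otimes F}$ is genuinely admissible as an $F$-tiling, i.e.\ that the $k$ marginal integrals in the per-vertex constraint are each well-defined and bounded by $1$ for \emph{every} $x$ (not just almost every $x$); this is immediate since $0\le W^{\otimes F}\le 1$ pointwise, so each marginal is at most $1$ pointwise. As an alternative to the explicit tiling in the first direction, one could also invoke Theorem~\ref{thm:LPdualityGraphons} together with~(\ref{eq:fcovattained}) and argue about fractional $F$-covers, but the direct construction above is shorter and avoids the duality machinery.
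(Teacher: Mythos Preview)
Your proof is correct, but it takes a different route from the paper's. The paper proves the proposition in two sentences by invoking Theorem~\ref{thm:LPdualityGraphons} and~(\ref{eq:fcovattained}): $\TIL(F,W)=0$ iff $\FCOV(F,W)=0$ iff the constant zero function is an optimal fractional $F$-cover, and the latter holds iff $\SUPPORT W^{\otimes F}$ is null, which is~(\ref{eq:densityzero}). In other words, the paper does exactly the alternative you mention (and dismiss) in your final paragraph. Your direct argument---exhibiting $\tfrac{1}{k}W^{\otimes F}$ as an explicit tiling for one direction, and observing that any tiling supported in a null set integrates to zero for the other---is more elementary and entirely self-contained; it does not lean on the LP duality theorem, which is a substantial result imported from the companion paper. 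The paper's version is textually shorter but only because the heavy lifting has been outsourced. Both approaches are valid; yours makes the proposition independent of the duality machinery, which is arguably cleaner for a statement this basic.
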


\begin{proof}
By Theorem~\ref{thm:LPdualityGraphons} and~(\ref{eq:fcovattained})
we know, that $\TIL(F,W)=0$ if and only if the constant zero function
(up to a null set) is a fractional $F$ -cover of $W$ . The latter
property is equivalent to~(\ref{eq:densityzero}).
\end{proof}

\section{Komlós's Theorem\label{sec:KomlosThm}}

We state our result as a graphon counterpart of Theorem~\ref{thm:KomlosOriginal}.
First, in analogy to bottleneck graphs we define the class of bottleneck
graphons.
\begin{defn}
\label{def:bottleneckgraphon}Suppose that numbers\emph{ $x\in[0,1)$
and $\chi_{\mathrm{cr}}\in(1,+\infty)$} are given. Let us write $r=\left\lceil \chi_{\mathrm{cr}}\right\rceil $.
We say that a graphon $W:\Omega^{2}\rightarrow[0,1]$ is a \emph{bottleneck
graphon with parameters~$x$ and~$\chi_{\mathrm{cr}}$ }if there
exists a partition $\Omega=\Omega_{1}\sqcup\Omega_{2}\sqcup\ldots\sqcup\Omega_{r}$
such that $\nu(\Omega_{r})=\nicefrac{x\left(\chi_{\mathrm{cr}}+1-r\right)}{r}$,
$\nu(\Omega_{1})=\nu(\Omega_{2})=\ldots=\nu(\Omega_{r-1})=\nicefrac{\left(r-x\left(\chi_{\mathrm{cr}}+1-r\right)\right)}{r\left(r-1\right)}$,
and such that

\begin{itemize}
\item for each $1\le i<j\le r$, $W$ is 1 almost everywhere on $\Omega_{i}\times\Omega_{j}$,
\item for each $1\le i\le r-1$, $W$ is 0 almost everywhere on $\Omega_{i}\times\Omega_{i}$.
\end{itemize}
\end{defn}

A set of graphons on a given probability space $\Omega$ is called
\emph{a graphon class} if with each graphon it contains all graphons
isomorphic to it. Given a graphon $W$ and a graphon class $\mathcal{C}$,
we define $\DIST_{\square}(W,\mathcal{C})=\inf_{U\in\mathcal{C}}\|W-U\|_{\square}$.
We also define $\DIST_{1}(W,\mathcal{C})=\inf_{U\in\mathcal{C}}\|W-U\|_{1}$.

For a given $x\in[0,1]$ and $\chi_{\mathrm{cr}}\in(1,\infty)$, we
write $\mathcal{C}_{x,\chi_{\mathrm{cr}}}$ for the set of all bottleneck
graphons with parameters $x$ and $\chi_{\mathrm{cr}}$. This is obviously
a graphon class. The next standard lemma asserts that convergence
to $\mathcal{C}_{x,\chi_{\mathrm{cr}}}$in the cut-norm implies convergence
in the $\mathcal{L}^{1}$-norm.
\begin{lem}
\label{lem:bottleEditVsCut}Suppose that $x\in[0,1]$ and $\chi_{\mathrm{cr}}\in(1,\infty)$.
If $\left(W_{n}\right)$ is a sequence of graphons with $\DIST_{\square}(W_{n},\mathcal{C}_{x,\chi_{\mathrm{cr}}})\rightarrow0$
then $\DIST_{1}(W_{n},\mathcal{C}_{x,\chi_{\mathrm{cr}}})\rightarrow0$.
\end{lem}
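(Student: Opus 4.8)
The statement is a finite-dimensional compactness argument dressed up in graphon language: although the space of bottleneck graphons is not compact, it is parametrized by a single partition into $r$ named parts of fixed measures, plus the freedom to fill in $\Omega_r\times\Omega_r$ arbitrarily. The plan is to argue by contradiction. Suppose $\DIST_\square(W_n,\mathcal C_{x,\chi_{\mathrm{cr}}})\to 0$ but $\DIST_1(W_n,\mathcal C_{x,\chi_{\mathrm{cr}}})\ge\eta$ along a subsequence, for some $\eta>0$; pass to that subsequence. For each $n$ pick a bottleneck graphon $U_n\in\mathcal C_{x,\chi_{\mathrm{cr}}}$ with $\|W_n-U_n\|_\square\le 1/n$, witnessed by a partition $\Omega=\Omega_1^n\sqcup\cdots\sqcup\Omega_r^n$ with the prescribed measures. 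The indicator functions $\mathbf 1_{\Omega_i^n}$ lie in the unit ball of $\mathcal L^\infty(\Omega)$, so by Theorem~\ref{thm:BanachAlauglou} (applied $r$ times, refining the subsequence each time) we may assume $\mathbf 1_{\Omega_i^n}\WEAKCONV f_i$ for each $i\in[r]$, where $0\le f_i\le 1$ and $\int f_i=\nu(\Omega_i^n)$ is the prescribed constant. Weak$^*$ convergence also forces $\sum_i f_i = 1$ a.e., since $\sum_i\mathbf 1_{\Omega_i^n}=1$.

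The first key step is to upgrade this weak$^*$ convergence to the assertion that each $f_i$ is itself an indicator: $f_i=\mathbf 1_{\Omega_i}$ for a genuine partition $\Omega=\Omega_1\sqcup\cdots\sqcup\Omega_r$ with the right measures. This is where the structure of bottleneck graphons enters. For $i<j$, the sets $\Omega_i^n,\Omega_j^n$ are \emph{complete} to each other in $U_n$; since $\|W_n-U_n\|_\square\to 0$, a cut-norm estimate shows that the overlap regions must have $W_n$ close to~$1$ there. For $i\le r-1$, the set $\Omega_i^n$ is \emph{independent} in $U_n$, hence $W_n$ is small in cut norm on $\Omega_i^n\times\Omega_i^n$; by Lemma~\ref{lem:limitofindependent} (or rather a cut-norm variant of its proof), applied to a limit graphon $W$ of a further subsequence of $(W_n)$, the support of $f_i$ is an independent set in $W$ for $i\le r-1$. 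But a set that is simultaneously independent and, pairwise, complete to $r-1$ other sets of positive measure behaves like a color class of a complete $r$-partite graphon; combined with $\sum f_i=1$, this pins down the $f_i$ to be indicators of a partition. (Alternatively and more cleanly: the quantity $\int f_i(1-f_i)$ can be bounded by a cut-norm expression involving $W_n$ and the completeness/independence structure, forcing it to~$0$.)

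Once we know $f_i=\mathbf 1_{\Omega_i}$, define $U$ to be the bottleneck graphon that is $1$ on $\Omega_i\times\Omega_j$ for $i<j$, $0$ on $\Omega_i\times\Omega_i$ for $i\le r-1$, and \emph{equal to $W$} on $\Omega_r\times\Omega_r$, where $W$ is the cut-distance limit of the chosen subsequence of $(G_n)$—wait, more carefully: we should not invoke graph limits at all, since the $W_n$ need not converge. Instead, pass to yet a further subsequence so that $W_n\restriction\Omega_r^n\times\Omega_r^n$ stabilizes in the sense that the relevant integrals converge, or simply fix the last block of $U_n$ to agree with that of $W_n$ in the limiting coordinates. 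The cleanest route: set $U$ to be $1$ on $\Omega_i\times\Omega_j$ ($i<j$), $0$ on $\Omega_i\times\Omega_i$ ($i\le r-1$), and any fixed measurable choice (say, constant) on $\Omega_r\times\Omega_r$; then $U\in\mathcal C_{x,\chi_{\mathrm{cr}}}$, and I claim $\|W_n-U\|_1\to 0$. On the off-diagonal blocks and the first $r-1$ diagonal blocks this follows from the completeness/independence estimates above together with $\mathbf 1_{\Omega_i^n}\to\mathbf 1_{\Omega_i}$ in measure (which weak$^*$ convergence to an indicator upgrades to, by a standard argument: $\int\mathbf 1_{\Omega_i^n}\mathbf 1_{\Omega_i}\to\nu(\Omega_i)$ and $\|\mathbf 1_{\Omega_i^n}\|_1=\nu(\Omega_i)$ force $L^1$ convergence of indicators). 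On $\Omega_r\times\Omega_r$ the two graphons may genuinely differ, but that block has measure $\nu(\Omega_r)^2$, which is a \emph{fixed} constant—and here is the subtlety that makes the lemma true only because of how $\mathcal C_{x,\chi_{\mathrm{cr}}}$ is defined: since every member of $\mathcal C_{x,\chi_{\mathrm{cr}}}$ is free on its last block, we are allowed to \emph{match} $U$'s last block to $W_n$'s last block. So redefine $U=U_n$ on $\Omega_r^n\times\Omega_r^n$; then $\|W_n-U_n\|_1$ is small on \emph{every} block, contradicting $\DIST_1(W_n,\mathcal C_{x,\chi_{\mathrm{cr}}})\ge\eta$.

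The main obstacle is the middle step: turning the weak$^*$ limits $f_i$ into honest indicator functions. Weak$^*$ limits of indicators are generally just $[0,1]$-valued, and it is precisely the rigid bipartite-completeness structure of bottleneck graphons, fed through cut-norm smallness of $W_n-U_n$, that rules out fractional limits. I would isolate this as a sublemma: if $A_n,B_n$ are measurable sets complete to each other in graphons $U_n$, $\|W_n-U_n\|_\square\to 0$, and $\mathbf 1_{A_n}\WEAKCONV f$, $\mathbf 1_{B_n}\WEAKCONV g$, then $W_n\to 1$ "on $\mathrm{supp}\,f\times\mathrm{supp}\,g$" in an averaged sense; pairing several such relations (each $\Omega_i$ for $i\le r-1$ is complete to $r-2$ others and independent) with the partition identity $\sum f_i=1$ leaves no room for any $f_i\notin\{0,1\}$. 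Everything else is bookkeeping with measures and routine cut-norm/$L^1$ comparisons of the type already used in the proof of Lemma~\ref{lem:limitofindependent}.
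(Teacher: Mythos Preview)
Your approach is viable in principle but dramatically overcomplicated, and the ``middle step'' you flag as the main obstacle is never actually carried out---you only gesture at why the weak$^{*}$ limits $f_i$ should be indicators. More importantly, the whole Banach--Alaoglu/weak$^{*}$ machinery is unnecessary here, and the paper's proof shows why.

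The paper's argument is essentially two lines. Pick $U_n\in\mathcal{C}_{x,\chi_{\mathrm{cr}}}$ with $\|W_n-U_n\|_\square\to 0$, witnessed by partitions $\Omega=\Omega_1^{(n)}\sqcup\cdots\sqcup\Omega_r^{(n)}$. Modify $W_n$ to $W_n'$ by zeroing it out on $\Omega_r^{(n)}\times\Omega_r^{(n)}$; then $W_n'$ is cut-norm close to the \emph{specific} $\{0,1\}$-valued bottleneck graphon $B$ that is~$0$ on the last block. Now invoke \cite[Proposition~8.24]{Lovasz2012}: cut-norm convergence to a $\{0,1\}$-valued graphon automatically upgrades to $L^1$-convergence. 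Hence $\|W_n'-B\|_1\to 0$, and since $\mathcal{C}_{x,\chi_{\mathrm{cr}}}$ lets you put $W_n$ itself back on the last block, $\DIST_1(W_n,\mathcal{C}_{x,\chi_{\mathrm{cr}}})\to 0$.

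The point you are missing is precisely this rigidity of $\{0,1\}$-valued targets. Concretely, on any rectangle $\Omega_i^{(n)}\times\Omega_j^{(n)}$ where $U_n$ is identically~$0$ or identically~$1$, the function $W_n-U_n$ has \emph{constant sign}, so
\[
\int_{\Omega_i^{(n)}\times\Omega_j^{(n)}}|W_n-U_n|\;=\;\Bigl|\int_{\Omega_i^{(n)}\times\Omega_j^{(n)}}(W_n-U_n)\Bigr|\;\le\;\|W_n-U_n\|_\square\;.
\]
Summing over the at most $r^2$ blocks (and matching $U_n$ to $W_n$ on $\Omega_r^{(n)}\times\Omega_r^{(n)}$) gives $\|W_n-U_n\|_1\le r^2\|W_n-U_n\|_\square\to 0$ directly---no subsequences, no weak$^{*}$ limits, no need to prove that any $f_i$ is an indicator. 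Your compactness argument is essentially an attempt to reprove Proposition~8.24 from scratch in this special case, and the sketch you give of the crucial indicator step does not yet constitute a proof.
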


\begin{proof}
Let $B_{x,\chi_{\mathrm{cr}}}$ be (any representative of the isomorphism
class of) the bottleneck graphons with parameters $x$ and $\chi_{\mathrm{cr}}$
in which $B_{x,\chi_{\mathrm{cr}}}$ restricted to $\Omega_{r}\times\Omega_{r}$
is zero. The fact that $\DIST_{\square}(W_{n},\mathcal{C}_{x,\chi_{\mathrm{cr}}})\rightarrow0$
allows us to find partitions $\Omega^{(n)}=\Omega_{1}^{(n)}\sqcup\ldots\sqcup\Omega_{r}^{(n)}$
where the sets $\Omega_{i}^{(n)}$ have measures as in Definition~\ref{def:bottleneckgraphon}
and approximately satisfy the other properties. Let us modify each
graphon $W_{n}$ by making it zero on $\Omega_{r}^{(n)}\times\Omega_{r}^{(n)}$.
For the modified graphons $W'_{n}$, we have $\DIST_{\square}(W'_{n},B_{x,\chi_{\mathrm{cr}}})\rightarrow0$.
The graphon~$B_{x,\chi_{\mathrm{cr}}}$ is 0-1-valued. Thus,~\cite[Proposition 8.24]{Lovasz2012}
tells us that $\DIST_{1}(W'_{n},B_{x,\chi_{\mathrm{cr}}})\rightarrow0.$
Consequently, $\DIST_{1}(W{}_{n},\mathcal{C}_{x,\chi_{\mathrm{cr}}})\rightarrow0$.
\end{proof}
\begin{thm}
\label{thm:KomlosGraphon}Let $H$ be an arbitrary graph with chromatic
number at least two, and $x\in[0,1]$. Suppose that $W$ is a graphon
with minimum degree at least 
\begin{equation}
x\left(1-\frac{1}{\chi_{\mathrm{cr}}(H)}\right)+\left(1-x\right)\left(1-\frac{1}{\chi(H)-1}\right)\;.\label{eq:defdelta}
\end{equation}
Then $\FCOV(H,W)\ge\frac{x}{v(H)}$. Furthermore, if $x<1$ and $\FCOV(H,W)=\frac{x}{v(H)}$
then $W$ is a bottleneck graphon with parameters $x$ and $\chi_{\mathrm{cr}}:=\chi_{\mathrm{cr}}(H)$.\footnote{Clearly, there is no uniqueness for $x=1$.}
\end{thm}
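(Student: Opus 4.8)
The plan is to run the LP-duality strategy sketched in the introduction. Write $r=\chi(H)$, $h=v(H)$, and let $\ell$ be the size of a smallest colour class over all proper $r$-colourings of $H$, so $\chi_{\mathrm{cr}}(H)=\frac{(r-1)h}{h-\ell}$ and the quantity in~(\ref{eq:defdelta}) equals $\frac{(r-2)h+x\ell}{(r-1)h}$, which is strictly increasing in $x$. Both assertions then follow from the \emph{master bound}
\[
\delta(W)\;\le\;\frac{(r-2)+\ell\cdot\FCOV(H,W)}{r-1}
\]
valid for every graphon $W$, \emph{plus} the claim that equality here (when $\FCOV(H,W)<\tfrac1h$) forces $W$ to be a bottleneck graphon: chaining with $\delta(W)\ge$~(\ref{eq:defdelta}) and monotonicity gives $\FCOV(H,W)\ge\tfrac xh$, and if $\FCOV(H,W)=\tfrac xh$ with $x<1$ the two bounds squeeze $\delta(W)$ to equality. (Here $\FCOV(H,W)=\TIL(H,W)$ by Theorem~\ref{thm:LPdualityGraphons}, and $\FCOV(H,W)\le\tfrac1h$ always, witnessed by the constant cover $\tfrac1h$.) It is convenient to first reduce to $H$ complete multipartite: writing $K=K_{m_1,\dots,m_r}$ for the complete multipartite graph on the colour-class sizes, $E(H)\subseteq E(K)$ gives $W^{\otimes K}\le W^{\otimes H}$ and hence $\FCOV(H,W)\ge\FCOV(K,W)$, while $r$, $h$, $\ell$, $\chi_{\mathrm{cr}}$ and the bottleneck graphons agree for $H$ and $K$; so it suffices to prove the master bound for complete multipartite graphs.

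I would dispose of $\FCOV(H,W)=0$ first: by Proposition~\ref{prop:tilingVSdensity} and~(\ref{eq:fcovattained}) this means $\int W^{\otimes H}=0$, and then either $\delta(W)<\tfrac{r-2}{r-1}$ (master bound trivial, since its right-hand side is $\ge\tfrac{r-2}{r-1}$) or $\delta(W)\ge1-\tfrac1{r-1}$, in which case Theorem~\ref{thm:graphonErdosStoneSimonovits} makes $W$ an $(r-1)$-partite Turán graphon — precisely a bottleneck graphon with $x=0$ — and the master bound holds with equality.

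For the main case put $\gamma:=\FCOV(H,W)>0$, assume $\delta(W)>\tfrac{r-2}{r-1}$ (else trivial), fix an optimal cover $\mathfrak c$ with $\|\mathfrak c\|=\gamma$ (using~(\ref{eq:fcovattained})), fix a colour class $C$ with $|C|=\ell$, and set $H^-:=H-C$ (so $\chi(H^-)\le r-1$) and $Z:=\{x:\mathfrak c(x)=0\}$. The \emph{cheapest-cover principle} says that since the $\ell$ vertices of $C$ can together contribute less than $\ell\cdot\tfrac1\ell=1$ to a cover value when each sits in $\{\mathfrak c<\tfrac1\ell\}$, there is no positive-measure family of embeddings of $H$ placing $H^-$ inside $Z$ and $C$ inside $\{\mathfrak c<\tfrac1\ell\}$; in particular $\int(W|_Z)^{\otimes H}=0$, and blowing up any $K_r$-pattern inside $Z$ into a copy of $K_{m_1,\dots,m_r}\supseteq H$ shows $W|_Z$ is $K_r$-free. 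By the graphon Turán theorem $\int_{Z\times Z}W\le(1-\tfrac1{r-1})\nu(Z)^2$, and averaging $\deg_W$ over $Z$ together with $\deg_W(v)\le\int_ZW(v,\cdot)+\nu(\Omega\setminus Z)$ yields $\delta(W)\le1-\tfrac{\nu(Z)}{r-1}$. The remaining — and, I expect, hardest — step is the sharp mass estimate $\nu(Z)\ge1-\ell\gamma$, equivalently $\nu(\SUPPORT\mathfrak c)\le\ell\|\mathfrak c\|$: this is exactly where the constant $\ell$ (rather than $h$ or $r$) is extracted, and it requires more than the trivial thresholding at $\tfrac1h$. I would get it from a careful analysis of the optimal cover — showing, via an exchange/rounding argument (or complementary slackness with an optimal tiling), that $\mathfrak c$ may be taken with $\mathfrak c\in\{0\}\cup[\tfrac1\ell,1]$ almost everywhere — which in the non-metrizable weak$^{*}$ topology on $\mathcal{L}^{\infty}$ is made rigorous through the sequential Banach--Alaoglu Theorem~\ref{thm:BanachAlauglou} and Lemma~\ref{lem:limitofindependent}, using separability of $\mathcal{L}^{1}$. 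Combining, $\delta(W)\le1-\tfrac{1-\ell\gamma}{r-1}$, the master bound.

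For the equality case ($\FCOV(H,W)=\tfrac xh<\tfrac1h$, $\delta(W)\ge$~(\ref{eq:defdelta})) every inequality above is tight, so $\nu(\Omega\setminus Z)=\ell\gamma$, $W$ equals $1$ a.e.\ between $Z$ and $\Omega\setminus Z$, and — since equality also forces $\delta(W|_Z)\ge1-\tfrac1{r-1}$ — Theorem~\ref{thm:graphonErdosStoneSimonovits} applied to $W|_Z$ makes it an exact $(r-1)$-partite Turán graphon with equal-sized parts; a check that the part measures match Definition~\ref{def:bottleneckgraphon} (again using the identity (\ref{eq:defdelta})$=\frac{(r-2)h+x\ell}{(r-1)h}$) identifies $W$ as a bottleneck graphon with parameters $x$ and $\chi_{\mathrm{cr}}$. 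Besides the mass estimate, the other delicate point is running this rigidity argument — and the weak$^{*}$ compactness it rests on — carefully in the graphon formalism, which is the analytic overhead the paper warns about.
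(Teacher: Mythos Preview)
Your master bound $\delta(W)\le\frac{r-2+\ell\,\FCOV(H,W)}{r-1}$ is \emph{false} as stated. Take $H=K_r$ (so $h=r$, $\ell=1$) and $W\equiv 1$. Then $\FCOV(K_r,W)=\TIL(K_r,W)=\tfrac1r$, but the bound asserts $1=\delta(W)\le\frac{r-2+1/r}{r-1}<1$. The same example kills the ``sharp mass estimate'' $\nu(Z)\ge 1-\ell\gamma$: for $W\equiv 1$ every $H$-cover has $\nu(\{\mathfrak c=0\})=0$ (any positive-measure zero-set hosts uncovered copies of $H$), while $1-\ell\gamma=1-\ell/h>0$. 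So no optimal cover is $\{0\}\cup[\tfrac1\ell,1]$-valued here, and the exchange/rounding you gesture at cannot produce one. Restricting to $\gamma<\tfrac1h$ does make the master bound \emph{equivalent} to the theorem (and hence true), but your route to it --- Tur\'an on $Z$ plus the mass estimate --- still hinges entirely on the unproved structural claim about optimal covers, which you yourself flag as the hardest step and leave as a sketch. Nothing in your argument explains why $\gamma<\tfrac1h$ should force $\mathfrak c$ to avoid $(0,\tfrac1\ell)$; indeed the paper only obtains that range restriction (its Claim~\ref{claim:EssentialRange}) \emph{after} the main inequality is already proved, as part of the stability analysis, not as an input to it.

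The paper's proof of the main bound is quite different in architecture. Rather than isolating the zero-set of an optimal cover and applying Tur\'an there, it works with an \emph{arbitrary} cover $\mathfrak c$ and builds a nested chain $A_1\supset A_2\supset\cdots\supset A_r$ by repeatedly passing to near-minimisers of $\mathfrak c$ and intersecting with (approximate) common neighbourhoods. This produces numbers $\alpha_i=\mathrm{essinf}\,\mathfrak c|_{A_i}$ with $\alpha_1\le\cdots\le\alpha_r$, a measure bound $\nu(A_{i+1})\gtrsim 1-i(1-\delta)$, and the cover inequality $\sum_j\ell_j\alpha_j\ge 1-o(1)$ coming from a copy of $H$ planted across the $A_i$'s. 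A direct computation combining these (Abel summation on $\sum\nu(A_i)(\alpha_i-\alpha_{i-1})$) then lower-bounds $\int\mathfrak c$ by $\tfrac{x}{h}-o(1)$. The point is that the paper never needs to know where $\mathfrak c$ vanishes or what its range is --- it tracks the full profile of values through the $\alpha_i$'s, and this is exactly what lets the argument go through uniformly, including at $\gamma=\tfrac1h$.
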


The proof of Theorem~\ref{thm:KomlosGraphon} occupies Section~\ref{sec:ProofKomlos}.
Let us now employ the transference results from Section~\ref{sec:TilingsGraphons}
to see that Theorem~\ref{thm:KomlosGraphon} indeed implies Theorem~\ref{thm:KomlosStability}.
\begin{proof}[Proof of Theorem~\ref{thm:KomlosStability}]
We first prove the main assertion, and leave the ``furthermore''
part for later. Suppose that $(G_{n})_{n}$ is a sequence of graphs
with 
\begin{equation}
\delta(G_{n})\ge\left(x\left(1-\frac{1}{\chi_{\mathrm{cr}}(H)}\right)+\left(1-x\right)\left(1-\frac{1}{\chi(H)-1}\right)\right)v(G_{n})\label{eq:Gnmindeg}
\end{equation}
whose orders tend to infinity for some fixed $x>0$ and a finite graph
$H$. Let $W$ be a graphon that is an accumulation point of this
sequence with respect to the cut-distance. Then the minimum degree
of $W$ is at least $x\left(1-\frac{1}{\chi_{\mathrm{cr}}(H)}\right)+\left(1-x\right)\left(1-\frac{1}{\chi(H)-1}\right)$
by Lemma~\ref{lem:mindegsemicontinuous}. Thus Theorem~\ref{thm:KomlosGraphon}
tells us that $\FCOV(H,W)\ge\frac{x}{v(H)}$. Then Theorems~\ref{thm:lowersemicontgraphs}
and~\ref{thm:LPdualityGraphons} imply that $\liminf_{n}\frac{\TIL(H,G_{n})}{v(G_{n})}\ge\TIL(H,W)=\FCOV(H,W)$,
as needed. 

Let us now move to the ``furthermore'' part of the statement. Suppose
that $(G_{n})_{n}$ is a sequence of graphs whose orders tend to infinity
which satisfies~(\ref{eq:Gnmindeg}) for some fixed $x>0$ and a
finite graph $H$. Suppose that for each $\delta>0$, when $n$ is
sufficiently large, we have that $\TIL(H,G_{n})\le\frac{x+\delta}{v(H)}\cdot n$.
Let us now pass to any limit graphon $W$. We have $\delta(W)\ge x\left(1-\frac{1}{\chi_{\mathrm{cr}}(H)}\right)+\left(1-x\right)\left(1-\frac{1}{\chi(H)-1}\right)$
and, by Theorems~\ref{thm:lowersemicontgraphs} and~\ref{thm:LPdualityGraphons},
we have that $\TIL(H,W)\le\frac{x}{v(H)}$. Theorem~\ref{thm:KomlosGraphon}
tells us that $W$ must be a bottleneck graphon with parameters $x$
and $\chi_{\mathrm{cr}}(H)$. We conclude, that for large enough $n$,
the graph $G_{n}$ is $\epsilon$-close in the cut-distance to a bottleneck
graph with parameters $x$ and $\chi_{\mathrm{cr}}(H)$. Furthermore,
by Lemma~\ref{lem:bottleEditVsCut}, we can actually infer $\epsilon$-closeness
in the edit distance, as was needed.
\end{proof}

\section{Proof of Theorem~\ref{thm:KomlosGraphon}\label{sec:ProofKomlos}}

In Section~\ref{subsec:nonstabilityKomlos} we prove the main part
of the statement, and in Section~\ref{subsec:stabilityKomlos} we
refine our arguments to get the stability asserted in the ``furthermore''
part. Prior to each of these two section, an overview of the proof
is given.

Throughout the section, we shall work with ``slices of $W$'', i.e.,
one-variable functions $W(x,\cdot)$ for some fixed $x\in\Omega$.
Recall that measurability of $W(\cdot,\cdot)$ gives that $W(x,\cdot)$
is measurable for almost every $x\in\Omega$. We shall assume that
$W(x,\cdot)$ is measurable for every $x\in\Omega$. This is only
for the sake of notational simplicity; in the formal proofs we would
first take away the exceptional set of $x$'s.

Let us write $\delta=\delta(W)$.

Let us first deal with the case $x=0$. Then the only non-trivial
assertion in Theorem~\ref{thm:KomlosGraphon} is the stability. So,
suppose that the conditions of the theorem are fulfilled with $x=0$,
and we have $\FCOV(H,W)=0$. Then Theorem~\ref{thm:LPdualityGraphons}
and Proposition~\ref{prop:tilingVSdensity} tell us that $\int_{\Omega^{V(H)}}W^{\otimes H}=0$.
Recall that $\delta\ge1-\frac{1}{\chi(H)-1}$ by~(\ref{eq:defdelta}).
The Erd\H{o}s\textendash Stone\textendash Simonovits Stability Theorem~\ref{thm:graphonErdosStoneSimonovits}
tells us that $W$ must be a $\chi(H)$-partite Turán graphon. By
Definition~\ref{def:bottleneckgraphon}, this is equivalent to being
a bottleneck graphon with parameters $0$ and $\chi_{\mathrm{cr}}(H)$,
which was to be proven.

Thus, throughout the remainder of the proof, we shall assume that
$x$ is positive.

\subsection{Overview of the proof of the main part of the statement\label{subsec:overviewMain}}

Here, we provide an overview of the proof of the main part of Theorem~\ref{thm:KomlosGraphon}.
The proof itself, as written in Section~\ref{subsec:nonstabilityKomlos}
requires to deal with several technicalities stemming from our infinitesimal
approach to the problem (e.g., infima need not be attained). To separate
these technicalities from the key ideas, in this overview we shall
assume that $\Omega$ is a \emph{finite }probability space, $\Omega=\{\omega_{1},\ldots,\omega_{z}\}$.
(We shall assume that each $\omega_{j}$ has positive measure.) The
reader can then view $W$ as a finite cluster graph with ``clusters''
$\omega_{1},\ldots,\omega_{z}$. (The clusters are not required to
have the same size.) In this overview, we try to make use of this
analogy and explain the ideas behind our proof from the Regularity
lemma perspective. We essentially use the same notation as in Section~\ref{subsec:nonstabilityKomlos};
the only difference is that our objects are simpler due to the discrete
setting. That is, in the actual execution of the proof in Section~\ref{subsec:nonstabilityKomlos},
we will have to incorporate small additional error parameters to the
setting. We comment on the differences at the end of this overview.

Among all proper colourings of $H$ with $r=\chi(H)$ colours consider
one that minimizes the size of the smallest colour class and let $V(H)=V_{1}\sqcup V_{2}\sqcup\ldots\sqcup V_{r}$
be the partition of the vertex set into the colour classes of this
colouring such that $\ell_{1}\ge\ell_{2}\ge\ldots\ge\ell_{r}>0,$
for $\ell_{i}=|V_{i}|$. Let $h=\sum_{i}\ell_{i}$ be the order of~$H$.
Let $\mathfrak{c}:\Omega\rightarrow[0,1]$ be an arbitrary fractional
$H$-cover of $W$. Notice that Definition~\ref{def:covergraphon}
is consistent with the usual graph-theoretic definition of a fractional
cover when the target $W$ is viewed as a finite graph (``cluster
graph''). However, we emphasize that this corresponding graph-theoretic
definition of a fractional cover is about homomorphisms rather than
copies. That is, the requirement is that
\begin{equation}
\sum_{k=1}^{h}\mathfrak{c}(x_{k})\ge1\;,\label{eq:Piaf}
\end{equation}
whenever $\left\{ x_{v}\in\{\omega_{1},\ldots,\omega_{z}\}\right\} _{v\in V(H)}$
is an $h$-tuple of not necessarily different clusters with the property
that $W(x_{u},x_{v})>0$ for each $uv\in E(H)$. This definition makes
sense even if not all the clusters $x_{v}$ are distinct as regularity
embedding techniques allow us to embed $H$ into the corresponding
collection of clusters even in this setting.

We need to show that $\int_{\Omega}\mathfrak{c}\ge\frac{x}{v(H)}$.
To get such a lower-bound, we start focusing on those parts of $\Omega$
where the value of $\mathfrak{c}$ is small. More precisely, our idea
is to take a cluster $B_{1}\in\{\omega_{1},\ldots,\omega_{z}\}$ with
the smallest value of $\mathfrak{c}$. Then, having defined the clusters
$B_{1},\ldots,B_{i}$ (for some $i<r$), we take $B_{i+1}\in\{\omega_{1},\ldots,\omega_{z}\}$
to be the cluster that has the smallest value of $\mathfrak{c}$ in
the common neighborhood of $B_{1},\ldots,B_{i}$. Notice that since
our minimum-degree is bigger than $1-\frac{1}{r-1}$, these common
neighborhood are indeed nonempty. In particular, the clusters $B_{1},B_{2},\ldots,B_{r}$
form a copy of $K_{r}$. Since by mapping the colour class $V_{i}$
of $H$ into $B_{i}$ for each $i\in[r]$ we obtain a graph homomorphism,~(\ref{eq:Piaf})
implies that 
\begin{equation}
\sum_{i=1}^{r}\ell_{i}\mathfrak{c}(B_{i})\ge1\;.\label{eq:Polichinella}
\end{equation}

It can then be calculated that $\int_{\Omega}\mathfrak{c}\ge\frac{x}{v(H)}$,
as was needed.

\medskip{}

In the actual proof, the counterparts to common neighborhoods are
denoted $A_{i}$ and the counterparts to the smallest values of $\mathfrak{c}$
are denoted by $\alpha_{i}$. The extra difficulty coming from the
infinitesimal setting is that
\begin{enumerate}[label=(\emph{\alph*})]
\item the infimum $\alpha_{i}$ of $\mathfrak{c}$ on $A_{i}$ need not
be attained, and
\item \label{enu:noneighborhood}there is no notion of a ``cluster'',
neighborhood of which could be taken.
\end{enumerate}
A lower bound that implies that the actual sets $A_{i}$ are nonempty
is given in Claim~\ref{claim:nuAi}. In Claim~\ref{claim:IntegralFFFFFpositive}
we then show that the actual sets $B_{i}$ are indeed ``pairwise
adjacent'', thus providing a counterpart to~\ref{enu:noneighborhood}.
In Claim~\ref{claim:cover} we prove a counterpart of~(\ref{eq:Polichinella}).
These facts can be used to deduce that $\int_{\Omega}\mathfrak{c}\ge\frac{x}{v(H)}$
in a relatively straightforward way. 

\subsection{The main part of the statement\label{subsec:nonstabilityKomlos}}

We start the proof with a simple auxiliary claim.
\begin{claim}
\label{claim:normFlarge}Suppose that $t>0$, $f\in\mathcal{L}^{\infty}(\Omega)$,
$0\le f\le1$ is such that 
\[
\nu\left\{ w\in\Omega:\left\Vert W(w,\cdot)-f\right\Vert _{1}<t\right\} >0\;.
\]
Then $\|f\|_{1}\ge\delta-t$.
\end{claim}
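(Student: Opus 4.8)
The plan is to reduce the statement to a single point: from the hypothesis that the set $S=\{x\in\Omega:\|W(x,\cdot)-f\|_{1}<t\}$ has positive measure, I would extract one point $x_{0}\in S$ at which the minimum-degree bound is also valid, and then simply compare $\int f$ with the slice $\int_{\Omega}W(x_{0},\cdot)$ using the triangle inequality in $\mathcal{L}^{1}$.

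In detail: since $\delta=\ESSINF\deg_{W}$, the set $N=\{x\in\Omega:\deg_{W}(x)<\delta\}$ is null. Because $\nu(S)>0$, the set $S\setminus N$ is nonempty, so I may fix any $x_{0}\in S\setminus N$. For this $x_{0}$ we have $\int_{\Omega}W(x_{0},\cdot)\,\mathrm{d}\nu=\deg_{W}(x_{0})\ge\delta$ and $\|W(x_{0},\cdot)-f\|_{1}<t$. Using $f\ge0$ (so that $\|f\|_{1}=\int_{\Omega}f\,\mathrm{d}\nu$) together with the triangle inequality,
\[
\|f\|_{1}=\int_{\Omega}f\,\mathrm{d}\nu\ge\int_{\Omega}W(x_{0},\cdot)\,\mathrm{d}\nu-\int_{\Omega}\bigl|W(x_{0},\cdot)-f\bigr|\,\mathrm{d}\nu=\deg_{W}(x_{0})-\|W(x_{0},\cdot)-f\|_{1}>\delta-t\,,
\]
which yields the asserted bound (in fact with a strict inequality).

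There is essentially no real obstacle here; the only point requiring a moment's care is that the degree inequality $\deg_{W}(x)\ge\delta$ holds only for almost every $x$, so one must first discard the exceptional null set $N$ before selecting the witness point $x_{0}$ — which is legitimate precisely because $S$ has nonzero measure. The measurability of the slice $W(x_{0},\cdot)$, needed for $\deg_{W}(x_{0})$ and $\|W(x_{0},\cdot)-f\|_{1}$ to be well defined, is guaranteed by the standing convention fixed at the beginning of the section.
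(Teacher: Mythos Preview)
Your proof is correct and follows essentially the same route as the paper's own proof: pick a point $x_{0}$ in the positive-measure set where both $\|W(x_{0},\cdot)-f\|_{1}<t$ and $\deg_{W}(x_{0})\ge\delta$ hold, then apply the triangle inequality in $\mathcal{L}^{1}$. You are merely more explicit about discarding the null exceptional set before choosing the witness point.
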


\begin{proof}
Recall that for almost every $w\in\Omega$, we have $\|W(w,\cdot)\|_{1}\ge\delta$.
Let us fix one such $w$ which additionally satisfies $\left\Vert W(w,\cdot)-f\right\Vert _{1}<t$.
By the triangle inequality,
\[
\|f\|_{1}\ge\|W(w,\cdot)\|_{1}-\left\Vert W(w,\cdot)-f\right\Vert _{1}\ge\delta-t\;.
\]
\end{proof}
Among all proper colourings of $H$ with $r=\chi(H)$ colours consider
one that minimizes the size of the smallest colour class and let $V(H)=V_{1}\sqcup V_{2}\sqcup\ldots\sqcup V_{r}$
be the partition of the vertex set into the colour classes of this
colouring such that $\ell_{1}\ge\ell_{2}\ge\ldots\ge\ell_{r}>0,$
for $\ell_{i}=|V_{i}|$. Let $h=\sum_{i}\ell_{i}$ be the order of~$H$.
Fix an arbitrarily small~$\gamma\in(0,1)$.

Let $\mathfrak{c}:\Omega\rightarrow[0,1]$ be an arbitrary fractional
$H$-cover of $W$. It is enough to show that $\int_{\Omega}\mathfrak{c}\ge\frac{x}{v(H)}-\gamma$.
Set 
\begin{equation}
\epsilon=\gamma\cdot\left(\frac{\delta-\left(1-\frac{1}{r-1}\right)}{3r^{2}}\right)^{4}\;.\label{eq:defepsilonmetro}
\end{equation}
The fact that $x>0$ together with~(\ref{eq:defdelta}) tells us
that $\delta>1-\frac{1}{r-1}$ and $\epsilon>0$.

Let $A_{1}=\Omega$. Sequentially, for $i=1,\ldots,r$, given sets
\[
A_{1},\ldots,A_{i},B_{1},\ldots,B_{i-1},F_{1},\ldots,F_{i-1}\subset\Omega
\]
of positive measure and numbers $\alpha_{1},\ldots,\alpha_{i-1}$,
define number $\alpha_{i}$ and sets $B_{i}$, $F_{i}$, $A_{i+1}$
as follows. Set $\alpha_{i}=\ESSINF\mathfrak{c}_{\restriction A_{i}}$,
$B_{i}=\left\{ w\in A_{i}:\mathfrak{c}(w)\le\alpha_{i}+\frac{\gamma}{h}\right\} $.
It follows that $\nu(B_{i})>0$. By the separability of the space
$\mathcal{L^{\infty}}(\Omega)$ there exists a function $f_{i}\in\mathcal{L^{\infty}}(\Omega)$,
$0\le f_{i}\le1$ such that the set $F_{i}:=\left\{ w\in B_{i}:\left\Vert W(w,\cdot)-f_{i}(\cdot)\right\Vert _{1}<\epsilon\right\} $
has positive measure. Finally, define
\begin{equation}
A_{i+1}:=\left\{ w\in A_{i}:\nu\left\{ y\in F_{i}:W(w,y)>0\right\} \ge\left(1-\sqrt[4]{\epsilon}\right)\nu(F_{i})\right\} \;.\label{eq:defAi}
\end{equation}
In order to be able to proceed with the construction for step $i+1$,
we need to show that $A_{i+1}$ has positive measure. The following
claim gives an optimal quantitative lower-bound.
\begin{claim}
\label{claim:nuAi}We have $\nu(A_{i})\ge\delta-(1-\nu(A_{i-1}))-3\cdot\sqrt[4]{\epsilon}=\nu(A_{i-1})+\delta-1-3\cdot\sqrt[4]{\epsilon}$.
\end{claim}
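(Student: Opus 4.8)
The plan is to bound the measure \emph{lost} in passing from $A_{i-1}$ to $A_{i}$. Since $A_{i}\subseteq A_{i-1}$ directly from~(\ref{eq:defAi}), writing $P:=A_{i-1}\setminus A_{i}$ the asserted inequality is equivalent to $\nu(P)\le 1-\delta+3\sqrt[4]{\epsilon}$, so this is what I would prove. Unwinding~(\ref{eq:defAi}) (with $i-1$ in the role of $i$), a point $w\in A_{i-1}$ lies outside $A_{i}$ exactly when its ``non-neighbourhood in $F_{i-1}$'', namely $Y_w:=\{y\in F_{i-1}:W(w,y)=0\}$, satisfies $\nu(Y_w)>\sqrt[4]{\epsilon}\,\nu(F_{i-1})$.

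The heart of the argument is a double-counting estimate showing that $f_{i-1}$ must be small (in an averaged sense) on $P$. For every $y\in F_{i-1}$ the defining property of $F_{i-1}$ together with the symmetry of $W$ gives $\int_{\Omega}|f_{i-1}(w)-W(y,w)|\,\mathrm{d}w=\|W(y,\cdot)-f_{i-1}\|_1<\epsilon$. Using that $W(w,y)=0$ on $Y_w$, Tonelli's theorem, and $Y_w\subseteq F_{i-1}$, I would compute
\[
\int_{P}f_{i-1}(w)\,\nu(Y_w)\,\mathrm{d}w=\int_{P}\int_{Y_w}\bigl(f_{i-1}(w)-W(y,w)\bigr)\,\mathrm{d}y\,\mathrm{d}w\le\int_{F_{i-1}}\|W(y,\cdot)-f_{i-1}\|_1\,\mathrm{d}y<\epsilon\,\nu(F_{i-1})\,.
\]
Since $\nu(Y_w)>\sqrt[4]{\epsilon}\,\nu(F_{i-1})$ for $w\in P$ and $\nu(F_{i-1})>0$, this forces $\int_{P}f_{i-1}<\epsilon^{3/4}$, whence Markov's inequality gives $\nu\{w\in P:f_{i-1}(w)\ge\sqrt[4]{\epsilon}\}\le\sqrt{\epsilon}$, and therefore $\nu(P)\le\sqrt{\epsilon}+\nu\{w\in\Omega:f_{i-1}(w)<\sqrt[4]{\epsilon}\}$.

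To close the estimate I would invoke Claim~\ref{claim:normFlarge} with $t=\epsilon$ (legitimate because $F_{i-1}\subseteq\{x:\|W(x,\cdot)-f_{i-1}\|_1<\epsilon\}$ has positive measure), which yields $\|f_{i-1}\|_1\ge\delta-\epsilon$; since $0\le f_{i-1}\le 1$ this gives $\nu\{f_{i-1}\ge\sqrt[4]{\epsilon}\}\ge\delta-\epsilon-\sqrt[4]{\epsilon}$, i.e.\ $\nu\{f_{i-1}<\sqrt[4]{\epsilon}\}\le 1-\delta+\epsilon+\sqrt[4]{\epsilon}$. Plugging this in and using $\epsilon,\sqrt{\epsilon}\le\sqrt[4]{\epsilon}$ (valid since $\epsilon<1$), I obtain $\nu(P)\le 1-\delta+\sqrt[4]{\epsilon}+\sqrt{\epsilon}+\epsilon\le 1-\delta+3\sqrt[4]{\epsilon}$, which is exactly the claim. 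The one genuinely delicate step is the displayed double-counting estimate: it is where the local information ``$W(w,\cdot)$ vanishes on a $\sqrt[4]{\epsilon}$-fraction of $F_{i-1}$'' is converted into the global information ``$f_{i-1}(w)$ is small'', and it works only because the approximation $W(y,\cdot)\approx f_{i-1}$ is in $\mathcal{L}^1$ (so it survives integration over $y\in F_{i-1}$) and because the threshold $\sqrt[4]{\epsilon}$ built into~(\ref{eq:defAi}) is tuned to dominate the accumulated $\mathcal{L}^1$-error $\epsilon$ after the Markov steps, giving the optimal main term $\delta-(1-\nu(A_{i-1}))$.
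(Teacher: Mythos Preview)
Your proof is correct and follows essentially the same approach as the paper's: both arguments hinge on the same double-counting of $\int |W(w,y)-f_{i-1}(w)|$ over $(A_{i-1}\setminus A_i)\times F_{i-1}$, combined with Claim~\ref{claim:normFlarge} to lower-bound $\nu(\mathrm{supp}_{\sqrt[4]{\epsilon}}f_{i-1})$. The only cosmetic difference is that you first bound $\int_P f_{i-1}$ and then apply Markov's inequality, whereas the paper restricts upfront to the set $A_{i-1}\cap(\mathrm{supp}_{\sqrt[4]{\epsilon}}f_{i-1})\setminus A_i$ and bounds its measure directly.
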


Before proving Claim~\ref{claim:nuAi}, we note that as an immediate
consequence of Claim~\ref{claim:nuAi}, we have that 
\begin{equation}
\nu\left(A_{i+1}\right)\ge1-i\cdot(1-\delta)-3i\cdot\sqrt[4]{\epsilon}\label{eq:AiBetter}
\end{equation}
for each $i+1\le r$. Recall that $\delta>1-\frac{1}{r-1}$ by~(\ref{eq:defdelta}),
then together with~(\ref{eq:defepsilonmetro}) we know that for $i+1\le r$,
the set $A_{i+1}$ has positive measure.
\begin{proof}[Proof of Claim~\ref{claim:nuAi}]
 We want to prove that $A_{i+1}$ contains almost all of $A_{i}\cap\left(\SUPPORTPOSITIVE_{\sqrt[4]{\epsilon}}f_{i}\right)$.
To this end, we consider the quantity
\begin{equation}
\int_{w\in A_{i}\cap\left(\SUPPORTPOSITIVE_{\sqrt[4]{\epsilon}}f_{i}\right)\setminus A_{i+1}}\int_{y\in F_{i}}\left|W(w,y)-f_{i}(w)\right|=\int_{y\in F_{i}}\int_{w\in A_{i}\cap\left(\SUPPORTPOSITIVE_{\sqrt[4]{\epsilon}}f_{i}\right)\setminus A_{i+1}}\left|W(w,y)-f_{i}(w)\right|\;.\label{eq:welovefubini}
\end{equation}
First, we consider the left-hand side of~(\ref{eq:welovefubini}).
Fix $w\in A_{i}\cap\left(\SUPPORTPOSITIVE_{\sqrt[4]{\epsilon}}f_{i}\right)\setminus A_{i+1}$.
Since $w\in\SUPPORTPOSITIVE_{\sqrt[4]{\epsilon}}f_{i}$, we have $f_{i}(w)\ge\sqrt[4]{\epsilon}$.
Since $w\not\in A_{i+1}$, we have that the sets of $y\in F_{i}$,
for which $W(w,y)=0$ has measure at least $\sqrt[4]{\epsilon}\nu(F_{i})$.
Therefore, $\int_{y\in F_{i}}\left|W(w,y)-f_{i}(w)\right|\ge\sqrt[4]{\epsilon}\cdot\sqrt[4]{\epsilon}\nu(F_{i})$.
Integrating over $w$, we get
\begin{equation}
\int_{w\in A_{i}\cap\left(\SUPPORTPOSITIVE_{\sqrt[4]{\epsilon}}f_{i}\right)\setminus A_{i+1}}\int_{y\in F_{i}}\left|W(w,y)-f_{i}(w)\right|\ge\sqrt{\epsilon}\nu\left(A_{i}\cap\left(\SUPPORTPOSITIVE_{\sqrt[4]{\epsilon}}f_{i}\right)\setminus A_{i+1}\right)\nu(F_{i})\;.\label{eq:fubini1}
\end{equation}

Next, consider the right-hand side of~(\ref{eq:welovefubini}). Fix
$y\in F_{i}$. Then
\[
\int_{w\in A_{i}\cap\left(\SUPPORTPOSITIVE_{\sqrt[4]{\epsilon}}f_{i}\right)\setminus A_{i+1}}\left|W(w,y)-f_{i}(w)\right|\le\int_{w\in\Omega}\left|W(w,y)-f_{i}(w)\right|=\|W(y,\cdot)-f_{i}(\cdot)\|_{1}\le\epsilon\;,
\]
where the last inequality uses the definition of $F_{i}$. Integrating
over $y$, we get
\begin{equation}
\int_{y\in F_{i}}\int_{w\in A_{i}\cap\left(\SUPPORTPOSITIVE_{\sqrt[4]{\epsilon}}f_{i}\right)\setminus A_{i+1}}\left|W(w,y)-f_{i}(w)\right|\le\epsilon\nu(F_{i})\;.\label{eq:fubini2}
\end{equation}
Putting~(\ref{eq:fubini1}) and~(\ref{eq:fubini2}) together, we
get that 
\[
\nu\left(A_{i}\cap\left(\SUPPORTPOSITIVE_{\sqrt[4]{\epsilon}}f_{i}\right)\setminus A_{i+1}\right)\le\sqrt{\epsilon}\;.
\]
By Claim~\ref{claim:normFlarge} and the definition of $f_{i},$
we have $\|f_{i}\|_{1}\ge\delta-\epsilon$, therefore the set $\SUPPORTPOSITIVE_{\sqrt[4]{\epsilon}}f_{i}$
has measure at least $\delta-\epsilon-\sqrt[4]{\epsilon}\ge\delta-2\sqrt[4]{\epsilon}$.
Plugging these estimates into
\[
\nu(A_{i+1})\ge\nu(A_{i})-\left(1-\nu\left(\SUPPORTPOSITIVE_{\sqrt[4]{\epsilon}}f_{i}\right)\right)-\nu\left(A_{i}\cap\left(\SUPPORTPOSITIVE_{\sqrt[4]{\epsilon}}f_{i}\right)\setminus A_{i+1}\right)\;,
\]
we get the desired result.
\end{proof}
Having defined the sets $A_{1},\ldots,A_{r}$,$B_{1},\ldots,B_{r}$
and $F_{1},\ldots,F_{r}$, we want to proceed with getting control
on the numbers $\alpha_{1},\ldots,\alpha_{r}$. The following claim
is crucial to this end.
\begin{claim}
\label{claim:IntegralFFFFFpositive}We have that
\[
\int_{F_{1}\times\ldots\times F_{r}}W^{\otimes K_{r}}>0\;.
\]
\end{claim}

\begin{proof}
Note that
\[
\int_{x_{r}\in F_{r}}\int_{x_{r-1}\in F_{r-1}}\cdots\int_{x_{1}\in F_{1}}W^{\otimes K_{r}}(x_{1},\ldots x_{r})=\int_{x_{r}\in F_{r}}\int_{x_{r-1}\in N(x_{r})\cap F_{r-1}}\cdots\int_{x_{1}\in N(x_{r},x_{r-1},\ldots,x_{2})\cap F_{1}}W^{\otimes K_{r}}(x_{1},\ldots x_{r})\;.
\]
The advantage of rewriting the integral in this way is that the integrand
on the right-hand side is positive for every choice of $x_{r},\ldots,x_{1}$.
So, we only need to show that we are integrating over a set of positive
measure. Indeed, suppose that numbers $x_{r}\in F_{r}$, $x_{r-1}\in N\left(x_{r}\right)\cap F_{r-1}$,
$\ldots$, $x_{r-i}\in N\left(x_{r},\ldots,x_{r-i+1}\right)\cap F_{r-i}$
were given. It is our task to show that the measure of $N\left(x_{r},\ldots,x_{r-i}\right)\cap F_{r-i-1}$
is positive. To this end, we use that $x_{r},\ldots,x_{r-i}\in A_{r-i}$.
Then~(\ref{eq:defAi}) tells us that 
\[
\nu\left(N(x_{r})\cap F_{r-i-1}\right)\;,\;\nu\left(N(x_{r-1})\cap F_{r-i-1}\right)\;,\;\ldots\;,\;\nu\left(N(x_{r-i})\cap F_{r-i-1}\right)\;\ge\;(1-\sqrt[4]{\epsilon})\nu(F_{r-i-1})\;.
\]
We conclude that 
\[
\nu(N(x_{r},\ldots,x_{r-i})\cap F_{r-i-1})\ge(1-(i+1)\sqrt[4]{\epsilon})\nu(F_{r-i-1})>0\;,
\]
as was needed.
\end{proof}
The advertised gain of control on the numbers $\alpha_{1},\ldots,\alpha_{r}$
now follows easily.
\begin{claim}
\label{claim:cover}We have 
\begin{equation}
\ell_{1}\alpha_{1}+\ell_{2}\alpha_{2}+\ldots+\ell_{r}\alpha_{r}\ge1-\gamma.\label{eq:cover}
\end{equation}
\end{claim}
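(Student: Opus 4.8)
The plan is to exhibit a single ``copy of $H$'' all of whose vertices lie in the sets $F_1,\dots,F_r$ and to feed it into the defining inequality of the fractional $H$-cover $\mathfrak{c}$. Precisely, suppose we can find a tuple $\mathbf{x}=(x_v)_{v\in V(H)}$ with $x_v\in F_i$ whenever $v\in V_i$, with $W^{\otimes H}(\mathbf{x})>0$, and with $\sum_{v\in V(H)}\mathfrak{c}(x_v)\ge1$. Since $F_i\subseteq B_i$, each $x_v$ with $v\in V_i$ satisfies $\mathfrak{c}(x_v)\le\alpha_i+\beta$, so grouping the sum over colour classes and using $\beta=\gamma/h$ and $\sum_i\ell_i=h$ gives
\[
1\le\sum_{v\in V(H)}\mathfrak{c}(x_v)\le\sum_{i=1}^{r}\ell_i(\alpha_i+\beta)=\sum_{i=1}^{r}\ell_i\alpha_i+\beta h=\sum_{i=1}^{r}\ell_i\alpha_i+\gamma ,
\]
which is precisely~(\ref{eq:cover}). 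From now on we assume $\gamma$ was chosen small enough that $h\sqrt[4]{\epsilon}<1$; this is harmless, since $\gamma$ is arbitrary and by~(\ref{eq:defepsilonmetro}) we have $\epsilon\to0$ as $\gamma\to0$.

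To produce the tuple I would first prove the $H$-analogue of Claim~\ref{claim:IntegralFFFFFpositive}, namely that $\int_{F_1^{\ell_1}\times\cdots\times F_r^{\ell_r}}W^{\otimes H}>0$, where the block $F_i^{\ell_i}$ is occupied by the coordinates indexed by $V_i$ (Claim~\ref{claim:IntegralFFFFFpositive} itself being the case $H=K_r$). Its proof copies that of Claim~\ref{claim:IntegralFFFFFpositive}: order $V(H)$ so that the colour classes are exhausted in the order $V_r,V_{r-1},\dots,V_1$, and rewrite the iterated integral by restricting, at the step where a vertex $v\in V_i$ is integrated, its domain to $N\bigl(\{x_u:u\text{ an already-processed neighbour of }v\}\bigr)\cap F_i$. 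On the resulting iterated domain $W^{\otimes H}$ is strictly positive, since for every edge $uv$ of $H$ the endpoint processed later was confined to the support of $W$ against the earlier one. It remains to check that each such domain is non-null: because $V_i$ is an independent set of $H$, every already-processed neighbour $u$ of $v\in V_i$ lies in some class $V_j$ with $j>i$, hence $x_u\in F_j\subseteq A_j\subseteq A_{i+1}$, and then~(\ref{eq:defAi}) yields $\nu\{y\in F_i:W(x_u,y)=0\}\le\sqrt[4]{\epsilon}\,\nu(F_i)$; as there are fewer than $h$ such neighbours, the domain has measure at least $(1-h\sqrt[4]{\epsilon})\nu(F_i)>0$.

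Granting this integral is positive, the set $S$ of tuples $\mathbf{x}\in F_1^{\ell_1}\times\cdots\times F_r^{\ell_r}$ with $W^{\otimes H}(\mathbf{x})>0$ has positive $\nu^{h}$-measure, and $S\subseteq\SUPPORT W^{\otimes H}$; by Definition~\ref{def:covergraphon} the set of points of $\SUPPORT W^{\otimes H}$ where $\sum_v\mathfrak{c}(x_v)<1$ is null, so $S$ contains a tuple with $\sum_v\mathfrak{c}(x_v)\ge1$, and the first paragraph finishes the argument. The only genuine obstacle is the $H$-analogue of Claim~\ref{claim:IntegralFFFFFpositive}; but as noted this is essentially a re-run of the already-established claim, the sole new point being that one integration step now imposes up to $h$ rather than $r$ neighbourhood constraints, which is exactly why we need $\gamma$ small enough to keep $h\sqrt[4]{\epsilon}<1$. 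The rest is routine bookkeeping.
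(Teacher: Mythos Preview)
Your argument is correct and follows the paper's route: show that the $H$-integral over $F_1^{\ell_1}\times\cdots\times F_r^{\ell_r}$ is positive and then combine the fractional-cover inequality with the bound $\mathfrak{c}\le\alpha_i+\beta$ on $F_i\subset B_i$. The paper is merely terser at the first step, deducing positivity of the $H$-integral from Claim~\ref{claim:IntegralFFFFFpositive} via the remark ``since $H$ is $r$-colorable'' (a blow-up of $K_r$ contains $H$, and blowing up preserves positive density over $F_1\times\cdots\times F_r$ --- this in fact sidesteps your extra assumption $h\sqrt[4]{\epsilon}<1$), whereas you re-run the proof of Claim~\ref{claim:IntegralFFFFFpositive} directly for $H$.
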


\begin{proof}
Claim~\ref{claim:IntegralFFFFFpositive} gives that $\int_{F_{1}\times\ldots\times F_{r}}W^{\otimes K_{r}}>0$.
Since $H$ is $r$-colorable, and since $F_{i}\subset B_{i}$, we
also have that 
\begin{equation}
\int_{\left(B_{1}\right)^{\ell_{1}}}\int_{\left(B_{2}\right)^{\ell_{2}}}\ldots\int_{\left(B_{r-1}\right)^{\ell_{r-1}}}\int_{\left(B_{r}\right)^{\ell_{r}}}W^{\otimes H}>0\;.\label{eq:intH}
\end{equation}
Recall that for each $w\in B_{i}$, $\mathfrak{c}(w)\le\alpha_{i}+\frac{\gamma}{h}$.
Thus, for each $\mathbf{w}\in\prod_{j}\left(B_{j}\right)^{\ell_{j}}$,
we have 
\[
\sum_{i=1}^{h}\mathfrak{c}(\mathbf{w}_{i})\le\sum_{j=1}^{r}\left(\alpha_{j}+\frac{\gamma}{h}\right)\ell_{j}=\gamma+\sum_{j=1}^{r}\ell_{j}\alpha_{j}\;.
\]
Combining~(\ref{eq:intH}) with the fact that $\mathfrak{c}$ is
a fractional $H$-cover, we get~(\ref{eq:cover}).
\end{proof}
Observe that 
\begin{eqnarray}
\int_{\Omega}\mathfrak{c} & \ge & \nu(A_{r})\alpha_{r}+\left(\nu(A_{r-1})-\nu(A_{r})\right)\alpha_{r-1}+\ldots+\left(\nu(A_{1})-\nu(A_{2})\right)\alpha_{1}\label{eq:cover1}\\
 & = & \sum_{i=2}^{r}\nu(A_{i})(\alpha_{i}-\alpha_{i-1})+\alpha_{1\;}.\nonumber 
\end{eqnarray}
Using~(\ref{eq:AiBetter}) and~(\ref{eq:cover1}) we obtain 
\[
\int_{\Omega}\mathfrak{c}\ge\sum_{i=2}^{r}\nu(A_{i})\left(\alpha_{i}-\alpha_{i-1}\right)+\alpha_{1}\ge\alpha_{1}+\sum_{i=2}^{r}\left(1-(i-1)(1-\delta)-3(i-1)\sqrt[4]{\epsilon}\right)\left(\alpha_{i}-\alpha_{i-1}\right)\,.
\]
Combined with the observation that $\sum_{i=2}^{r}\left(\alpha_{i}-\alpha_{i-1}\right)=\alpha_{r}-\alpha_{1}$,
we get
\begin{align}
\int_{\Omega}\mathfrak{c} & \ge\alpha_{r}+(\delta-1-3\sqrt[4]{\epsilon})\left(\sum_{i=2}^{r}(i-1)\left(\alpha_{i}-\alpha_{i-1}\right)\right).\nonumber \\
 & =\alpha_{r}+(\delta-1-3\sqrt[4]{\epsilon})\left((r-1)\alpha_{r}-\sum_{i=1}^{r-1}\alpha_{i}\right).\label{eq:cover2}
\end{align}
Recall that $\delta=1+x\left(\frac{1}{r-1}-\frac{1}{\chi_{\mathrm{cr}}(H)}\right)-\frac{1}{r-1}$.
Plugging this equality in~(\ref{eq:cover2}) we obtain

\begin{eqnarray}
\int_{\Omega}\mathfrak{c} & \ge & \alpha_{r}+\left(\frac{x}{r-1}-\frac{x}{\chi_{\mathrm{cr}}}-\frac{1}{r-1}-3\sqrt[4]{\epsilon}\right)\left((r-1)\alpha_{r}-\sum_{i=1}^{r-1}\alpha_{i}\right)\nonumber \\
 & \overset{\eqref{eq:cover}}{\ge} & \underbrace{\sum_{i=1}^{r-1}\frac{\alpha_{i}}{r-1}-3\sqrt[4]{\epsilon}(r-1)}_{\textsf{(R1)}}\nonumber \\
 &  & +\underbrace{\left(\frac{x}{r-1}-\frac{x}{\chi_{\mathrm{cr}}}\right)\left[\frac{r-1}{\ell_{r}}\left(1-\sum_{i=1}^{r-1}\ell_{i}\alpha_{i}-\gamma\right)-\sum_{i=1}^{r-1}\alpha_{i}\right]}_{\textsf{(R2)}}\,,\label{eq:intermediate1}
\end{eqnarray}
where we use the fact $\alpha_{r}\le1$ to get $\textsf{(R1)}$ and
use (\ref{eq:cover}) to get $\textsf{(R2)}$. Using Definition~\ref{def:chiCR},
we infer that 

\begin{equation}
\frac{x}{r-1}-\frac{x}{\chi_{\mathrm{cr}}}=x\left(\frac{1}{r-1}-\frac{h-\ell_{r}}{(r-1)h}\right)=\frac{x\ell_{r}}{(r-1)h}\,.\label{eq:weinferthat}
\end{equation}
This allows us to express the term $\textsf{(R2)}$ in~(\ref{eq:intermediate1})
as

\begin{equation}
\text{\textsf{(R2)}=}\frac{x}{h}(1-\gamma)-\frac{x}{(r-1)h}\sum_{i=1}^{r-1}\alpha_{i}\left((r-1)\ell_{i}+\ell_{r}\right)\cdot\label{eq:withX}
\end{equation}
The term $\textsf{(R1)}$ from~(\ref{eq:intermediate1}) can be decomposed
as follows:

\begin{equation}
\textsf{(R1)}=\frac{x}{(r-1)h}\sum_{i=1}^{r-1}\alpha_{i}h+\frac{1-x}{r-1}\sum_{i=1}^{r-1}\alpha_{i}-3\sqrt[4]{\epsilon}(r-1)\,.\label{eq:withoutX}
\end{equation}
Plugging the equalities~(\ref{eq:defepsilonmetro}),~(\ref{eq:withX})
and~(\ref{eq:withoutX}) in~(\ref{eq:intermediate1}) and using
the fact that $h=\sum_{i}\ell_{i}$ we get
\begin{eqnarray}
\int_{\Omega}\mathfrak{c} & = & \frac{x}{h}(1-\gamma)+\frac{x}{(r-1)h}\sum_{i=1}^{r-1}\alpha_{i}\left(h-\ell_{r}-(r-1)\ell_{i}\right)+\frac{1-x}{r-1}\sum_{i=1}^{r-1}\alpha_{i}-\sqrt[4]{\gamma}\nonumber \\
 & = & \frac{x}{h}(1-\gamma)+\frac{x}{(r-1)h}\underbrace{\sum_{i=1}^{r-1}\left(\alpha_{i}\sum_{j=1}^{r-1}\left(\ell_{j}-\ell_{i}\right)\right)}_{\mathsf{(T1)}}+\underbrace{\frac{1-x}{r-1}\sum_{i=1}^{r-1}\alpha_{i}}_{\mathsf{(T2)}}-\sqrt[4]{\gamma}\,.\label{eq:miss}
\end{eqnarray}
Let us expand the term \textsf{(T1)}. 
\begin{eqnarray*}
\sum_{i=1}^{r-1}\alpha_{i}\left(\sum_{j=1}^{r-1}\left(\ell_{j}-\ell_{i}\right)\right) & = & \sum_{i=1}^{r-1}\alpha_{i}\left[\sum_{1\le j<i}\left(\ell_{j}-\ell_{i}\right)+\sum_{i<j\le r-1}\left(\ell_{j}-\ell_{i}\right)\right]\\
 & = & \sum_{i=1}^{r-1}\sum_{j<i}(\ell_{j}-\ell_{i})(\alpha_{i}-\alpha_{j})\,.
\end{eqnarray*}
Recall that for ~$j<i$, we have~$\ell_{j}\ge\ell_{i}$ and~$\alpha_{j}\le\alpha_{i}$.
So, \textsf{(T1)} is non-negative. As~$x\le1$, we have that \textsf{(T2)}
is non-negative as well. As~$\gamma>0$ is arbitrarily small, we
obtain that $\int_{\Omega}\mathfrak{c}\ge\frac{x}{h}$ for any fractional
$H$-cover $\mathfrak{c}$. 

\subsection{Overview of the proof of the furthermore part of the statement}

Before describing the proof, let us make some observations about the
bottleneck graphon (structure of which we want to force). The only
fractional $H$-cover $\mathfrak{c}$ which satisfies $\int_{\Omega}\mathfrak{c}\le\frac{x}{v(H)}$
is constant~0 almost everywhere on $\Omega_{1}\cup\ldots\cup\Omega_{r-1}$
(using notation as described in Definition~\ref{def:bottleneckgraphon})
and constant~$\nicefrac{1}{\ell_{r}}$ almost everywhere on $\Omega_{r}$.
Also, in the idealized/discretized setting of Section~\ref{subsec:overviewMain},
the sets $A_{1}$, $A_{2}$, \ldots{}, $A_{r}$ would start with
$A_{1}=\Omega$ and then each $A_{i+1}$ would be obtained from $A_{i}$
by subtracting one set $\Omega_{\pi(i)}$ for one (but arbitrary)
permutation $\pi(1),\pi(2),\ldots,\pi(r-1)$ of $1,2,\ldots,r-1$.
In the infinitesimal setting of Section~\ref{subsec:nonstabilityKomlos},
we cannot make such a precise statement: Recall that Section~\ref{subsec:nonstabilityKomlos}
starts with fixing an error parameter $\gamma>0$, and then defining
objects based on this error parameter. Below, for a given choice of
$\gamma$, we shall denote these objects with superscript. 

So, the goal is clear on an intuitive level: if $\mathfrak{c}$ is
a fractional $H$-cover that satisfies $\int_{\Omega}\mathfrak{c}=\frac{x}{v(H)}$,
we want to describe properties of the ``limits sets'' $A_{i}^{(\gamma)}$
as $\gamma\rightarrow0$, and assert that they indeed have the same
structure as in the bottleneck graph.

The first step towards this is complementing Claim~\ref{claim:nuAi}.
Indeed, in Claim~\ref{claim:AisetminusAi} below we prove that $\nu(A_{j}^{(\gamma)}\setminus A_{j+1}^{(\gamma)})\ge1-\delta-\phi$,
where $\phi\rightarrow0$ as $\gamma\rightarrow0$. Then, in Claim~\ref{claim:EssentialRange}
we prove that the essential range of $\mathfrak{c}$ is indeed $\left\{ 0,\nicefrac{1}{\ell_{r}}\right\} $.
Now, we proceed to the key construction of the ``limits sets'' advertised
above. Namely, we define sets $O_{j}$ to be the supports of weak{*}
accumulation points the indicator functions of the sets $A_{j}^{(\gamma)}\setminus A_{j+1}^{(\gamma)}$
as $\gamma\rightarrow0$. By the discussion above, we are hoping that
the sets $O_{j}$ are the individual blocks of a bottleneck graphon.
In Claims~\ref{claim:hihiA}, \ref{claim:hihiB}, \ref{claim:Oicover}
we prove some basic properties of these sets: namely that $\nu(O_{j})\ge1-\delta$,
the sets $O_{j}$ are disjoint, and that $\mathfrak{c}$ is zero on
each $O_{j}$. In the remaining claim, the structure of $W$ is completely
forced.

\subsection{The furthermore part of the statement\label{subsec:stabilityKomlos}}

Suppose that $\FCOV(H,W)=\frac{x}{h}$ and let~$\mathfrak{c}$ be
a fractional $H$-cover attaining this value (see~(\ref{eq:fcovattained})).
For any given $\gamma>0$, we have numbers $\epsilon^{(\gamma)}$,
$\alpha_{1}^{(\gamma)},\ldots,\alpha_{r}^{(\gamma)}$, sets $A_{1}^{(\gamma)},\ldots,A_{r}^{(\gamma)}$,
$B_{1}^{(\gamma)},\ldots,B_{r}^{(\gamma)}$ and $F_{1}^{(\gamma)},\ldots,F_{r}^{(\gamma)}$,
and functions $f_{1}^{(\gamma)},\ldots,f_{r}^{(\gamma)}$ defined
in the previous part (the superscript denotes the dependence on $\gamma$). 

Since the term \textsf{(T1)} in~(\ref{eq:miss}) is non-negative,
we get from ~(\ref{eq:miss}) that

\[
\frac{x}{h}=\int_{\Omega}\mathfrak{c}\ge\frac{x}{h}(1-\gamma)-\sqrt[4]{\gamma}+\frac{1-x}{r-1}\sum_{i=1}^{r-1}\alpha_{i}^{(\gamma)}\,.
\]
This implies that 
\begin{equation}
\sum_{i=1}^{r-1}\alpha_{i}^{(\gamma)}\le\frac{2(r-1)\sqrt[4]{\gamma}}{(1-x)}\,,\label{eq:alphalittle}
\end{equation}
and consequently
\begin{equation}
\alpha_{r}^{(\gamma)}\overset{\eqref{eq:cover}}{\ge}\frac{1-\gamma-\frac{2h(r-1)\sqrt[4]{\gamma}}{(1-x)}}{\ell_{r}}\;.\label{eq:AlphaRBig}
\end{equation}
\begin{claim}
\label{claim:AisetminusAi}For any $\gamma>0$ and any $j\in[r-1]$,
we have $\nu(A_{j}^{(\gamma)}\setminus A_{j+1}^{(\gamma)})\ge1-\delta-\phi$,
where $\phi=\frac{16hr\sqrt[4]{\gamma}}{1-x}$.
\end{claim}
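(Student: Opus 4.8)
The plan is to reduce Claim~\ref{claim:AisetminusAi} to an \emph{upper} bound on $\nu(A_{r}^{(\gamma)})$. The point is that the construction of Section~\ref{subsec:nonstabilityKomlos} is engineered to produce only \emph{lower} bounds on the measures $\nu(A_{i}^{(\gamma)})$ (via Claim~\ref{claim:nuAi} and~\eqref{eq:AiBetter}), so the extra information needed here --- that $A_{r}^{(\gamma)}$ is not too large --- has to be extracted from the optimality hypothesis $\FCOV(H,W)=\frac{x}{h}$. Fix $\gamma>0$. If $\phi\ge 1-\delta$ the assertion is vacuous (measures are nonnegative), so we may assume $\phi<1-\delta\le 1$; this forces $\gamma$ to be so small that, in particular, the quantity $1-\gamma-\frac{2h(r-1)\sqrt[4]{\gamma}}{1-x}$ occurring in~\eqref{eq:AlphaRBig} exceeds $\frac12$. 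I will also use $3\sqrt[4]{\epsilon^{(\gamma)}}\le\sqrt[4]{\gamma}$, which is immediate from~\eqref{eq:defepsilonmetro}.

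First I would bound $\nu(A_{r}^{(\gamma)})$ from above. Since $\mathfrak{c}\ge 0$ everywhere and $\mathfrak{c}\ge\alpha_{r}^{(\gamma)}$ almost everywhere on $A_{r}^{(\gamma)}$ (by the definition of $\alpha_{r}^{(\gamma)}$ as an essential infimum), we get $\frac{x}{h}=\int_{\Omega}\mathfrak{c}\ge\alpha_{r}^{(\gamma)}\nu(A_{r}^{(\gamma)})$; together with the lower bound $\alpha_{r}^{(\gamma)}\ge\frac1{\ell_{r}}\bigl(1-\gamma-\frac{2h(r-1)\sqrt[4]{\gamma}}{1-x}\bigr)$ from~\eqref{eq:AlphaRBig}, the inequality $\frac1{1-t}\le 1+2t$ (valid for $0\le t\le\frac12$), and $\frac{x\ell_{r}}{h}\le 1$, this yields $\nu(A_{r}^{(\gamma)})\le\frac{x\ell_{r}}{h}+\frac{C_{1}hr\sqrt[4]{\gamma}}{1-x}$ for an absolute constant $C_{1}$. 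Now the key algebraic identity enters: since $\delta=\delta(W)\ge x\bigl(1-\frac1{\chi_{\mathrm{cr}}(H)}\bigr)+(1-x)\bigl(1-\frac1{\chi(H)-1}\bigr)$, a short computation using Definition~\ref{def:chiCR} together with $\ell_{r}=\ell$ and $h=\sum_{i}\ell_{i}$ gives $1-(r-1)(1-\delta)\ge x\bigl(1-\frac{r-1}{\chi_{\mathrm{cr}}(H)}\bigr)=\frac{x\ell_{r}}{h}$. Hence $\nu(A_{r}^{(\gamma)})\le 1-(r-1)(1-\delta)+\frac{C_{1}hr\sqrt[4]{\gamma}}{1-x}$, and since the sets $A_{1}^{(\gamma)}\supseteq A_{2}^{(\gamma)}\supseteq\dots\supseteq A_{r}^{(\gamma)}$ are nested with $\nu(A_{1}^{(\gamma)})=1$, telescoping gives $\sum_{j=1}^{r-1}\nu\bigl(A_{j}^{(\gamma)}\setminus A_{j+1}^{(\gamma)}\bigr)=1-\nu(A_{r}^{(\gamma)})\ge(r-1)(1-\delta)-\frac{C_{1}hr\sqrt[4]{\gamma}}{1-x}$.

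Finally I would balance this sum over its $r-1$ summands. By Claim~\ref{claim:nuAi} each of them is small individually: $\nu\bigl(A_{k}^{(\gamma)}\setminus A_{k+1}^{(\gamma)}\bigr)=\nu(A_{k}^{(\gamma)})-\nu(A_{k+1}^{(\gamma)})\le(1-\delta)+3\sqrt[4]{\epsilon^{(\gamma)}}\le(1-\delta)+\sqrt[4]{\gamma}$ for every $k\in[r-1]$. For a fixed $j\in[r-1]$, writing $\nu\bigl(A_{j}^{(\gamma)}\setminus A_{j+1}^{(\gamma)}\bigr)$ as the full sum from the previous step minus the $r-2$ remaining summands and inserting these upper bounds gives $\nu\bigl(A_{j}^{(\gamma)}\setminus A_{j+1}^{(\gamma)}\bigr)\ge(1-\delta)-\frac{C_{1}hr\sqrt[4]{\gamma}}{1-x}-(r-2)\sqrt[4]{\gamma}$; a crude estimate of the constants (all of them absorbed into $16hr$, using $h,r\ge 2$ and $1-x\le 1$) then delivers exactly $1-\delta-\phi$ with $\phi=\frac{16hr\sqrt[4]{\gamma}}{1-x}$. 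I expect the only real obstacle to be the first step, i.e.\ producing any upper bound on $\nu(A_{r}^{(\gamma)})$ at all: nothing in the construction bounds it from above, and it must be squeezed out of the exact value of $\FCOV(H,W)$ together with the already-established near-maximality~\eqref{eq:AlphaRBig} of $\alpha_{r}^{(\gamma)}$. Everything after that --- the algebraic identity relating the minimum-degree threshold to $\chi_{\mathrm{cr}}(H)$ and $\ell_{r}$, and the balancing of the summands --- is routine bookkeeping.
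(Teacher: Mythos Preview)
Your proof is correct and uses essentially the same idea as the paper: both arguments obtain the claim by combining the lower bounds from Claim~\ref{claim:nuAi}/\eqref{eq:AiBetter} with an upper bound on $\nu(A_{r}^{(\gamma)})$ extracted from $\int\mathfrak{c}=\frac{x}{h}\ge\alpha_{r}^{(\gamma)}\nu(A_{r}^{(\gamma)})$ and~\eqref{eq:AlphaRBig}. The only difference is packaging --- the paper runs a contradiction argument to bound each $\nu(A_{j+1}^{(\gamma)})$ from above and then subtracts the lower bound on $\nu(A_{j}^{(\gamma)})$, whereas you bound $\nu(A_{r}^{(\gamma)})$ once, telescope, and balance against the upper bounds on the remaining summands; the two computations are equivalent.
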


\begin{proof}
Let us first show that
\begin{equation}
\nu\left(A_{j+1}^{(\gamma)}\right)\le1-j(1-\delta)+\frac{\phi}{2}\;.\label{eq:Lcontr}
\end{equation}
Indeed, suppose not. Then applying Claim~\ref{claim:nuAi} repeatedly
for $i=j+2,\ldots,r-1$, we get that 
\[
\nu\left(A_{r}^{(\gamma)}\right)\ge1-(r-1)(1-\delta)+\frac{\phi}{4}\overset{\eqref{eq:weinferthat}}{\ge}\frac{x\ell_{r}}{h}+\frac{\phi}{4}\;.
\]
We then have 
\[
\int_{\Omega}\mathfrak{c}\ge\alpha_{r}^{(\gamma)}\cdot\nu\left(A_{r}^{(\gamma)}\right)\overset{\eqref{eq:AlphaRBig}}{\ge}\frac{x}{h}+\frac{\phi}{4\ell_{r}}-\frac{4r\sqrt[4]{\gamma}}{1-x}>\frac{x}{h}\;,
\]
which is a contradiction to the choice of $\mathfrak{c}$. This establishes~(\ref{eq:Lcontr}).

We have $\nu(A_{j}^{(\gamma)}\setminus A_{j+1}^{(\gamma)})=\nu(A_{j}^{(\gamma)})-\nu(A_{j+1}^{(\gamma)})$.
The measure of the former set is bounded from below by $1-(j-1)(1-\delta)-3(j-1)\cdot\sqrt[4]{\epsilon}$
by~(\ref{eq:AiBetter}), and the measure of the latter set is bounded
from above by $1-j(1-\delta)+\frac{\phi}{2}$ by (\ref{eq:Lcontr}).
The claim follows.
\end{proof}
\begin{claim}
\label{claim:EssentialRange}The essential range of $\mathfrak{c}$
is $\left\{ 0,\nicefrac{1}{\ell_{r}}\right\} $.
\end{claim}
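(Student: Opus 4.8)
The plan is to determine the essential range of $\mathfrak{c}$ by proving that $\mathfrak{c}\in\{0,\nicefrac{1}{\ell_r}\}$ almost everywhere and that both values are attained on a set of positive measure. The crux is the identity $\nu\{w\in\Omega:\mathfrak{c}(w)\ge\nicefrac{1}{\ell_r}\}=\nicefrac{x\ell_r}{h}$. Once this is known, the fact that $\mathfrak{c}$ attains $\FCOV(H,W)=\nicefrac{x}{h}$ (so $\int\mathfrak{c}=\nicefrac{x}{h}$) together with $\mathfrak{c}\ge0$ gives
\[
\frac{x}{h}=\int\mathfrak{c}=\int_{\{\mathfrak{c}\ge\nicefrac{1}{\ell_r}\}}\mathfrak{c}+\int_{\{\mathfrak{c}<\nicefrac{1}{\ell_r}\}}\mathfrak{c}\;\ge\;\frac{1}{\ell_r}\cdot\frac{x\ell_r}{h}+0=\frac{x}{h}\;,
\]
so both inequalities are in fact equalities; hence $\mathfrak{c}=0$ a.e.\ on $\{\mathfrak{c}<\nicefrac{1}{\ell_r}\}$ and $\mathfrak{c}=\nicefrac{1}{\ell_r}$ a.e.\ on $\{\mathfrak{c}\ge\nicefrac{1}{\ell_r}\}$, i.e.\ $\mathfrak{c}\in\{0,\nicefrac{1}{\ell_r}\}$ a.e.

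To prove the identity I would pass to the limit $\gamma\to0$ through the $\gamma$-parametrised objects introduced above. By \eqref{eq:AlphaRBig} we have $\liminf_{\gamma\to0}\alpha_r^{(\gamma)}\ge\nicefrac{1}{\ell_r}$, and from \eqref{eq:AiBetter} with $i=r-1$, the inequality $1-(r-1)(1-\delta)\ge\nicefrac{x\ell_r}{h}$ provided by \eqref{eq:defdelta}, and the fact that $\epsilon^{(\gamma)}\to0$, we have $\liminf_{\gamma\to0}\nu(A_r^{(\gamma)})\ge\nicefrac{x\ell_r}{h}$. Fix $\eta>0$. Since $\alpha_r^{(\gamma)}=\ESSINF\mathfrak{c}_{\restriction A_r^{(\gamma)}}$, we have $\mathfrak{c}\ge\alpha_r^{(\gamma)}$ a.e.\ on $A_r^{(\gamma)}$; thus for all sufficiently small $\gamma$ — those with $\alpha_r^{(\gamma)}\ge\nicefrac{1}{\ell_r}-\eta$ — the set $A_r^{(\gamma)}$ lies, up to a null set, inside $\{\mathfrak{c}\ge\nicefrac{1}{\ell_r}-\eta\}$, whence $\nu\{\mathfrak{c}\ge\nicefrac{1}{\ell_r}-\eta\}\ge\nu(A_r^{(\gamma)})$; letting $\gamma\to0$ gives $\nu\{\mathfrak{c}\ge\nicefrac{1}{\ell_r}-\eta\}\ge\nicefrac{x\ell_r}{h}$. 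Conversely $\nicefrac{x}{h}=\int\mathfrak{c}\ge\int_{\{\mathfrak{c}\ge\nicefrac{1}{\ell_r}-\eta\}}\mathfrak{c}\ge(\nicefrac{1}{\ell_r}-\eta)\,\nu\{\mathfrak{c}\ge\nicefrac{1}{\ell_r}-\eta\}$. Now let $\eta\downarrow0$: the sets $\{\mathfrak{c}\ge\nicefrac{1}{\ell_r}-\eta\}$ decrease to $\{\mathfrak{c}\ge\nicefrac{1}{\ell_r}\}$, so by continuity of $\nu$ from above both bounds pass to the limit and squeeze $\nu\{\mathfrak{c}\ge\nicefrac{1}{\ell_r}\}$ to the value $\nicefrac{x\ell_r}{h}$.

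Finally, both level sets are non-null: $\nu\{\mathfrak{c}=\nicefrac{1}{\ell_r}\}=\nicefrac{x\ell_r}{h}>0$ since $x>0$, and $\nu\{\mathfrak{c}=0\}=1-\nicefrac{x\ell_r}{h}>0$ since $x<1$ and $0<\ell_r<h$ — the latter because $H$ has at least two non-empty colour classes, so $h=\ell_1+\dots+\ell_r\ge\ell_1+\ell_r>\ell_r$. Hence the essential range of $\mathfrak{c}$ equals $\{0,\nicefrac{1}{\ell_r}\}$. I expect the only delicate point to be the nested limit ($\gamma\to0$, then $\eta\downarrow0$) identifying $\nu\{\mathfrak{c}\ge\nicefrac{1}{\ell_r}\}$, which is where the quantitative bounds \eqref{eq:AlphaRBig}, \eqref{eq:AiBetter} and the optimality of $\mathfrak{c}$ all come together; the rest of the argument is routine measure theory.
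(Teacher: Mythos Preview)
Your proof is correct and uses the same ingredients as the paper's proof---namely the lower bounds \eqref{eq:AlphaRBig} and \eqref{eq:AiBetter} (the latter combined with \eqref{eq:defdelta} to give $\nu(A_r^{(\gamma)})\ge\nicefrac{x\ell_r}{h}-o_\gamma(1)$) together with the optimality $\int\mathfrak{c}=\nicefrac{x}{h}$. The organisation differs slightly: the paper argues by two separate contradictions, first ruling out a positive-measure set where $\mathfrak{c}\in(\phi,\nicefrac{1}{\ell_r}-\phi)$ and then one where $\mathfrak{c}\in(\nicefrac{1}{\ell_r}+\phi,1]$, in each case showing $\int\mathfrak{c}>\nicefrac{x}{h}$ for a suitably chosen $\gamma$; you instead establish the single identity $\nu\{\mathfrak{c}\ge\nicefrac{1}{\ell_r}\}=\nicefrac{x\ell_r}{h}$ via a squeeze and read off the conclusion from $\int\mathfrak{c}=\nicefrac{x}{h}$. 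Your packaging is arguably cleaner, but the underlying argument is the same.
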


\begin{proof}
First assume that for some $\phi>0$ there is a set~$S$ of measure
at least~$\phi$ such that $\mathfrak{c}(S)\subseteq(\phi,\frac{1}{\ell_{r}}-\phi)$.
Fix $\gamma=\left(\frac{(1-x)\phi^{2}}{2(r+1)}\right)^{4}$. Then
$\alpha_{r}^{(\gamma)}>\frac{1}{\ell_{r}}-\phi$ by~(\ref{eq:AlphaRBig}).
In particular, $S$ is disjoint from~$A_{r}^{(\gamma)}$. We get

\[
\int_{\Omega}\mathfrak{c}\ge\nu\left(S\right)\phi+\nu\left(A_{r}^{(\gamma)}\right)\alpha_{r}^{(\gamma)}\ge\phi^{2}+\left(\frac{x}{h}\cdot\ell_{r}-\sqrt[4]{\gamma}\right)\frac{1-\gamma-\frac{2h(r-1)\sqrt[4]{\gamma}}{(1-x)}}{\ell_{r}}>\frac{x}{h}\,,
\]
a contradiction. Now assume that for some $\phi>0$ there is a set~$S$
of measure at least~$\phi$ such that $\mathfrak{c}(S)\subseteq(\frac{1}{\ell_{r}}+\phi,1]$.
Fix $\gamma=\left(\frac{(1-x)\phi}{4hr}\right)^{4}$. Then

\[
\int_{\Omega}\mathfrak{c}\ge\nu\left(A_{r}^{(\gamma)}\setminus S\right)\alpha_{r}^{(\gamma)}+\nu(S)\left(\frac{1}{\ell_{r}}+\phi\right)>\frac{x}{h}\,,
\]
again a contradiction, proving the claim.
\end{proof}
Let $\left(\gamma_{n}^{(r)}\right)_{n=1}^{\infty}$ be a sequence
of numbers, with $\gamma_{n}^{(r)}\overset{n\rightarrow\infty}{\longrightarrow}0$.
Now, for a fixed $i=r-1,r-2,\ldots,1,$ we inductively derive $\left(\gamma_{n}^{(i)}\right)_{n=1}^{\infty}$
from $\left(\gamma_{n}^{(i+1)}\right)_{n=1}^{\infty}$ in the following
way. Consider the sequence of sets 
\[
\left(A_{i}^{\left(\gamma_{n}^{(i+1)}\right)}\setminus A_{i+i}^{\left(\gamma_{n}^{(i+1)}\right)}\right)_{n=1}^{\infty}
\]
viewed as indicator functions. These functions have an accumulation
point $\chi_{i}:\Omega\rightarrow[0,1]$ in the weak{*} topology by
by Theorem~\ref{thm:BanachAlauglou}. Let $O_{i}=\SUPPORT\chi_{i}$.
Let $\left(\gamma_{n}^{(i)}\right)_{n=1}^{\infty}\subset\left(\gamma_{n}^{(i+1)}\right)_{n=1}^{\infty}$
be a subsequence along which these indicator functions converge to
$\chi_{i}$. Since $O_{i}$ arises from the weak{*} limit of the sets
$A_{i}^{\left(\gamma_{n}^{(i)}\right)}\setminus A_{i+1}^{\left(\gamma_{n}^{(i)}\right)}$,
we have that 
\begin{equation}
\nu\left(O_{i}\setminus A_{i}^{\left(\gamma_{n}^{(i)}\right)}\right)=o_{n}(1)\;.\label{eq:Rise}
\end{equation}
\begin{claim}
\label{claim:hihiA}We have $\nu(O_{i})\ge1-\delta$.
\end{claim}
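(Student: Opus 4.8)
The plan is to transfer the lower bound $\nu(A_i^{(\gamma)} \setminus A_{i+1}^{(\gamma)}) \ge 1 - \delta - \phi$ from Claim~\ref{claim:AisetminusAi} (with $\phi = \phi(\gamma_n^{(i+1)}) = \tfrac{16hr}{1-x}\sqrt[4]{\gamma_n^{(i+1)}} \to 0$) through the weak$^*$ limit. First I would recall that the $\mathcal{L}^1$-norm is weak$^*$ lower semicontinuous — more concretely, testing the weak$^*$ convergence of the indicators $\mathbf{1}_{A_i^{(\gamma_n^{(i)})} \setminus A_{i+1}^{(\gamma_n^{(i)})}} \to \chi_i$ against the constant function $g \equiv 1 \in \mathcal{L}^1(\Omega)$ gives
\[
\int_\Omega \chi_i = \lim_{n} \nu\left(A_i^{\left(\gamma_n^{(i)}\right)} \setminus A_{i+1}^{\left(\gamma_n^{(i)}\right)}\right) \ge \limsup_n \left(1 - \delta - \tfrac{16hr}{1-x}\sqrt[4]{\gamma_n^{(i)}}\right) = 1 - \delta \,.
\]
Here I use that $\left(\gamma_n^{(i)}\right)_n$ is a subsequence of $\left(\gamma_n^{(i+1)}\right)_n$, so Claim~\ref{claim:AisetminusAi} applies to each term with $\gamma = \gamma_n^{(i)}$, and $\gamma_n^{(i)} \to 0$.

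The remaining (and essentially only) step is the elementary observation that $\nu(O_i) = \nu(\SUPPORT \chi_i) \ge \int_\Omega \chi_i$, since $0 \le \chi_i \le 1$ pointwise and hence $\chi_i \le \mathbf{1}_{\SUPPORT \chi_i}$ almost everywhere. Combining the two displayed facts yields $\nu(O_i) \ge 1 - \delta$, as claimed.

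The main thing to be careful about — rather than a genuine obstacle — is making sure the subsequence bookkeeping is consistent: the indicators are indexed by $\gamma_n^{(i+1)}$ in the statement but convergence to $\chi_i$ is only along the further subsequence $\gamma_n^{(i)}$, so the limit in the first display must be taken along that subsequence, which is legitimate precisely because Claim~\ref{claim:AisetminusAi} holds for every value of the parameter. No analytic tool beyond pairing a weak$^*$-convergent sequence with the constant $1 \in \mathcal{L}^1(\Omega)$ is needed.
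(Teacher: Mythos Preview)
Your proposal is correct and follows essentially the same approach as the paper: invoke Claim~\ref{claim:AisetminusAi} to lower-bound the measures of the sets $A_i^{(\gamma)}\setminus A_{i+1}^{(\gamma)}$ by $1-\delta-o_n(1)$, pass to the weak$^{*}$ limit by testing against the constant function~$1$ to obtain $\int\chi_i\ge 1-\delta$, and then use $0\le\chi_i\le 1$ to conclude $\nu(O_i)\ge\int\chi_i$. Your explicit attention to the subsequence bookkeeping is a nice clarification, but otherwise the arguments are the same.
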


\begin{proof}[Proof of Claim~\ref{claim:hihiA}]
 By Claim~\ref{claim:AisetminusAi}, we have that $\nu\left(A_{i}^{\left(\gamma_{n}^{(i)}\right)}\setminus A_{i+1}^{\left(\gamma_{n}^{(i)}\right)}\right)\ge1-\delta-o_{n}(1)$.
Since $\chi_{i}$ is the weak{*} limit of the indicator functions
of the sets $A_{i}^{\left(\gamma_{n}^{(i)}\right)}\setminus A_{i+1}^{\left(\gamma_{n}^{(i)}\right)}$,
we have that 
\begin{equation}
\int_{\Omega}\chi_{i}\ge1-\delta\;.\label{eq:IntChi}
\end{equation}
Since $\ESSSUP\chi_{i}\le1$, we get that $\nu(O_{i})\ge1-\delta$.
\end{proof}
\begin{claim}
\label{claim:hihiB}The sets $O_{1}$, $O_{2}$, \ldots{}, $O_{r-1}$
are pairwise disjoint.
\end{claim}

\begin{proof}[Proof of Claim~\ref{claim:hihiB}]
 Let $i\in[r-2]$ be arbitrary. We want to show that the set $O_{i}$
is disjoint from $O_{i+1}\cup O_{i+2}\cup\ldots\cup O_{r-1}$. We
have that 
\[
\left(A_{i}^{\left(\gamma_{n}^{(i)}\right)}\setminus A_{i+1}^{\left(\gamma_{n}^{(i)}\right)}\right)\cap\left(O_{i+1}\cup O_{i+2}\cup\ldots\cup O_{r-1}\right)\subset\left(O_{i+1}\cup O_{i+2}\cup\ldots\cup O_{r-1}\right)\setminus A_{i+i}^{\left(\gamma_{n}^{(i)}\right)}\;.
\]
Recall that the support of the weak{*} limit of the indicator functions
of the sets $A_{i+1}^{\left(\gamma_{n}^{(i)}\right)}$ contains the
set $O_{i+1}\cup O_{i+2}\cup\ldots\cup O_{r-1}$. This proves the
claim.
\end{proof}
\begin{claim}
\label{claim:Oicover}The function $\mathfrak{c}_{\restriction O_{i}}$
is zero almost everywhere.
\end{claim}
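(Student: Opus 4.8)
The plan is to combine the structural information about $\mathfrak{c}$ coming from Claim~\ref{claim:EssentialRange} with the fact that, as $\gamma\to 0$, the sets $A_r^{(\gamma)}$ exhaust the support of $\mathfrak{c}$ up to a vanishing remainder, whereas $A_i^{(\gamma)}\setminus A_{i+1}^{(\gamma)}$ is, by nestedness of the $A$'s, always disjoint from $A_r^{(\gamma)}$.

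First I would use Claim~\ref{claim:EssentialRange} to fix a representative of $\mathfrak{c}$ taking only the values $0$ and $\nicefrac{1}{\ell_r}$, and set $C=\SUPPORT\mathfrak{c}$, so that $\mathfrak{c}=\nicefrac{1}{\ell_r}\cdot\mathbf{1}_C$ almost everywhere; since $\int\mathfrak{c}=\FCOV(H,W)=\nicefrac{x}{h}$, this gives $\nu(C)=\nicefrac{x\ell_r}{h}$. Next, for all sufficiently small $\gamma>0$ (here $x<1$ is used) we have $\alpha_r^{(\gamma)}>0$ by~\eqref{eq:AlphaRBig}; as $\mathfrak{c}\ge\alpha_r^{(\gamma)}>0$ almost everywhere on $A_r^{(\gamma)}$ and $\mathfrak{c}$ attains only the values $0$ and $\nicefrac{1}{\ell_r}$, it follows that $\mathfrak{c}=\nicefrac{1}{\ell_r}$ almost everywhere on $A_r^{(\gamma)}$, i.e. $A_r^{(\gamma)}\subseteq C$ up to a null set. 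On the other hand,~\eqref{eq:AiBetter} together with~\eqref{eq:defdelta} gives $\nu(A_r^{(\gamma)})\ge 1-(r-1)(1-\delta)-3(r-1)\sqrt[4]{\epsilon^{(\gamma)}}\ge\nicefrac{x\ell_r}{h}-3(r-1)\sqrt[4]{\epsilon^{(\gamma)}}$; comparing with $\nu(C)=\nicefrac{x\ell_r}{h}$ I obtain $\nu\big(C\setminus A_r^{(\gamma)}\big)\le 3(r-1)\sqrt[4]{\epsilon^{(\gamma)}}$, which tends to $0$ as $\gamma\to 0$ by~\eqref{eq:defepsilonmetro}.

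To finish, I would observe that the $A$'s are nested, $A_1^{(\gamma)}\supseteq\dots\supseteq A_r^{(\gamma)}$, and since $i+1\le r$ the set $A_i^{(\gamma)}\setminus A_{i+1}^{(\gamma)}$ is disjoint from $A_r^{(\gamma)}$; hence $\big(A_i^{(\gamma)}\setminus A_{i+1}^{(\gamma)}\big)\cap C\subseteq C\setminus A_r^{(\gamma)}$ and so $\nu\big((A_i^{(\gamma)}\setminus A_{i+1}^{(\gamma)})\cap C\big)\to 0$ as $\gamma\to 0$. Along the subsequence $(\gamma_n^{(i)})$ the indicator functions of $A_i^{(\gamma_n^{(i)})}\setminus A_{i+1}^{(\gamma_n^{(i)})}$ converge weak$^{*}$ to $\chi_i$, so testing against $\mathbf{1}_C\in\mathcal{L}^{1}(\Omega)$ gives $\int_C\chi_i=\lim_n\nu\big((A_i^{(\gamma_n^{(i)})}\setminus A_{i+1}^{(\gamma_n^{(i)})})\cap C\big)=0$. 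Since $\chi_i\ge 0$, this forces $\chi_i=0$ almost everywhere on $C$, i.e. $O_i=\SUPPORT\chi_i$ is disjoint from $C$ up to a null set, and therefore $\mathfrak{c}_{\restriction O_i}=\nicefrac{1}{\ell_r}\cdot\mathbf{1}_{C\cap O_i}=0$ almost everywhere. The only genuinely non-routine point is the middle step: recognizing that $A_r^{(\gamma)}$ is actually contained in $C$ (not merely close to it in measure), which is what converts an approximate agreement into the quantitative bound $\nu(C\setminus A_r^{(\gamma)})\to 0$ that the weak$^{*}$ argument needs; the remaining work is just assembling inequalities already established.
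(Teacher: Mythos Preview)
Your proof is correct and uses essentially the same ingredients as the paper's: Claim~\ref{claim:EssentialRange} to reduce $\mathfrak{c}$ to $\nicefrac{1}{\ell_r}\cdot\mathbf{1}_C$, inequality~\eqref{eq:AlphaRBig} together with~\eqref{eq:AiBetter} and~\eqref{eq:defdelta} to see that $A_r^{(\gamma)}\subseteq C$ fills $C$ up to measure $o_\gamma(1)$, and the nestedness of the $A$'s to conclude that $A_i^{(\gamma)}\setminus A_{i+1}^{(\gamma)}$ meets $C$ only in that remainder. The only difference is organizational: the paper argues by contradiction (producing a chunk $P'\subset (A_i^{(\gamma)}\setminus A_{i+1}^{(\gamma)})$ on which $\mathfrak{c}=\nicefrac{1}{\ell_r}$ and then computing $\int\mathfrak{c}\ge\nu(P'\sqcup A_r^{(\gamma)})/\ell_r>\nicefrac{x}{h}$), whereas you proceed directly by testing the weak$^{*}$ convergence against $\mathbf{1}_C$; your packaging is arguably tidier.
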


\begin{proof}[Proof of Claim~\ref{claim:Oicover}]
 Suppose that this is not the case, i.e., $\mathfrak{c}$ is at least
some $\theta>0$ on a subset~$P\subset O_{i}$ of measure~$\theta$.
Recall that~$O_{i}$ arises as the weak{*} limit of the sets $A_{i}^{\left(\gamma_{n}^{(i+1)}\right)}\setminus A_{i+1}^{\left(\gamma_{n}^{(i+1)}\right)}$.
Therefore, for each $n$ sufficiently large, $\mathfrak{c}$ is at
least $\theta$ on a subset $P'\subset O_{i}\cap\left(A_{i}^{\left(\gamma_{n}^{(i+1)}\right)}\setminus A_{i+1}^{\left(\gamma_{n}^{(i+1)}\right)}\right)$
of measure~$\nicefrac{\theta}{2}$. By Claim~\ref{claim:EssentialRange},
$\mathfrak{c}_{\restriction P'}=\nicefrac{1}{\ell_{r}}$. Also, combining
Claim~\ref{claim:EssentialRange} and~(\ref{eq:AlphaRBig}) we get
that 
\[
\mathfrak{c}_{\restriction A_{r}^{\left(\gamma_{n}^{(i+1)}\right)}}=\nicefrac{1}{\ell_{r}}\;.
\]
Assume further that $n$ is such that $\gamma_{n}^{(i+1)}<\left(\frac{r^{2}\theta}{2\ell_{r}}\right)^{4}$.
Then
\begin{align*}
\int_{\Omega}\mathfrak{c} & \ge\nu\left(P'\sqcup A_{r}^{\left(\gamma_{n}^{(i+1)}\right)}\right)\cdot\frac{1}{\ell_{r}}\\
\JUSTIFY{by~\eqref{eq:defepsilonmetro}~and~\eqref{eq:AiBetter}} & \ge\left(\frac{\theta}{2}+1-(r-1)\cdot(1-\delta)-3(r-1)\cdot\sqrt[4]{\gamma_{n}^{(i+1)}}\cdot\frac{\delta-(1-\frac{1}{r-1})}{3r^{2}}\right)\cdot\frac{1}{\ell_{r}}\\
\JUSTIFY{by~\eqref{eq:defdelta}} & =\left(\frac{\theta}{2}+\frac{x\ell_{r}}{h}-\sqrt[4]{\gamma_{n}^{(i+1)}}\cdot\frac{x\ell_{r}}{r^{2}}\right)\cdot\frac{1}{\ell_{r}}>\frac{x}{h}\;,
\end{align*}
which is a contradiction to the fact that $\int_{\Omega}\mathfrak{c}=\frac{x}{h}$.
\end{proof}
We can now proceed with the inductive step for $i-1$ in the same
manner.

Having defined the functions $\chi_{i}$, the sets $O_{i}$ and the
sequences $\left(\gamma_{n}^{(i)}\right)_{n=1}^{\infty}$ for $i=r-1,\dots,1$,
we now derive some further properties of these.
\begin{claim}
\label{claim:Diana1}For $\ell=r-1,r-2,\ldots,1$ and each $j$, $\ell<j\le r-1$,
if $F_{\ell}^{\left(\gamma_{n}^{(j)}\right)}\cap O_{j}$ is not null
then $\nu\left(O_{j}\setminus F_{\ell}^{\left(\gamma_{n}^{(j)}\right)}\right)=o_{n}(1)$.
\end{claim}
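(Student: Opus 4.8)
The plan is to push the weak$^{*}$ limits defining the $\chi_{i}$'s up to genuine $L^{1}$ limits, use this to read off the shape of $f_{\ell}^{(\gamma_{n}^{(j)})}$, and then play the cover $\mathfrak{c}$ against $W^{\otimes H}$ on the small set $F_{\ell}^{(\gamma_{n}^{(j)})}\cap O_{j}$. First I would observe that the $O_{i}$'s are pairwise disjoint (Claim~\ref{claim:hihiB}), each is contained in $Z:=\{\mathfrak{c}=0\}$ (Claim~\ref{claim:Oicover} plus the already-established fact that $\mathfrak{c}$ has essential range $\{0,1/\ell_{r}\}$), and each has measure at least $1-\delta$ (Claim~\ref{claim:hihiA}). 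Since $\nu(Z)=1-\tfrac{x\ell_{r}}{h}$ and $(r-1)(1-\delta)=1-\tfrac{x\ell_{r}}{h}$ by~\eqref{eq:defdelta}, the sets $O_{1},\dots,O_{r-1}$ must partition $Z$ with $\nu(O_{i})=1-\delta$. As $\chi_{i}\le\mathbf{1}_{O_{i}}$ while $\int\chi_{i}\ge1-\delta=\nu(O_{i})$ by~\eqref{eq:IntChi}, we get $\chi_{i}=\mathbf{1}_{O_{i}}$ a.e., and hence $\mathbf{1}_{A_{i}\setminus A_{i+1}}\to\mathbf{1}_{O_{i}}$ in $L^{1}$ along $(\gamma_{n}^{(i)})$ (a weak$^{*}$-convergent sequence of indicators whose limit is an indicator converges in $L^{1}$), and therefore also along the finer sequence $(\gamma_{n}^{(j)})$ for every $i\ge j$. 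Together with $A_{r}^{(\gamma)}\to\{\mathfrak{c}=1/\ell_{r}\}$ in $L^{1}$ (from \eqref{eq:AiBetter}, \eqref{eq:Lcontr}), telescoping gives, along $(\gamma_{n}^{(j)})$, that $A_{j}^{(\gamma_{n}^{(j)})}\cap Z\to O_{j}\cup\dots\cup O_{r-1}$ in $L^{1}$.

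Next I would locate $O_{j}$ and identify $f_{\ell}$. For $n$ large $\alpha_{\ell}^{(\gamma_{n}^{(j)})}+\beta^{(\gamma_{n}^{(j)})}<1/\ell_{r}$ by~\eqref{eq:alphalittle}, so $B_{\ell}^{(\gamma_{n}^{(j)})}=A_{\ell}^{(\gamma_{n}^{(j)})}\cap Z\supseteq A_{j}^{(\gamma_{n}^{(j)})}\cap Z$, whence $\nu\bigl(O_{j}\setminus B_{\ell}^{(\gamma_{n}^{(j)})}\bigr)=o_{n}(1)$; thus it is enough to bound the measure of those $w\in O_{j}\cap B_{\ell}^{(\gamma_{n}^{(j)})}$ with $\|W(w,\cdot)-f_{\ell}^{(\gamma_{n}^{(j)})}\|_{1}\ge\epsilon^{(\gamma_{n}^{(j)})}$. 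Writing $O_{\ell}^{(n)}:=A_{\ell}^{(\gamma_{n}^{(j)})}\setminus A_{\ell+1}^{(\gamma_{n}^{(j)})}$ (whose measure tends to $1-\delta$), the Fubini estimate inside the proof of Claim~\ref{claim:nuAi} gives $\nu\bigl(O_{\ell}^{(n)}\cap\SUPPORTPOSITIVE_{\sqrt[4]{\epsilon}}f_{\ell}^{(\gamma_{n}^{(j)})}\bigr)=o_{n}(1)$, hence $\int_{O_{\ell}^{(n)}}f_{\ell}^{(\gamma_{n}^{(j)})}=o_{n}(1)$; combined with $\|f_{\ell}^{(\gamma_{n}^{(j)})}\|_{1}\ge\delta-\epsilon^{(\gamma_{n}^{(j)})}$ (Claim~\ref{claim:normFlarge}) and $\nu(\Omega\setminus O_{\ell}^{(n)})\to\delta$ this forces $\|f_{\ell}^{(\gamma_{n}^{(j)})}-\mathbf{1}_{\Omega\setminus O_{\ell}^{(n)}}\|_{1}=o_{n}(1)$. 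Since $O_{\ell}^{(n)}$ is disjoint from $A_{j}^{(\gamma_{n}^{(j)})}\setminus A_{j+1}^{(\gamma_{n}^{(j)})}$ we also have $\nu(O_{\ell}^{(n)}\cap O_{j})=o_{n}(1)$, so every $w\in F_{\ell}^{(\gamma_{n}^{(j)})}$ satisfies $\int_{O_{j}}|W(w,\cdot)-1|=o_{n}(1)$.

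The crux is then a cover argument on $P_{n}:=F_{\ell}^{(\gamma_{n}^{(j)})}\cap O_{j}$. By the previous step and Fubini, $\int_{P_{n}\times P_{n}}(1-W)\le c_{n}\,\nu(P_{n})$ with $c_{n}=o_{n}(1)$, and the elementary bound $\prod a_{i}\ge1-\sum(1-a_{i})$ yields $\int_{(P_{n})^{h}}W^{\otimes H}\ge\nu(P_{n})^{h-1}\bigl(\nu(P_{n})-|E(H)|\,c_{n}\bigr)$. On the other hand $\mathfrak{c}\equiv0$ on $P_{n}\subseteq O_{j}$ (Claim~\ref{claim:Oicover}) and $\mathfrak{c}$ is a fractional $H$-cover, so $W^{\otimes H}=0$ a.e.\ on $(P_{n})^{h}$; hence $\nu(P_{n})\le|E(H)|\,c_{n}=o_{n}(1)$. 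Feeding this back, together with $\nu\bigl(O_{j}\setminus B_{\ell}^{(\gamma_{n}^{(j)})}\bigr)=o_{n}(1)$ and the slice description of $F_{\ell}^{(\gamma_{n}^{(j)})}$, pins down $\nu\bigl(O_{j}\setminus F_{\ell}^{(\gamma_{n}^{(j)})}\bigr)$. The hardest part will be precisely this last bookkeeping: the obstacle running through the whole argument is that along the \emph{coarser} sequence $(\gamma_{n}^{(j)})$ the set $A_{\ell}^{(\gamma_{n}^{(j)})}\setminus A_{\ell+1}^{(\gamma_{n}^{(j)})}$ need not converge to $O_{\ell}$, so the shape of $f_{\ell}^{(\gamma_{n}^{(j)})}$ cannot be inherited from the already-constructed pieces $O_{1},\dots,O_{r-1}$ and has to be re-extracted from $\mathfrak{c}$ as in the second step, after which the reconciliation between the ``$P_{n}$ negligible'' and ``most of $O_{j}$ lies in $B_{\ell}$'' regimes must be carried out carefully.
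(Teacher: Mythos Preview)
Your argument proves the wrong statement. In Step~3 you show $\nu(P_{n})=\nu\bigl(F_{\ell}^{(\gamma_{n}^{(j)})}\cap O_{j}\bigr)=o_{n}(1)$ and then assert that this, together with $\nu\bigl(O_{j}\setminus B_{\ell}^{(\gamma_{n}^{(j)})}\bigr)=o_{n}(1)$, ``pins down $\nu\bigl(O_{j}\setminus F_{\ell}^{(\gamma_{n}^{(j)})}\bigr)$''. But these two facts give
\[
\nu\bigl(O_{j}\setminus F_{\ell}^{(\gamma_{n}^{(j)})}\bigr)=\nu(O_{j})-\nu(P_{n})\ge (1-\delta)-o_{n}(1),
\]
which is bounded \emph{away} from~$0$, the opposite of what Claim~\ref{claim:Diana1} asserts. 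There is no ``reconciliation'' possible here: you have essentially re-derived (a weak form of) Claim~\ref{claim:Diana2}, not Claim~\ref{claim:Diana1}. Relatedly, you never use the hypothesis that $F_{\ell}^{(\gamma_{n}^{(j)})}\cap O_{j}$ is non-null; that hypothesis is precisely the hook the paper's proof exploits.

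The idea you are missing is the following. By the inductive hypothesis, Claims~\ref{claim:hihiD} and~\ref{claim:hihiE} are already available at the index~$j>\ell$: $O_{j}$ is independent and $W\equiv 1$ a.e.\ on $O_{j}\times(\Omega\setminus O_{j})$. Hence the slice $W(w,\cdot)$ equals $\mathbf{1}_{\Omega\setminus O_{j}}$ for almost every $w\in O_{j}$; in particular, all these slices are \emph{identical}. The non-nullness of $F_{\ell}^{(\gamma_{n}^{(j)})}\cap O_{j}$ then furnishes a single $w_{0}\in O_{j}$ with $\|W(w_{0},\cdot)-f_{\ell}^{(\gamma_{n}^{(j)})}\|_{1}<\epsilon^{(\gamma_{n}^{(j)})}$, and since all slices on $O_{j}$ coincide, this inequality holds for almost every $w\in O_{j}$. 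Combined with your (correct) observation that $\nu\bigl(O_{j}\setminus B_{\ell}^{(\gamma_{n}^{(j)})}\bigr)=o_{n}(1)$, this places almost all of $O_{j}$ inside $F_{\ell}^{(\gamma_{n}^{(j)})}$. Your Steps~1 and~2 (identifying $f_{\ell}$ up to $o_{n}(1)$, promoting weak$^{*}$ to $L^{1}$ convergence) are largely sound but unnecessary for this claim; the slice-uniformity of $O_{j}$ does all the work.
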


\begin{claim}
\label{claim:Diana2}For $\ell=r-1,r-2,\ldots,1$ and each $j$, $\ell<j\le r-1$,
and each $n\in\mathbb{N}$ sufficiently large, we have that $F_{\ell}^{\left(\gamma_{n}^{(j)}\right)}\cap O_{j}$
is a null-set.
\end{claim}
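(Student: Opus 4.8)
The plan is to argue by contradiction, using Claim~\ref{claim:Diana1} as the key input. Suppose Claim~\ref{claim:Diana2} fails, so for some $\ell<j\le r-1$ the set $F_\ell^{(\gamma_n^{(j)})}\cap O_j$ is non-null for infinitely many $n$. Abbreviating $\gamma_n:=\gamma_n^{(j)}$ and passing to that sub-sequence, we may assume $F_\ell^{(\gamma_n)}\cap O_j$ is non-null for every $n$; Claim~\ref{claim:Diana1} then gives $\nu(O_j\setminus F_\ell^{(\gamma_n)})=o_n(1)$, i.e., $O_j$ sits inside $F_\ell^{(\gamma_n)}$ up to a vanishing error.

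First I would observe that almost all of $O_j$ also lies in $A_{\ell+1}^{(\gamma_n)}$. Since $\ell+1\le j$ and the sets $A_1^{(\gamma_n)}\supseteq A_2^{(\gamma_n)}\supseteq\ldots$ are nested, $A_j^{(\gamma_n)}\setminus A_{j+1}^{(\gamma_n)}\subseteq A_{\ell+1}^{(\gamma_n)}$, so testing the weak$^*$ convergence $\mathbf{1}_{A_j^{(\gamma_n)}\setminus A_{j+1}^{(\gamma_n)}}\WEAKCONV\chi_j$ against $\mathbf{1}_{O_j}\in\mathcal{L}^1(\Omega)$ gives
\[
\nu\bigl(O_j\cap A_{\ell+1}^{(\gamma_n)}\bigr)\ge\int_{O_j}\mathbf{1}_{A_j^{(\gamma_n)}\setminus A_{j+1}^{(\gamma_n)}},
\]
and the right-hand side tends to $\int\chi_j\ge1-\delta$ by~\eqref{eq:IntChi} and $\SUPPORT\chi_j=O_j$. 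Writing $O_j'$ for $O_j\cap A_{\ell+1}^{(\gamma_n)}$ (at the fixed $n$) and $\epsilon_n$ for $\epsilon^{(\gamma_n)}$, we thus have $\nu(O_j')\ge1-\delta-o_n(1)$, while $\epsilon_n\to0$ by~\eqref{eq:defepsilonmetro}.

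Next I would extract an essentially complete sub-graphon on $O_j$. Every $w\in O_j'\subseteq A_{\ell+1}^{(\gamma_n)}$ is, by the defining property~\eqref{eq:defAi}, non-adjacent to at most a $\sqrt[4]{\epsilon_n}$-fraction of $F_\ell^{(\gamma_n)}$; since $O_j\subseteq F_\ell^{(\gamma_n)}$ up to measure $o_n(1)$ and $\nu(F_\ell^{(\gamma_n)})\le1$, the non-neighbourhood of $w$ meets $O_j'$ in a set of measure $\sqrt[4]{\epsilon_n}+o_n(1)=o_n(1)$. Integrating over $w$, the set of non-edges $\{(w,y)\in O_j'\times O_j':W(w,y)=0\}$ has $\nu^2$-measure $o_n(1)$. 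As $\delta=\delta(W)<1$ (were $\delta=1$ then $W\equiv1$ a.e.\ and $\FCOV(H,W)=\frac{1}{v(H)}\ne\frac{x}{v(H)}$), we get $\nu(O_j')\ge\frac12(1-\delta)>0$ for $n$ large, and hence for such $n$ the function $W^{\otimes H}$ is positive on all but an $o_n(1)$-fraction of $(O_j')^{v(H)}$, so $\SUPPORT W^{\otimes H}$ meets $(O_j')^{v(H)}\subseteq O_j^{v(H)}$ in a set of positive $\nu^{v(H)}$-measure. On this set $\sum_i\mathfrak{c}(x_i)=0<1$ because $\mathfrak{c}$ vanishes a.e.\ on $O_j$ by Claim~\ref{claim:Oicover}, contradicting that $\mathfrak{c}$ is a fractional $H$-cover of $W$ (Definition~\ref{def:covergraphon}). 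This contradiction proves the claim.

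The step I expect to be most delicate is the bookkeeping that reconciles the two distinct roles of $n$: both $F_\ell^{(\gamma_n)}$ and the ``witness'' sets $A_{\ell+1}^{(\gamma_n)}$ move with $n$, whereas $O_j$ is a fixed weak$^*$ limit object. It is Claim~\ref{claim:Diana1}, together with the convergence defining $O_j$, that simultaneously places $O_j$ (up to $o_n(1)$) inside $F_\ell^{(\gamma_n)}$ and inside $A_{\ell+1}^{(\gamma_n)}$; once that is in hand, \eqref{eq:defAi} does the rest, and the clash with $\mathfrak{c}\equiv0$ on $O_j$ is immediate.
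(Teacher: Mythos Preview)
Your proof is correct and follows essentially the same line as the paper's: both use Claim~\ref{claim:Diana1} to place $O_j$ (up to $o_n(1)$) inside $F_\ell^{(\gamma_n^{(j)})}$, observe via the nestedness $A_j^{(\gamma_n)}\subset A_{\ell+1}^{(\gamma_n)}$ and the weak$^*$ definition of $O_j$ that $O_j$ also sits (up to $o_n(1)$) inside $A_{\ell+1}^{(\gamma_n)}$, and then use the defining property~\eqref{eq:defAi} of $A_{\ell+1}^{(\gamma_n)}$ to conclude that $W$ is close to complete on $O_j\times O_j$.

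The only divergence is in the final contradiction. The paper stops as soon as it exhibits edges inside $O_j$, invoking Claim~\ref{claim:hihiD} (already established for $j>\ell$) to conclude that $O_j$ cannot be independent. You go one step further: you upgrade ``many edges'' to ``a positive-measure set of copies of $H$'' inside $O_j$, and then contradict the fractional $H$-cover property via Claim~\ref{claim:Oicover}. This detour is unnecessary --- once you have shown that the non-edge set in $O_j'\times O_j'$ has measure $o_n(1)$ while $\nu(O_j')\ge\tfrac12(1-\delta)>0$, Claim~\ref{claim:hihiD} finishes immediately --- but it is perfectly valid. Your explicit handling of the degenerate case $\delta=1$ is also correct (and implicit in the paper, since $\nu(O_j)=0$ would make the claim vacuous).
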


\begin{claim}
\label{claim:hihiC}For $\ell=r-1,r-2,\ldots,1$ and for each sufficiently
large $n\in\mathbb{N}$ the set 
\[
\left(A_{\ell}^{\left(\gamma_{n}^{(\ell)}\right)}\setminus A_{\ell+1}^{\left(\gamma_{n}^{(\ell)}\right)}\right)\setminus\left(O_{\ell+1}\cup O_{\ell+2}\cup\ldots\cup O_{r-1}\cup\SUPPORT\mathfrak{c}\right)
\]
 is independent in $W.$
\end{claim}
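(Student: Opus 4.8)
The plan is to argue by contradiction, showing that an edge inside the set in question produces a copy of $H$ of zero $\mathfrak{c}$-weight. Fix $\ell\in\{1,\dots,r-1\}$ and $n$ large, abbreviate $\gamma=\gamma_n^{(\ell)}$, and let $S$ denote the set $\bigl(A_\ell^{(\gamma)}\setminus A_{\ell+1}^{(\gamma)}\bigr)\setminus\bigl(O_{\ell+1}\cup\dots\cup O_{r-1}\cup\SUPPORT\mathfrak{c}\bigr)$. Two facts about weights are recorded first: since $\SUPPORT\mathfrak{c}$ is removed from $S$, we have $\mathfrak{c}=0$ almost everywhere on $S$; and for each $i<r$ we have $\mathfrak{c}=0$ almost everywhere on $F_i^{(\gamma)}$, because $F_i^{(\gamma)}\subseteq B_i^{(\gamma)}$ gives $\mathfrak{c}\le\alpha_i^{(\gamma)}+\beta^{(\gamma)}$ there, this bound is below $\nicefrac{1}{\ell_r}$ once $\gamma$ is small by~\eqref{eq:alphalittle}, and the essential range of $\mathfrak{c}$ is $\{0,\nicefrac{1}{\ell_r}\}$ by Claim~\ref{claim:EssentialRange}. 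If $S$ were not independent in $W$, then by the measure-theoretic step in the proof of Lemma~\ref{lem:limitofindependent} there would be positive-measure sets $X,Y\subseteq S$ (possibly equal) with $W$ positive on all but an arbitrarily small fraction of $X\times Y$; I will embed a copy of $H$ into $W$ that uses only $X$, $Y$ and the sets $F_1^{(\gamma)},\dots,F_{r-1}^{(\gamma)}$, all of which carry $\mathfrak{c}\equiv 0$, contradicting Definition~\ref{def:covergraphon}.

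Here is the copy. With $V(H)=V_1\sqcup\dots\sqcup V_r$, $\ell_1\ge\dots\ge\ell_r$, the optimal colouring fixed earlier, I use the $r$ host sets
\[
F_1^{(\gamma)},\dots,F_{\ell-1}^{(\gamma)},\ X,\ Y,\ F_{\ell+1}^{(\gamma)},\dots,F_{r-1}^{(\gamma)}
\]
(for $\ell=r-1$ the last block is empty), placing $V_i$ into $F_i^{(\gamma)}$ for $i<\ell$, placing $V_\ell$ into $X$ and $V_{\ell+1}$ into $Y$, and placing $V_{\ell+2},\dots,V_r$ into $F_{\ell+1}^{(\gamma)},\dots,F_{r-1}^{(\gamma)}$. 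Crucially $F_\ell^{(\gamma)}$ is skipped — points of $S$ need not be adjacent to it, as $S\cap A_{\ell+1}^{(\gamma)}=\emptyset$ — and $F_r^{(\gamma)}\subseteq A_r^{(\gamma)}\subseteq\SUPPORT\mathfrak{c}$ is skipped, so the copy meets $\SUPPORT\mathfrak{c}$ in a null set and has $\mathfrak{c}$-weight $0$. (Placing a colour class, which has no internal edges, into $X$ or $Y$ is legitimate regardless of which edges $W$ carries inside those sets.)

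To produce the embedding I would run the sequential Fubini argument of Claim~\ref{claim:IntegralFFFFFpositive}, filling the host sets in the order $F_{r-1}^{(\gamma)},\dots,F_{\ell+1}^{(\gamma)},X,Y,F_{\ell-1}^{(\gamma)},\dots,F_1^{(\gamma)}$ and, at each step, choosing the required number of fresh points in the intersection of the current host set with the common neighbourhood of the points already chosen; it suffices that whenever a host set $P$ is filled before a host set $Q$, almost every point of $P$ is adjacent to all but an $o(1)$-fraction of $Q$. Between two of the $F$'s this is the chain property encoded in~\eqref{eq:defAi} together with $F_j^{(\gamma)}\subseteq A_j^{(\gamma)}\subseteq A_{i+1}^{(\gamma)}$ for $j\ge i+1$, exactly as in Claim~\ref{claim:IntegralFFFFFpositive}; between $X$ and $Y$ it is the defining property of $X,Y$; between $X$ or $Y$ and an $F_i^{(\gamma)}$ with $i<\ell$ it follows from $X,Y\subseteq S\subseteq A_\ell^{(\gamma)}\subseteq A_{i+1}^{(\gamma)}$ and~\eqref{eq:defAi} again. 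The one genuinely new ingredient — and the step I expect to be the main obstacle — is the completeness between the higher sets $F_j^{(\gamma)}$ ($\ell<j\le r-1$) and $S$: one must show that almost every point of $F_j^{(\gamma)}$ is adjacent to all but an $o(1)$-fraction of $S$. This is precisely the purpose of excising $O_{\ell+1}\cup\dots\cup O_{r-1}\cup\SUPPORT\mathfrak{c}$ from $S$: the neighbourhood of a point of $F_j^{(\gamma)}$ is close in $L^{1}$ to the fixed function $f_j^{(\gamma)}$, the small part of $f_j^{(\gamma)}$ is essentially confined to the layer $A_j^{(\gamma)}\setminus A_{j+1}^{(\gamma)}$ (compare the proof of Claim~\ref{claim:nuAi}), and Claims~\ref{claim:Diana1} and~\ref{claim:Diana2}, together with the fact that the weak$^{*}$-limit of the $A_{j}^{(\gamma_n^{(\cdot)})}$ swallows $O_j\cup\dots\cup O_{r-1}$, pin this layer down well enough to see that it is almost disjoint from $S$. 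Granting this, the sequential embedding yields a positive-measure family of tuples in $\SUPPORT W^{\otimes H}$ of $\mathfrak{c}$-weight $0$, contradicting Definition~\ref{def:covergraphon}; hence $S$ is independent, and as $\ell$ and $n$ were arbitrary the claim follows.
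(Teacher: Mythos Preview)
Your overall plan --- argue by contradiction and exhibit a zero-$\mathfrak{c}$-weight copy via a sequential Fubini embedding --- is the same as the paper's. The genuine divergence is in the host sets you assign to the high-index colour classes $V_{\ell+2},\ldots,V_r$: you use $F_{\ell+1}^{(\gamma)},\ldots,F_{r-1}^{(\gamma)}$, whereas the paper uses $O_{\ell+1},\ldots,O_{r-1}$. This is exactly the step you flag as ``the main obstacle'', and it is where your argument has a real gap.

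First, the justification you sketch is miswired. Claims~\ref{claim:Diana1} and~\ref{claim:Diana2} only say that $F_i^{(\gamma)}\cap O_j$ is null for $i<j$; they say nothing about the non-neighbourhood of points of $F_j^{(\gamma)}$. What one can actually extract (from Claim~\ref{claim:normFlarge}, Claim~\ref{claim:AisetminusAi}, and the proof of Claim~\ref{claim:nuAi}) is that the set $\{f_j^{(\gamma)}<\epsilon^{1/4}\}$ has measure at most $1-\delta+o(1)$, contains $A_j^{(\gamma)}\setminus A_{j+1}^{(\gamma)}$ up to $\sqrt{\epsilon}$, and the latter has measure at least $1-\delta-o(1)$; hence the small part of $f_j^{(\gamma)}$ coincides with $A_j^{(\gamma)}\setminus A_{j+1}^{(\gamma)}$ up to some error $c_n\to 0$. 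Since $A_j^{(\gamma)}\subset A_{\ell+1}^{(\gamma)}$ is disjoint from $S$ outright (no weak$^*$ limits needed), this does give $\nu(S\setminus N(y))\le c_n$ for every $y\in F_j^{(\gamma)}$.

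But this is not enough for the embedding. When you fill $X$ after the points in $F_{r-1}^{(\gamma)},\ldots,F_{\ell+1}^{(\gamma)}$, you need their common neighbourhood to meet $X$ in positive measure; the bound above only yields $\nu\bigl(X\cap N(\text{previous})\bigr)\ge\nu(X)-h\cdot c_n$. Nothing prevents $\nu(X)$ (and $\nu(Y)$) from being smaller than $h\cdot c_n$: these sets are produced from the assumption ``$S$ is not independent'' at a \emph{fixed} $n$, and their measure is not controlled from below in terms of $\gamma_n$. So the sequential step can fail. The paper's choice of $O_{\ell+1},\ldots,O_{r-1}$ avoids this entirely: by Claim~\ref{claim:hihiE} (already available by induction for $j>\ell$), $W$ is $1$ almost everywhere on $O_j\times(\Omega\setminus O_j)$, so padding with elements of the $O_j$'s is \emph{exact}, insensitive to how small $P,Q$ are. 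This is precisely why the paper routes the high-index classes through the $O_j$'s and reserves Claim~\ref{claim:Diana2} only for showing that the low-index sets $F_1^{(\gamma)},\ldots,F_{\ell-1}^{(\gamma)}$ can be taken disjoint from $\mathcal{O}$. (The paper also works with $K_r$ via the observation that $v(H)\cdot\mathfrak{c}$ is a fractional $K_r$-cover, but that is a cosmetic simplification; the essential point is the use of the $O_j$'s and Claim~\ref{claim:hihiE}.)
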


\begin{claim}
\label{claim:hihiD}For $\ell=r-1,r-2,\ldots,1$ the set $O_{\ell}$
is independent in $W$.
\end{claim}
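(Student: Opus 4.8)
The plan is to read Claim~\ref{claim:hihiD} off Claim~\ref{claim:hihiC} by the same weak$^*$-limit mechanism that is packaged in Lemma~\ref{lem:limitofindependent}. Fix $\ell\in\{1,\dots,r-1\}$ and put $Z:=O_{\ell+1}\cup\dots\cup O_{r-1}\cup\SUPPORT\mathfrak{c}$, a \emph{fixed} measurable subset of $\Omega$. For each sufficiently large $n$ set
\[
D_n:=\Bigl(A_\ell^{(\gamma_n^{(\ell)})}\setminus A_{\ell+1}^{(\gamma_n^{(\ell)})}\Bigr)\setminus Z .
\]
By Claim~\ref{claim:hihiC} each $D_n$ (with $n$ large) is independent in $W$; I would simply discard the finitely many initial indices and work with the tail, which does not affect any weak$^*$ limit.

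Next I would identify the weak$^*$ limit of the indicator functions $\mathbf{1}_{D_n}$. Recall that the sequence $(\gamma_n^{(\ell)})_n$ was extracted from $(\gamma_n^{(\ell+1)})_n$ precisely so that $\mathbf{1}_{A_\ell^{(\gamma_n^{(\ell)})}\setminus A_{\ell+1}^{(\gamma_n^{(\ell)})}}\WEAKCONV\chi_\ell$. Since $\mathbf{1}_{D_n}=\mathbf{1}_{A_\ell^{(\gamma_n^{(\ell)})}\setminus A_{\ell+1}^{(\gamma_n^{(\ell)})}}\cdot\mathbf{1}_{\Omega\setminus Z}$ and multiplication by the fixed bounded function $\mathbf{1}_{\Omega\setminus Z}$ maps $\mathcal{L}^1(\Omega)$ into itself, testing against an arbitrary $g\in\mathcal{L}^1(\Omega)$ gives $\int\mathbf{1}_{D_n}g=\int\mathbf{1}_{A_\ell^{(\gamma_n^{(\ell)})}\setminus A_{\ell+1}^{(\gamma_n^{(\ell)})}}\bigl(\mathbf{1}_{\Omega\setminus Z}\,g\bigr)\to\int\chi_\ell\,\mathbf{1}_{\Omega\setminus Z}\,g$. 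Hence $\mathbf{1}_{D_n}\WEAKCONV f$ with $f:=\chi_\ell\cdot\mathbf{1}_{\Omega\setminus Z}$, and clearly $0\le f\le1$ because $0\le\chi_\ell\le1$.

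Now Lemma~\ref{lem:limitofindependent}, applied to the tail of the sequence $(D_n)$, yields that $\SUPPORT f$ is independent in $W$. It remains to check that $\SUPPORT f$ coincides with $O_\ell$ up to a null set. By definition $\SUPPORT f=(\SUPPORT\chi_\ell)\setminus Z=O_\ell\setminus Z$; by Claim~\ref{claim:hihiB} the set $O_\ell$ is disjoint from $O_{\ell+1}\cup\dots\cup O_{r-1}$, and by Claim~\ref{claim:Oicover} the set $O_\ell\cap\SUPPORT\mathfrak{c}$ is null, so $O_\ell\setminus Z$ differs from $O_\ell$ by a null set. Consequently $(O_\ell)^2$ differs from $(O_\ell\setminus Z)^2$ by a null set, and since $W$ vanishes almost everywhere on the latter it vanishes almost everywhere on $O_\ell\times O_\ell$, i.e.\ $O_\ell$ is independent in $W$. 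I do not anticipate a real obstacle here: the only two points needing care are that multiplication by a fixed indicator preserves weak$^*$ convergence (which boils down to $\mathbf{1}_{\Omega\setminus Z}\,g\in\mathcal{L}^1$) and the identification of $\SUPPORT f$ with $O_\ell$ modulo null sets, for which Claims~\ref{claim:hihiB} and~\ref{claim:Oicover} are exactly tailored; all the substantive difficulty has already been absorbed into Claim~\ref{claim:hihiC} and the claims feeding it.
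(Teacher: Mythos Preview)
Your proposal is correct and follows essentially the same approach as the paper: apply Lemma~\ref{lem:limitofindependent} to the independent sets produced by Claim~\ref{claim:hihiC}, and use Claims~\ref{claim:hihiB} and~\ref{claim:Oicover} to identify the support of the resulting weak$^{*}$ limit with $O_\ell$ up to a null set. You have in fact spelled out more carefully than the paper why the weak$^{*}$ limit of the indicators of $D_n$ is $\chi_\ell\cdot\mathbf{1}_{\Omega\setminus Z}$ (via the observation that multiplication by a fixed bounded function is weak$^{*}$-continuous), whereas the paper simply asserts that ``$O_\ell$ can also be seen as the weak$^{*}$ limit'' of the sets $D_n$.
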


\begin{claim}
\label{claim:hihiCHI}For $\ell=r-1,r-2,\ldots,1$ we have that $\chi_{\ell}$
is constant $1$ almost everywhere on $O_{\ell}$ and constant $0$
almost everywhere on $\Omega\setminus O_{\ell}$.
\end{claim}

\begin{claim}
\label{claim:hihiE}For $\ell=r-1,r-2,\ldots,1$, $W$ is~1 almost
everywhere on $O_{\ell}\times\left(\Omega\setminus O_{\ell}\right)$.
\end{claim}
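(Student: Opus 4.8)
The plan is to obtain Claim~\ref{claim:hihiE} as a direct consequence of the minimum-degree hypothesis together with two facts already established for every index~$\ell$: the set $O_\ell$ is independent in $W$ (Claim~\ref{claim:hihiD}), and $\nu(O_\ell)\ge 1-\delta$ (Claim~\ref{claim:hihiA}). In particular no descending induction is needed here; the argument below applies to each $\ell\in\{1,\ldots,r-1\}$ separately.

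First I would fix a full-measure set $\Omega'\subseteq\Omega$ such that $\deg_W(w)\ge\delta$ for every $w\in\Omega'$ (possible by the definition of $\delta(W)$), and moreover $\int_{O_\ell}W(w,y)\,\mathrm{d}y=0$ for every $w\in\Omega'\cap O_\ell$ (possible since, by Claim~\ref{claim:hihiD}, $W$ is~$0$ almost everywhere on $O_\ell\times O_\ell$, so Fubini's theorem applies). For $w\in\Omega'\cap O_\ell$ we then get
\[
\delta\le\deg_W(w)=\int_{O_\ell}W(w,y)\,\mathrm{d}y+\int_{\Omega\setminus O_\ell}W(w,y)\,\mathrm{d}y=\int_{\Omega\setminus O_\ell}W(w,y)\,\mathrm{d}y\le\nu(\Omega\setminus O_\ell)\,,
\]
the last inequality because $0\le W\le 1$. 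On the other hand Claim~\ref{claim:hihiA} gives $\nu(\Omega\setminus O_\ell)=1-\nu(O_\ell)\le\delta$. Hence this chain collapses to equalities; in particular $\int_{\Omega\setminus O_\ell}\bigl(1-W(w,y)\bigr)\,\mathrm{d}y=0$, and since the integrand is nonnegative we conclude that $W(w,y)=1$ for almost every $y\in\Omega\setminus O_\ell$.

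As this holds for almost every $w\in O_\ell$ (namely for $w\in\Omega'\cap O_\ell$, which differs from $O_\ell$ by a null set), a final application of Fubini's theorem gives that $W=1$ almost everywhere on $O_\ell\times(\Omega\setminus O_\ell)$, which is Claim~\ref{claim:hihiE}. As a byproduct one also recovers $\nu(O_\ell)=1-\delta$, sharpening Claim~\ref{claim:hihiA}, though this is not needed. I do not expect a genuine obstacle here: the computation is short, and the only point requiring care is the measure-theoretic bookkeeping, i.e.\ intersecting the various almost-everywhere conditions into the single full-measure set $\Omega'$ and applying Fubini's theorem in the two places indicated.
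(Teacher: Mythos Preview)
Your proof is correct and is exactly the argument the paper has in mind: the paper's own proof of Claim~\ref{claim:hihiE} is the single sentence ``This follows by combining Claim~\ref{claim:hihiA}, Claim~\ref{claim:hihiD}, and the fact that the minimum degree of $W$ is at least $\delta$,'' and you have simply unpacked that sentence. Your observation that no induction is needed for this particular step is also accurate; in the paper's scheme Claim~\ref{claim:hihiE} is proved last within each fixed $\ell$, using only Claims~\ref{claim:hihiA} and~\ref{claim:hihiD} for that same $\ell$.
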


We shall now prove Claims~\ref{claim:Diana1}\textendash \ref{claim:hihiE}
by induction. That is, first we prove Claim~\ref{claim:Diana1},
Claim~\ref{claim:Diana2}, Claim~\ref{claim:hihiC}, Claim~\ref{claim:hihiD},
Claim~\ref{claim:hihiE} (in this order) for $\ell=r-1$, and then
continue proving the same batch of claims for $\ell=r-2,\ldots,1$.
Note that Claims~\ref{claim:Diana1} and~\ref{claim:Diana2} are
vacuous for $\ell=r-1$.
\begin{proof}[Proof of Claim~\ref{claim:Diana1}]
Suppose that $F_{\ell}^{\left(\gamma_{n}^{(j)}\right)}\cap O_{j}$
is not null. Claim~\ref{claim:hihiD} and \ref{claim:hihiE} (applied
to $\ell_{\mathrm{Cl\ref{claim:hihiD}}}=\ell_{\mathrm{Cl\ref{claim:hihiE}}}=j$)
assert that the one-variable functions $W(w,\cdot)$ are the same
for almost all $w\in O_{j}$. Consequently, 
\begin{equation}
\left\Vert W(w,\cdot)-f_{\ell}^{\left(\gamma_{n}^{(j)}\right)}(\cdot)\right\Vert _{1}<\epsilon^{\left(\gamma_{n}^{(j)}\right)}\;,\label{eq:dvere}
\end{equation}
 for almost all $w\in O_{j}$.

Combining~(\ref{eq:Rise}) with $A_{j}^{\left(\gamma_{n}^{(j)}\right)}\subset A_{\ell}^{\left(\gamma_{n}^{(j)}\right)}$,
we get 
\begin{equation}
\nu\left(O_{j}\setminus A_{\ell}^{\left(\gamma_{n}^{(j)}\right)}\right)=o_{n}(1)\;.\label{eq:OKH}
\end{equation}
By Claim~\ref{claim:Oicover}, $\mathfrak{c}$ is zero almost everywhere
on $O_{j}$. Therefore, (\ref{eq:OKH}) can be rewritten as $\nu\left(O_{j}\setminus B_{\ell}^{\left(\gamma_{n}^{(j)}\right)}\right)=o_{n}(1)$.
The claim follows by plugging~(\ref{eq:dvere}) into the definition
of $F_{\ell}^{\left(\gamma_{n}^{(j)}\right)}$.
\end{proof}

\begin{proof}[Proof of Claim~\ref{claim:Diana2}]
Suppose that the statement of the claim does not hold. Then there
exists an infinite sequence of numbers $n$ for which $F_{\ell}^{\left(\gamma_{n}^{(j)}\right)}\cap O_{j}$
is not null. Let $n$ be such that $F_{\ell}^{\left(\gamma_{n}^{(j)}\right)}\cap O_{j}$
is not null, and suppose that it is sufficiently large. We then have
that 
\[
\nu\left(O_{j}\cap F_{\ell}^{\left(\gamma_{n}^{(j)}\right)}\cap A_{j}^{\left(\gamma_{n}^{(j)}\right)}\right)\ge\nu\left(O_{j}\right)-\nu\left(O_{j}\setminus F_{\ell}^{\left(\gamma_{n}^{(j)}\right)}\right)-\nu\left(O_{j}\setminus A_{j}^{\left(\gamma_{n}^{(j)}\right)}\right)\;.
\]
The first term is at least $1-\delta$ by Claim~\ref{claim:hihiA}.
The second term is $o_{n}(1)$ by Claim~\ref{claim:Diana1}. The
third term is $o_{n}(1)$ by~(\ref{eq:Rise}). We conclude that
\begin{equation}
\nu\left(O_{j}\cap F_{\ell}^{\left(\gamma_{n}^{(j)}\right)}\cap A_{j}^{\left(\gamma_{n}^{(j)}\right)}\right)>\frac{1}{2}(1-\delta)\;.\label{eq:Topeni}
\end{equation}
Consider an arbitrary $w\in O_{j}\cap F_{\ell}^{\left(\gamma_{n}^{(j)}\right)}\cap A_{j}^{\left(\gamma_{n}^{(j)}\right)}$.
As $w\in A_{j}^{(\gamma_{n}^{(j)})}$, the definition from~(\ref{eq:defAi})
gives, 
\[
\nu\left(N(w)\cap F_{\ell}^{\left(\gamma_{n}^{(j)}\right)}\right)\ge\left(1-\sqrt[4]{\epsilon^{\left(\gamma_{n}^{(j)}\right)}}\right)\nu\left(F_{\ell}^{\left(\gamma_{n}^{(j)}\right)}\right)\;.
\]
In particular, 
\[
\nu\left(N(w)\cap O_{j}\cap F_{\ell}^{\left(\gamma_{n}^{(j)}\right)}\right)\ge\nu\left(O_{j}\cap F_{\ell}^{\left(\gamma_{n}^{(j)}\right)}\right)-\sqrt[4]{\epsilon^{\left(\gamma_{n}^{(j)}\right)}}\nu\left(F_{\ell}^{\left(\gamma_{n}^{(j)}\right)}\right)\overset{\eqref{eq:Topeni}}{\ge}\frac{1}{4}(1-\delta)\;.
\]
Integrating $w$ over the set $O_{j}\cap F_{\ell}^{\left(\gamma_{n}^{(j)}\right)}\cap A_{j}^{\left(\gamma_{n}^{(j)}\right)}$
of positive measure (by~(\ref{eq:Topeni})), and get that 
\[
\int_{w\in O_{j}\cap F_{\ell}^{\left(\gamma_{n}^{(j)}\right)}\cap A_{j}^{\left(\gamma_{n}^{(j)}\right)}}\int_{y\in O_{j}\cap F_{\ell}^{\left(\gamma_{n}^{(j)}\right)}}W(w,y)>0\;.
\]
Hence $O_{j}\cap F_{\ell}^{\left(\gamma_{n}^{(j)}\right)}$ is not
an independent set, a contradiction to Claim~\ref{claim:hihiD}.
\end{proof}

\begin{proof}[Proof of Claim~\ref{claim:hihiC}]
Suppose that the statement of the claim fails for~$\ell$. Then,
we can find two sets $P,Q\subset\left(A_{\ell}^{\left(\gamma_{n}^{(\ell)}\right)}\setminus A_{\ell+1}^{\left(\gamma_{n}^{(\ell)}\right)}\right)\setminus\left(O_{\ell+1}\cup O_{\ell+2}\cup\ldots\cup O_{r-1}\cup\SUPPORT\mathfrak{c}\right)$
such that $\int_{P\times Q}W>0$. 

Consider an $r$-tuple $\mathbf{w}\in F_{1}^{\left(\gamma_{n}^{(\ell)}\right)}\times F_{2}^{\left(\gamma_{n}^{(\ell)}\right)}\times\ldots\times F_{\ell-1}^{\left(\gamma_{n}^{(\ell)}\right)}\times P\times Q\times O_{\ell+1}\times\ldots\times O_{r-1}$.
For $j=1,2,\ldots,\ell-1$, $\mathbf{w}_{j}\in F_{j}^{\left(\gamma_{n}^{(\ell)}\right)}\subset B_{j}^{\left(\gamma_{n}^{(\ell)}\right)}\subset\mathfrak{c}^{-1}(0)$,
where the last inclusion uses (in addition to the definition of the
set $B_{j}^{\left(\gamma_{n}^{(\ell)}\right)}$) Claim~\ref{claim:EssentialRange}.
For $j=\ell,\ell+1$, we have $\mathfrak{c}(\mathbf{w}_{j})=0$ since
$P$ and $Q$ are disjoint from $\SUPPORT\mathfrak{c}$. For $j=\ell+2,\ldots,r$,
we have $\mathfrak{c}(\mathbf{w}_{j})=0$ by Claim~\ref{claim:Oicover},
except possibly a null set of exceptional values of $\mathbf{w}_{j}$.
We conclude that $\sum_{j}\mathfrak{c}\left(\mathbf{w}_{j}\right)=0$,
except possibly a null set exceptional vectors $\mathbf{w}$. In particular,
for almost every $\mathbf{w}\in F_{1}^{\left(\gamma_{n}^{(\ell)}\right)}\times F_{2}^{\left(\gamma_{n}^{(\ell)}\right)}\times\ldots\times F_{\ell-1}^{\left(\gamma_{n}^{(\ell)}\right)}\times P\times Q\times O_{\ell+1}\times\ldots\times O_{r-1}$,
\begin{equation}
v(H)\cdot\sum_{j}\mathfrak{c}\left(\mathbf{w}_{j}\right)=0\;.\label{eq:tomtp}
\end{equation}
As the chromatic number of $H$ is $r$ and each color-class of $H$
has size at most~$v(H)$, we get that the function $v(H)\cdot\mathfrak{c}$
is a fractional $K_{r}$-cover. Combined with~(\ref{eq:tomtp}),
we get that $W^{\otimes K_{r}}(\mathbf{w})=0$ (for almost every $\mathbf{w}$).
Therefore,
\begin{equation}
\int_{F_{1}^{\left(\gamma_{n}^{(\ell)}\right)}}\int_{F_{2}^{\left(\gamma_{n}^{(\ell)}\right)}}\ldots\int_{F_{\ell-1}^{\left(\gamma_{n}^{(\ell)}\right)}}\int_{P}\int_{Q}\int_{O_{\ell+1}}\ldots\int_{O_{r-1}}W^{\otimes K_{r}}=0\;.\label{eq:IntegralNula}
\end{equation}

\medskip{}

We abbreviate $\mathcal{O}=O_{\ell+1}\cup\ldots\cup O_{r-1}$. Let
us now take an arbitrary $w\in A_{\ell}^{\left(\gamma_{n}^{(\ell)}\right)}$.
Recall that $A_{\ell}^{\left(\gamma_{n}^{(\ell)}\right)}\subset A_{\ell-1}^{\left(\gamma_{n}^{(\ell)}\right)}\subset\ldots\subset A_{2}^{\left(\gamma_{n}^{(\ell)}\right)}$.
Therefore, (\ref{eq:defAi}) tells us that 
\[
\nu\left(F_{j}^{\left(\gamma_{n}^{(\ell)}\right)}\cap N(w)\right)\ge\left(1-\sqrt[4]{\epsilon^{\left(\gamma_{n}^{(\ell)}\right)}}\right)\nu\left(F_{j}^{\left(\gamma_{n}^{(\ell)}\right)}\right)
\]
for each $j=\ell-1,\ell-2,\ldots,1$. Similarly, given an arbitrary
$u_{t}\in F_{t}^{\left(\gamma_{n}^{(\ell)}\right)}$ ($t=2,\ldots,\ell-1$),
we make use of the fact that $F_{t}^{\left(\gamma_{n}^{(\ell)}\right)}\subset A_{t}^{\left(\gamma_{n}^{(\ell)}\right)}$
and deduce that 
\[
\nu\left(F_{j}^{\left(\gamma_{n}^{(\ell)}\right)}\cap N(u_{t})\right)\ge\left(1-\sqrt[4]{\epsilon^{\left(\gamma_{n}^{(\ell)}\right)}}\right)\nu\left(F_{j}^{\left(\gamma_{n}^{(\ell)}\right)}\right)
\]
 for each $j=t-1,\ell-2,\ldots,1$. Claim~\ref{claim:Diana2} tells
us that 
\[
\nu\left(F_{j}^{\left(\gamma_{n}^{(\ell)}\right)}\cap N(u_{t})\setminus\mathcal{O}\right)\ge\left(1-\sqrt[4]{\epsilon^{\left(\gamma_{n}^{(\ell)}\right)}}\right)\nu\left(F_{j}^{\left(\gamma_{n}^{(\ell)}\right)}\right)>(1-\frac{1}{2r})\nu\left(F_{j}^{\left(\gamma_{n}^{(\ell)}\right)}\right)\;.
\]
That is, starting from any $w\in A_{\ell}^{\left(\gamma_{n}^{(\ell)}\right)}$,
we can plant a positive $\nu^{\ell}$-measure of $K_{\ell}$-cliques
$wu_{\ell-1}u_{\ell-2}\ldots u_{1}$ as above. The situation is illustrated
on Figure~\ref{fig:ForcingKr}.
\begin{figure}
\includegraphics[scale=0.9]{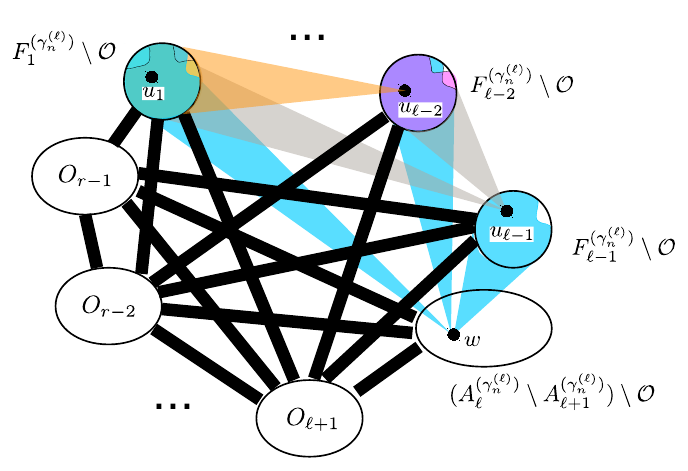}\caption{The black complete bipartite graphs are forced by Claim~\ref{claim:hihiE}.
The almost complete connections depicted with colours follow from
the fact that the respective vertices lie in the sets $A_{j}^{\left(\gamma_{n}^{(\ell)}\right)}$
($j=\ell,\ell-1,\ldots,2)$, and thus are well-connected to the sets
$F_{t}^{\left(\gamma_{n}^{(\ell)}\right)}$ (for each $t\in[j-1]$).\label{fig:ForcingKr}}
\end{figure}
 We can refine this construction to find a positive $\nu^{r}$-measure
of $K_{r}$-cliques as follows. First we take $w_{P}\in P$ and $w_{Q}\in Q$
such that $W(w_{P},w_{Q})>0$ (we have a $\nu^{2}$-positive measure
of such choices). Then we sequentially find vertices 
\[
u_{\ell-1}\in F_{\ell-1}^{\left(\gamma_{n}^{(\ell)}\right)}\setminus\mathcal{O},\ldots,u_{1}\in F_{1}^{\left(\gamma_{n}^{(\ell)}\right)}\setminus\mathcal{O}
\]
that are neighbors of $w_{P}$, $w_{Q}$ and the vertices fixed in
the previous rounds. Having chosen the $K_{\ell+1}$-clique $w_{P}w_{Q}u_{\ell-1}u_{\ell-2}\ldots u_{1}$,
Claim~\ref{claim:hihiB} tells us that $O_{\ell+1},O_{\ell+2},\ldots,O_{r-1}$
are disjoint, then together with Claim~\ref{claim:hihiE} we know
that padding arbitrary elements from $O_{\ell+1},O_{\ell+2},\ldots,O_{r-1}$
yields a copy of $K_{r}$. Since all these sets have positive measure,
we get a contradiction to~(\ref{eq:IntegralNula}).
\end{proof}

\begin{proof}[Proof of Claim~\ref{claim:hihiD}]
Recall that $O_{\ell}$ arises from the weak{*} limit of the sets
$A_{\ell}^{\left(\gamma_{n}^{(\ell)}\right)}\setminus A_{\ell+1}^{\left(\gamma_{n}^{(\ell)}\right)}$.
Claims~\ref{claim:hihiB} and~\ref{claim:Oicover} tell us that
$O_{\ell}$ can also be seen as the weak$^{*}$ limit of the sets
\[
\left(A_{\ell}^{\left(\gamma_{n}^{(\ell)}\right)}\setminus A_{\ell+1}^{\left(\gamma_{n}^{(\ell)}\right)}\right)\setminus\left(O_{\ell+1}\cup O_{\ell+2}\cup\ldots\cup O_{r-1}\cup\SUPPORT\mathfrak{c}\right)\;.
\]
Thus the claim follows by combining Claim~\ref{claim:hihiC} and
Lemma~\ref{lem:limitofindependent}.
\end{proof}

\begin{proof}[Proof of Claim~\ref{claim:hihiCHI}]
The fact that $\left(\chi_{\ell}\right)_{\Omega\setminus O_{\ell}}=0$
follows simply because $O_{\ell}$ is the indicator of $\SUPPORT\chi_{\ell}$.
Suppose now for contradiction that $\left(\chi_{\ell}\right)_{O_{\ell}}$
is less than~1 on a set of positive measure. Combining this with~(\ref{eq:IntChi})
gives that $\nu(O_{\ell})>1-\delta$.\footnote{Note this is stronger than Claim~\ref{claim:hihiA} because the inequality
is strict.} This, however cannot be the case since $\delta(W)\ge\delta$ and
$O_{\ell}$ is an independent set by Claim~\ref{claim:hihiD}.
\end{proof}

\begin{proof}[Proof of Claim~\ref{claim:hihiE}]
This follows by combining Claim~\ref{claim:hihiA}, Claim~\ref{claim:hihiD},
and the fact that the minimum degree of $W$ is at least $\delta$.
\end{proof}

\section{Comparing the proofs\label{sec:ComparingProofs}}

If not counting preparations related to the Regularity method, then
the heart of Komlós's proof of Theorem~\ref{thm:KomlosOriginal}
in~\cite{Komlos2000} is a less than three pages long calculation.
In comparison, the corresponding part of our proof in Section~\ref{subsec:nonstabilityKomlos}
has circa four pages. So, our proof is not shorter, but it is conceptually
much simpler. Indeed, Komlós's proof proceeds by an ingenious iterative
regularization of the host graph, a technique which was novel at that
time and which is rare even today (apart from proofs of variants of
Komlós's Theorem, such as~\cite{MR3471843,Grosu2012}).

Our graphon formalism, on the other hand, allows us to proceed with
the most pedestrian thinkable proof strategy. That is, to show using
relatively straightforward calculations that no small fractional $H$-covers
exist.

\medskip{}

Let us note that our proof can be de-graphonized as follows. Consider
a graph $G$ satisfying the minimum-degree condition as in~(\ref{eq:KomlosOrigMinDeg}).
Apply the min-degree form of the Regularity lemma, thus arriving to
a cluster graph~$R$. Now, the calculations from Section~\ref{subsec:nonstabilityKomlos}
can be used \emph{mutatis mutandis} to prove that~$R$ contains no
small fractional $H$-cover. Thus, by LP duality, the cluster graph~$R$
contains a large fractional $H$-tiling. This fractional $H$-tiling
in $R$ can be pulled back to a proportionally sized integral $H$-tiling
in $G$ by Blow-up lemma type techniques. The advantage of this approach
is that it allows the above mentioned argument ``take a vertex which
has the smallest value of $\mathfrak{c}$ and consider its neighborhood''
(on the level of the cluster graph), but this is compensated by the
usual technical difficulties like irregular or low density pairs.

\section{Further possible applications\label{sec:FurtherApplications}}

While Komlós's Theorem provides a complete answer (at least asymptotically)
for lower-bounding $\TIL(H,G)$ in terms of the minimum degree of
$G$, the average degree version of the problem is much less understood.
Apart from the Erd\H{o}s\textendash Gallai Theorem ($H=K_{2}$) mentioned
in Section~\ref{sec:Intro}, the only other known graphs for which
the asymptotic $F$-tiling thresholds have been determined are all
bipartite graphs,~\cite{Grosu2012} and $K_{3}$, \cite{ABHP:DensityCorHaj}.
The current graphon formalism may be of help in finding further density
thresholds.

After this paper was made public, Piguet and Saumell~\cite{EurocombMedian,PigSauMedian}
used a similar approach (with the de-graphonized formalism, as described
in Section~\ref{sec:ComparingProofs}) to obtain a strengthening
of Komlós's Theorem. In that strengthening, the lower-bound~(\ref{eq:KomlosOrigMinDeg})
is not required for all vertices but rather only for a certain (and
optimal) proportion (which depends on $x$ and the graph $H$) of
them.

Let us remark that in~\cite{DoHl:Polytons}, the authors provide
a graphon proof of the Erd\H{o}s\textendash Gallai Theorem. The key
tool to this end is to establish the half-integrality property of
the fractional vertex cover ``polyt\emph{on}''. These objects are
defined in analogy to fractional vertex cover potypes of graphs, but
for graphons (hence the ``-\emph{on}'' ending). This half-integrality
property is a direct counterpart to the well-known statement about
fractional vertex cover polytopes of finite graphs.

\section{Acknowledgments}

JH would like thank Dan Král and András Máthé for useful discussions
that preceded this project. He would also like to thank Martin Doležal
for the discussions they have had regarding functional analysis. We
thank the referees for their helpful comments.

Part of this paper was written while JH was participating in the program
\emph{Measured group theory} at The Erwin Schrödinger International
Institute for Mathematics and Physics.\medskip{}

The contents of this publication reflects only the authors' views
and not necessarily the views of the European Commission of the European
Union. This publication reflects only its authors' view; the European
Research Council Executive Agency is not responsible for any use that
may be made of the information it contains.

\bibliographystyle{plain}
\bibliography{../bibl}

\end{document}